\newtheorem{theorem}{Theorem}[section]
\newtheorem{example}[theorem]{Example}
\newtheorem{definition}[theorem]{Definition}
\newtheorem{corollary}[theorem]{Corollary}
\newtheorem{conjecture}[theorem]{Conjecture}
\newtheorem{lemma}[theorem]{Lemma}
\newtheorem{consequence}[theorem]{Consequence}
\numberwithin{equation}{section}
\begin{document}

\title{Combinatorial Random Knots}

\author{Andrew Ducharme, Emily Peters}
\thanks{Supported in part by NSF Grant DMS-1501116}
\date{\today}

\begin{abstract}
We explore free knot diagrams, which are projections of knots into the plane which don't record over/under data at crossings.  We  consider the combinatorial question of which free knot diagrams give which  knots and with what probability.  Every free knot diagram is proven to produce trefoil knots, and certain simple families of free knots are completely worked out.  We make some conjectures (supported by computer-generated data) about bounds on the probability of a knot arising from a fixed free diagram being the unknot, or being the trefoil.
\end{abstract}

\maketitle

\section{Introduction}

Knots and links have been objects of mathematical interest for centuries.  In 1833, Gauss found the linking integral for two loops.  Knots and links were some of the first objects to be studied topologically.  They remain a cornerstone of the field of topology, and are useful in many settings beyond their inate one/three dimensional-ness.  

The combinatorial study of knots dates back to Reidemeister, who described a set of three moves which generate all equivalences of knot diagrams.  The major advantage of these moves is not in their direct application to questions of knot diagram equality; it is in knot invariants.  Showing that a quantity which can be computed from a knot diagram is invariant under Reidemeister moves is an easy (sometimes) way to show it is actually a property of knots.  

The Jones polynomial is one of the most famous invariants of knots.  It is calculated by `resolving' each crossing in a knot so that the strings no longer cross.  As there are two ways to do this for each crossing, $2^n$ possible crossing-free `states' (diagrams consisting only of disjoint loops) result.  The Jones polynomial is a weighted sum of such states (actually, of a polynomial quantity assigned to each state based on the number of loops it contains).  

In this article, we investigate a combinatorial aspect of knot theory coming from considering knot diagrams without crossing data, and the knots that result from random assignment of crossing data. 
Suppose we have a ``free knot diagram," like so:
\begin{center}
\begin{tikzpicture}
	\coordinate (A) at (0:1cm) {};
	\coordinate (B) at (120:1cm) {};
	\coordinate (C) at (240:1cm) {};
	\coordinate (D) at (0:2cm) {};
	\coordinate (E) at (120:2cm) {};
	\coordinate (F) at (240:2cm) {};
	
	\draw (A) .. controls +(90:.8cm) and +(30:.8cm) .. (E);
	\draw (C) .. controls +(-30:.8cm) and +(270:.8cm) .. (D)  .. controls +(90:.8cm) and +(30:.8cm) .. (B);
	\draw (B) .. controls +(210:.8cm) and +(150:.8cm) .. (F)  .. controls +(-30:.8cm) and +(-90:.8cm) .. (A);
	\draw (E)  .. controls +(210:.8cm) and +(150:.8cm) .. (C);
\end{tikzpicture}
\end{center}
If we randomly assign over/under data to each crossing in a knot diagram shape, how many unknots will we get?  For a generic free knot diagram, will any assignment of crossings produce trefoils, figure eight knots, the knot $5_2$, etc?  What is the average crossing number of all knots produced by such assignments to a given free knot diagram?

We began this project by taking an experimental approach:  For a set of free knot diagrams with a smallish number of crossings, we computed all the knots which result from assignments of crossing data.  We did this with the help of a Mathematica program and the Jones polynomial:  though not a perfect invariant, the Jones polynomial can tell apart all prime knots with $9$ or fewer crossings. We experience some computational savings from the fact that  knots coming from the same free knot diagram all resolve into the same states -- it is only the weights of the various states that change, depending on the crossing data.  

After looking at the data, we made some observations, and in this article prove many of them.  As a starting point, we have the following:  

\begin{theorem}
A random knot coming from an $n$-crossing knot diagram has probability at least $\frac{2n}{2^n}$ of being the unknot.
\end{theorem}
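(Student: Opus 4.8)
The plan is to exhibit explicitly at least $2n$ distinct crossing assignments that yield the unknot, and then divide by the $2^n$ equally likely assignments. The tool is the classical fact that a \emph{descending} diagram is the unknot: fix a basepoint on the curve (not at a crossing) and a direction of travel, and decree that at every crossing the strand traversed first passes over the strand traversed second; the resulting diagram can be isotoped to a round circle by combing it down from the basepoint, so it is unknotted. The crossing-wise opposite assignment (first strand under) gives an \emph{ascending} diagram, which is likewise the unknot.

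First I would set up the bookkeeping. The free diagram is a $4$-valent plane graph with $n$ vertices whose underlying curve is a single circle; traversing it we meet the $n$ crossings twice each, at $2n$ points that cut the circle into $2n$ arcs $a_1, \dots, a_{2n}$. Placing the basepoint in arc $a_i$ (with travel direction fixed once and for all) determines a descending assignment $D_i$ and its crossing-wise opposite ascending assignment $\overline{D_i}$, both unknots. This already singles out $2n$ candidate basepoints; the real content is to show the assignments they produce are genuinely different.

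Next I would analyze how $D_i$ depends on $i$. Sliding the basepoint forward past a single crossing-visit reverses which of that crossing's two strands is seen first, so $D_i$ and $D_{i+1}$ differ in exactly one crossing. Encoding the diagram by its Gauss chord diagram ($2n$ points on a circle, one chord per crossing joining its two visits), $D_i$ records for each chord which of its two sides the basepoint lies in; as $i$ runs around, each chord's coordinate is toggled exactly twice, once at each endpoint. The claim to prove is that $D_1, \dots, D_{2n}$ together with their opposites $\overline{D_1}, \dots, \overline{D_{2n}}$ contain at least $2n$ distinct assignments. Since an assignment never equals its own opposite (the opposite flips all $n \ge 1$ crossings), and each opposite-pair is a two-element class, this is the same as showing the $D_i$ occupy at least $n$ classes after identifying every assignment with its opposite.

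The hard part is exactly this last count, because the raw descending assignments can collapse: for a diagram all of whose crossings are nugatory kinks the $D_i$ take only $n+1$ values, far fewer than $2n$. I expect the contiguity of the chord intervals (each crossing is seen-first on a connected run of arcs, and the $2n$ chord-endpoints are distinct) to force at least $n$ opposite-classes in general, with the fully nested and the all-kink configurations showing the bound $2n$ is sharp. In the degenerate, few-class cases the missing unknots are recovered in two ways: the ascending (opposite) assignments are distinct from the descending ones, and every nugatory crossing may be switched freely without changing the knot type, so such diagrams in fact realize the unknot on far more than $2n$ assignments. Assembling these pieces — an interval-counting lemma for reduced diagrams, together with the nugatory-freedom observation for the rest — yields at least $2n$ unknot assignments, hence probability at least $\tfrac{2n}{2^n}$.
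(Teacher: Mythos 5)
Your overall strategy is the same as the paper's: descending (and ascending) diagrams are unknots, you get one candidate assignment from a basepoint on each of the $2n$ arcs, and the content of the theorem is a distinctness count after accounting for collisions. Indeed your family $\{D_i, \overline{D_i}\}$ is exactly the paper's $4n$ climb/downramp pairs, since the descending assignment for the reversed direction of travel is the crossing-wise opposite of the ascending one. But your proof stops precisely at the step that carries all the weight. The reduction to ``the $D_i$ occupy at least $n$ opposite-classes'' is correct, and your all-kinks example rightly shows that naive injectivity of $i \mapsto D_i$ fails (only $n+1$ values there), so some collision-control device is genuinely needed. You then write ``I expect the contiguity of the chord intervals \dots to force at least $n$ opposite-classes'' --- that is a conjecture, not an argument. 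Abstractly you are claiming that a closed walk of length $2n$ on the $n$-cube which toggles each coordinate exactly twice must meet at least $n$ antipodal classes; nothing in your write-up proves this, and the contiguity structure you invoke is never used. The fallback via ``nugatory-freedom'' is likewise not carried out: you neither delimit which diagrams fall into the degenerate case nor count the unknots recovered there, so the final ``assembling these pieces'' sentence assembles two unproved lemmas.

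For comparison, the paper closes this gap with a different, and simpler, device: the \emph{top track}. To each climb/direction pair it associates the maximal strand containing the climb that passes only over at crossings (stopping just before any crossing is traversed a second time), and it argues that a constructed unknot diagram can share its top track with at most one other constructed diagram --- the one traversing the same top track in the opposite order. This makes the map from the $4n$ pairs to assignments at most $2$-to-$1$, giving at least $4n/2 = 2n$ distinct unknot diagrams directly, with no case split into reduced versus unreduced diagrams and no chord-diagram combinatorics. If you want to salvage your route, the missing ingredient is a proof of your interval-counting lemma (or a substitute invariant, like the top track, that separates the constructed assignments); as it stands, the proposal establishes only that there are \emph{some} unknot assignments, not that there are $2n$ of them.
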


This theorem sets a lower bound on the possible amount of unknots, which is intriguing, but does not tell us much information about the vast majority of knots. For example, once we start looking at relatively uncomplicated diagrams with six crossings, this theorem only describes one third of all produced unknots. We want to establish stronger bounds.

Another observation we quickly made was the following:

\begin{theorem}
Let $K$ be a free knot diagram which is non-trivial.  Then $K$ has some assignment of crossings that produces a trefoil diagram.  
\end{theorem}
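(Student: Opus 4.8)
The plan is to prove the statement in two phases: first reduce to the case of a \emph{reduced} free diagram (one admitting no Reidemeister~I or~II simplification at the level of shadows), and then exhibit a trefoil-producing assignment on such a diagram. For the reduction I would induct on the number of crossings $n$. If $K$ has a kink (a crossing whose strand bounds a monogon) or an empty bigon (two strands meeting at two crossings with a disk between them), the corresponding flat Reidemeister~I or~II move yields a free diagram $K'$ with fewer crossings and the same flat-knot type, so $K'$ is still non-trivial; by the inductive hypothesis $K'$ has an assignment producing a trefoil, and I would pull this assignment back to $K$ by choosing signs at the removed crossing(s) so that the genuine signed Reidemeister~I/II move applies, leaving the knot type unchanged. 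An analogous remark handles a nugatory (separating) crossing: if $K$ is a connected sum $K_1 \# K_2$ of shadows, realize a trefoil in a non-trivial factor and an unknot in the other. This lets me assume $K$ is reduced and prime.

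For the reduced prime case, the starting observation is that non-triviality forces a \emph{clasp}: two crossings $P,Q$ whose chords are interleaved in the Gauss code, reading $\ldots P \ldots Q \ldots P \ldots Q \ldots$. Indeed, a diagram whose chords are pairwise non-interleaved has a chord spanning an empty arc, hence is reducible by flat Reidemeister~I moves down to the crossingless diagram, so it is trivial. The clasp is the seed of the trefoil. The idea is then to quiet every crossing not needed for the trefoil: traversing the knot from a basepoint and declaring each crossing to pass \emph{over} on its first visit produces a descending diagram, which is always the unknot, so crossings assigned in this descending fashion can be removed by genuine Reidemeister moves and contribute nothing to the final knot type. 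I would assign the clasp (together with a carefully chosen third crossing or an adjacent second clasp) to be ``engaged'' and all remaining crossings descending, then identify the resulting knot using an invariant computable directly from the assignment --- for instance the Jones polynomial via the state sum already used in the paper, or the determinant and signature --- to certify that it is the trefoil rather than the unknot or some other knot.

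The main obstacle, and the technical heart of the argument, is guaranteeing that the engaged crossings combine to give the trefoil and nothing else. A clasp alone is only two crossings and always yields the unknot, so two interleaved chords never suffice; and one cannot simply hunt for three pairwise-interleaved chords, because non-trivial reduced shadows need not contain any. The figure-eight shadow is the cautionary example: its interlacement graph is a four-cycle with no triangle, yet it does produce a trefoil, obtained by giving its two two-crossing twist regions opposite signs (in continued-fraction terms, passing from $[2,2]=5/2$, the figure-eight, to $[2,-2]=3/2$, the trefoil). Thus the trefoil can emerge only after the signed diagram simplifies, so the realization must be argued at the level of the knot, not the shadow. I would therefore organize the reduced case around twist-region/tangle structure: locate either three mutually interleaved crossings (giving the trefoil shadow outright) or a pair of twist regions that can be assigned opposite signs to collapse to a $3/2$ tangle, and verify in each configuration, via the chosen invariant, that exactly the trefoil results. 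Making this case analysis exhaustive --- showing every reduced prime non-trivial shadow falls into one of these trefoil-yielding configurations --- is the step I expect to be hardest and to demand the most care.
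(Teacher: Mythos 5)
There is a genuine gap here --- in fact two, one structural and one admitted. The structural one is in your Phase 1: flat-knot type is the wrong invariant, because \emph{every} classical knot shadow is trivial as a flat knot (flat RI--RIII can undo any generic immersed curve in the plane or sphere), so ``same flat-knot type, hence $K'$ still non-trivial'' preserves nothing. Concretely, the figure-eight shadow contains empty bigons; removing one by flat RII leaves the two-crossing clasp shadow, all of whose assignments are unknots --- so $K'$ is trivial while $K$ still produces trefoils, and your inductive hypothesis dies on exactly the interesting examples. (The pull-back direction is fine: a trefoil-producing assignment of $K'$ extends to $K$ by giving the deleted pair cancelling flavours; but that direction cannot drive the induction.) Moreover, the two phases do not mesh: a ``reduced'' diagram in your sense has no bigons, hence no twist regions of length $\geq 2$, so your Phase-2 dichotomy (an interlacement triangle, or two twist regions given opposite signs collapsing to a $3/2$ tangle) has no purchase there, and the figure-eight --- your own motivating example --- is handled by neither phase. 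On top of this sit the two gaps you acknowledge: the exhaustiveness of the configuration analysis, which is the actual content of the theorem, is left open; and the ``quieting'' step is unproved --- declaring all non-engaged crossings descending does not by itself let you remove them while keeping the engaged crossings intact, since descending-implies-unknot is a statement about fully descending diagrams.

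It is worth seeing how the paper's proof fills precisely these holes with a different inductive quantity: the length of a \emph{loop} (a segment from a crossing back to itself with no other self-intersections), which is always odd, and which minimality (no monogons) forces to be $\geq 3$. The inductive step shortens a loop of length $>3$ by two: pick a strand transversal to the loop's disk, assign every free crossing it meets so that it passes over everything in the disk, and isotope it out by RII together with the mixed RIII move (which applies whatever the flavour of the centre crossing) --- this is a provably removable version of your quieting. The base case, a loop of length exactly three, is finished by hand: the exterior is made descending (hence unknotted), two local pictures cover how the loop's transversal strands reconnect outside, and assigning the three loop crossings alternately over/under/over exhibits the trefoil. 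Note that the figure-eight shadow has a loop of length three, so it is handled directly in the base case, with the simplification you worried about (``the trefoil emerges only after the signed diagram simplifies'') carried out by the mixed-diagram Reidemeister moves rather than certified post hoc by an invariant. If you want to salvage your outline, the loop is the configuration your case analysis was hunting for: it always exists, and it replaces both the interlacement triangle and the twist-region dichotomy in one stroke.
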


We examine four particular categories of free knot diagrams based on the closure of particular tangles, and completely describe the knots that result from two while also partially deducing the results of the two further free diagrams. Computational evidence suggests that some of these knots realize upper or lower bounds on unknots, trefoils, or figure eight knots.

\begin{conjecture} \label{trefoil conjecture}
The free foil diagram with n crossings, the free closure of the n tangle pictured below, realize the upper bound on trefoils for all sufficiently complicated free diagrams with n and $n+1$ crossings.
\end{conjecture}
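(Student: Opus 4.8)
The plan is to separate the conjecture into an exact evaluation for the foil and an extremal upper bound over all competitors, the latter being the real content. For the evaluation: the free $n$-foil is the free diagram of the standard $(2,n)$-torus projection, a single twist region of $n$ crossings closed up, so every assignment of over/under data gives a two-strand braid word $\sigma_1^{\epsilon_1}\cdots\sigma_1^{\epsilon_n}$ with $\epsilon_i\in\{\pm1\}$, whose closure is the torus link $T(2,k)$ where $k=\sum_i\epsilon_i$. Since this closure depends only on $k$ and is the trefoil exactly when $k=\pm3$, the number of trefoil-producing assignments is the number of length-$n$ sign sequences summing to $\pm3$, namely $2\binom{n}{(n-3)/2}$ (for odd $n\ge3$), giving trefoil probability $2\binom{n}{(n-3)/2}/2^{n}$. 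I would record this as the conjectured extremal value. The same computation also exposes why $n$ and $n+1$ must be handled together: for even $n$ every output $T(2,k)$ has $k$ even, hence is a two-component link and never a trefoil, so the genuine trefoil-champion for an even crossing number cannot be the even foil. One must instead identify the correct even-crossing realizer (conjecturally a minimal stabilization of the odd foil, which preserves the probability above) and show it ties the odd foil while no genuinely complicated $(n+1)$-crossing diagram beats it.

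For the upper bound — that no sufficiently complicated free diagram $D$ with $n$ (or $n+1$) crossings produces trefoils with higher probability — I see two complementary lines of attack. The first exploits the structure already noted in the paper: all $2^n$ assignments of a fixed free diagram resolve into the \emph{same} Kauffman states, so an assignment merely reweights one fixed state sum, and the map $\epsilon\mapsto V_{D_\epsilon}$ sending an assignment to its Jones polynomial is a reweighting of a single skein computation. The trefoil-producing assignments are exactly the fibers of this map over the two (mirror) trefoil Jones polynomials after writhe normalization, so the goal becomes a bound on the size of these fibers. The second line is a direct combinatorial compression: build an injection from the trefoil assignments of an arbitrary complicated $D$ into those of the foil, formalizing the heuristic that a single twist region is the most efficient structure for collapsing $n$ crossings down to the genus-one, crossing-number-three trefoil.

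The main obstacle is controlling, for an arbitrary $D$, how many distinct assignments collapse to the trefoil. Neither proposed tool is automatic: the Jones polynomial is not a faithful invariant, so a fiber of $\epsilon\mapsto V_{D_\epsilon}$ could in principle be large for reasons unrelated to the foil's twist structure, and the compression injection has no evident canonical definition because Reidemeister reductions act globally rather than crossing-by-crossing. I expect progress to require leveraging the rigidity of the trefoil at its minimal crossing number — it has an essentially unique reduced alternating diagram up to mirror image and flypes — so that any trefoil-producing assignment of $D$ must, after reduction, exhibit a twist region of algebraic length three, followed by a flype/Reidemeister analysis bounding how many assignments can feed into such a region. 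I would anchor the argument with base cases and fix the precise meaning of ``sufficiently complicated'' (ruling out nugatory crossings and connect-sum factors, which should be peeled off and treated separately) using the Mathematica enumeration already described, and only then attempt the general inductive or injective step.
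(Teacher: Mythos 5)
The statement you set out to prove is, in the paper itself, exactly that---a conjecture (restated more precisely later as Conjecture \ref{TrefoilUpperBound}: among all \emph{minimal prime} free diagrams with $n$ and $n+1$ crossings, $n$ odd, the $n$-foil has the most trefoil descendants). The paper offers no proof, only a heuristic paragraph (any other diagram pays a ``cost'' to eliminate the crossings differentiating it from a foil) plus enumeration data through nine crossings. So the honest comparison is: your evaluation half is correct and matches what the paper actually proves, while your extremal half is a research plan, not a proof---leaving the statement as open for you as it is for the authors. Concretely, your count of $2\binom{n}{(n-3)/2}$ trefoil assignments for the odd free $n$-foil is exactly the $k=3$ case of the paper's Theorem \ref{Foils}, proved there by the same mechanism you use (cancel mixed-sign crossing pairs via Reidemeister II, leaving the $(2,k)$-torus closure with $k$ the signed crossing sum), and feeding it into Consequence \ref{MaxTref} yields the conjectured ceiling of $32.8125\%$, attained at $n=7,9$. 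One small divergence: you posit a stabilized odd foil as the even-crossing realizer, whereas the paper's formulation needs no such object---the $n$-crossing odd foil itself is conjectured to dominate all minimal prime $(n+1)$-crossing diagrams, and your stabilized foil (which by the paper's free-RI invariance theorem does tie the odd foil) is excluded by the minimality hypothesis.

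The genuine gap is the upper bound over arbitrary diagrams, and neither of your proposed tools closes it. Fiber-counting through the Jones polynomial is doubly problematic: beyond non-faithfulness (which you flag), the observation that all assignments of a fixed free diagram share the same Kauffman states gives one state sum with varying weights but no mechanism for comparing fiber sizes \emph{across different free diagrams}, which is what the extremal claim requires. The compression injection from trefoil assignments of an arbitrary $D$ into those of the foil has, as you concede, no canonical definition: Reidemeister simplifications act globally, different assignments of $D$ reduce along incompatible sequences, and the twist region of algebraic length three that survives reduction need not occupy any fixed location in $D$, so there is no evident way to make the map well-defined, let alone injective. The paper's own data shows the margin can be thin in unstructured ways (e.g.\ $6_3$ at $28.125\%$ against the pentafoil's $31.25\%$), so any injection would have to be delicate. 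Read your proposal as a correct derivation of the conjectured extremal value together with a clear-eyed program and an accurate diagnosis of its obstacles; as a proof of the conjecture it is incomplete---and so, deliberately, is the paper.
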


\begin{center}
\begin{tikzpicture}
\begin{scope}[xshift=-4cm, yshift=1cm]
\draw (.25,0) -- (0,0);
\draw (.25,.5) -- (0,.5);
\draw (.25, 0) -- (.75, .5);
\draw (.25,.5) -- (.75,0);
\draw (.75,0)--(1,0);
\draw (.75, .5) -- (1,.5);

\node[draw=none] (ellipsis1) at (1.425,0.25) {$\cdots$};

\draw (1.75,0) -- (2,0);
\draw (1.75,.5) -- (2,.5);
\draw (2, 0) -- (2.5, .5);
\draw (2,.5) -- (2.5,0);
\draw (2.5,0)--(2.75,0);
\draw (2.5, .5) -- (2.75,.5);

\draw (2.75,0) arc [start angle = 270, end angle = 90, radius = -1mm];
\draw (0,0) arc [start angle = 90, end angle = 270, radius = 1mm];
\draw (2.75,.5) arc [start angle = 90, end angle = 270, radius = -1mm];
\draw (0,.5) arc [start angle = 270, end angle = 90, radius = 1mm];
\draw (0,0.7) -- (2.75, 0.7);
\draw (0,-0.2) -- (2.75, -0.2);
\draw [decorate,decoration={brace,amplitude=10pt},xshift=0pt,yshift=-4pt]  (2.5,-0.2)--(0.25,-0.2) node [black,midway,yshift=-0.6cm]{$n$};
\end{scope}
\end{tikzpicture}
\end{center}

\begin{conjecture}
The free foil diagram with n crossings realize the upper bound on figure eights for all sufficiently complicated algebraic free diagrams with n and $n+1$ crossings.
\end{conjecture}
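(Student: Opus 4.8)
The plan is to follow the same template the trefoil conjecture invites: first compute exactly how many crossing-assignments of the free foil yield the figure eight, and then bound the figure-eight count of an arbitrary ``sufficiently complicated'' algebraic free diagram on $n$ (and $n+1$) crossings, showing it never exceeds the foil's count. The enabling structural fact is that the foil is the free closure of a single twist region, so its entire analysis is governed by what one row of same-region crossings can become.

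The first step, which I would isolate as a lemma, is that a twist region of $n$ freely assigned crossings collapses by Reidemeister~II moves to a single twist region of $|k|$ crossings, where $k=\sum_{i=1}^n \varepsilon_i$ is the signed sum of the assigned signs $\varepsilon_i\in\{\pm1\}$. Consequently the free foil on $n$ crossings produces \emph{exactly} the torus links and knots $(2,k)$ with $k\equiv n\pmod 2$ and $|k|\le n$, and a given $(2,k)$ is hit with multiplicity $\binom{n}{(n+k)/2}$. Since the figure eight $4_1$ is not a $(2,k)$ torus knot for any $k$, the number of assignments of the free foil producing a figure eight is $0$. Thus the quantity the conjecture calls ``the upper bound on figure eights'' is, for the foil itself, literally zero, and the remaining task is to prove that this $0$ really is an upper bound across the stated comparison class.

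To carry out that second step I would lean on the paper's own detection method. An algebraic free diagram is the closure of a Conway-algebraic tangle, so every crossing-assignment closes into an algebraic knot whose Jones polynomial can be read off as a weighted sum over the $2^m$ states common to all assignments of the $m$-crossing diagram, and $4_1$ is recognized by that polynomial. The target inequality is then that for every sufficiently complicated algebraic free diagram $D$ on $n$ or $n+1$ crossings, the number of assignments of $D$ landing on $4_1$ is at most the foil's count, which the lemma has already pinned at $0$.

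The hard part — and the place where the proposal either succeeds or forces a reinterpretation of the statement — is precisely this last comparison. A single twist region concentrates all of its assignments onto the short list of torus types $(2,k)$, which is exactly why the foil maximizes \emph{trefoils}; but that same concentration is what prevents it from reaching any two-syllable rational knot, and $4_1$ is the simplest two-syllable rational knot, realized with positive multiplicity by genuinely two-twist-region algebraic diagrams (for instance the standard free diagram of $4_1$ itself). I therefore expect the main obstacle to be pinning down which diagrams ``sufficiently complicated algebraic'' is meant to range over: the free foil realizes the upper bound on figure eights only if that class is restricted to single-twist-region (integer-tangle) closures, whereas for the full algebraic class the foil instead realizes the \emph{lower} bound of $0$. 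Settling this dichotomy — narrow the comparison class so the foil is the maximizer, or recognize the foil as the minimizer and relocate the upper bound — is where I would concentrate all the effort before attempting any of the routine multiplicity counts.
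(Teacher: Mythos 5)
You were asked about one of the paper's \emph{conjectures}: the paper offers no proof of this statement at all, only computational evidence, so there is no argument of theirs to compare yours against step by step. Your twist-region lemma is correct and is already the paper's Theorem \ref{Foils} (the free $n$-foil resolves into $(2,k)$ torus knots, each with multiplicity $\binom{n}{(n-k)/2}$ per chirality), and the paper itself draws exactly your conclusion from it: all resultants of foil diagrams are foils, so no figure eights are produced. But that is precisely why your literal parsing of the conjecture --- that the foil's own figure-eight count, namely $0$, is the claimed upper bound --- cannot be the intended reading, and the paper resolves the ambiguity in the Consequence that follows shortly after: ``For all algebraic free knot diagrams with $n$ and $n+1$ crossings, where $n$ is odd and $n \geq 3$, the \emph{trefoil} probability of the $n$-foil is the upper bound on figure eight probability.'' The intended claim is the composite of two other conjectures: (i) for diagrams coming from algebraic tangles, figure-eight probability is at most trefoil probability (with the non-algebraic exception $9_{40}$ motivating the algebraic hypothesis), and (ii) the $n$-foil maximizes trefoil probability among sufficiently complicated diagrams with $n$ and $n+1$ crossings (Conjecture \ref{trefoil conjecture}). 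Chaining these gives figure-eight probability $\leq$ trefoil probability $\leq$ the foil's trefoil probability $2^{1-n}\binom{n}{(n-3)/2}$; that last quantity, not zero, is what the foil ``realizes.''

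So the gap in your attempt is twofold. First, the dichotomy you flag as the hard part --- restrict the comparison class so the foil maximizes, or accept that the foil is the minimizer at $0$ --- is a false dichotomy created by misreading which of the foil's resultant counts is doing the bounding; the paper's own data (maximum observed figure-eight percentage $15.625\%$ for $7_7$ and $8_{12}$, versus the foil trefoil maximum of $32.8125\%$ for $7_1$ and $9_1$ from Consequence \ref{MaxTref}) is perfectly consistent with the intended statement and flatly inconsistent with a bound of $0$. Second, even under the correct reading, your plan supplies no mechanism for the actual comparison step: bounding the number of assignments of an arbitrary algebraic free diagram landing on $4_1$ by the foil's trefoil count would require global control over how states and weights distribute across the $2^m$ assignments, which neither you nor the paper provides --- that absence is exactly why this remains a conjecture there. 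Your zero-count lemma for the foil, while correct, therefore does no work toward the statement as intended.
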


\begin{conjecture} \label{unknotconjecture}
The following 2 m diagram, the free closure of the 2 m tangle, realizes the lower bound on the trefoils and the upper bound on the unknots for all sufficiently complicated free diagrams with m crossings.
\end{conjecture}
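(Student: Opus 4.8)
\section*{Proof proposal}

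Conjecture~\ref{unknotconjecture} makes two extremal claims about the $2m$ diagram, call it $D_m$: among all sufficiently complicated free diagrams with $m$ crossings, $D_m$ maximizes the probability of the unknot and minimizes the probability of the trefoil. The plan is to (i) compute the knot distribution of $D_m$ exactly, and then (ii) prove each extremal inequality by comparing $D_m$ against an arbitrary competitor. I would observe at the outset that the two halves are morally the same statement: a diagram that resolves to the unknot for as many assignments as possible has correspondingly few assignments left to realize any fixed nontrivial knot, the trefoil in particular, so I expect the trefoil minimum and the unknot maximum to fall out of a single analysis of how $D_m$ simplifies.

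For step (i), I would exploit that $D_m$ is a free closure of a tangle assembled from a fixed repeating motif. As each motif is appended, the only data needed to determine the knot type of the final closure is the Conway class of the partial tangle, so I would set up a transfer computation (equivalently a linear recurrence) over the finitely many relevant tangle classes, tracking how many of the assignments so far land in each class. Because the crossings in each motif are expected to either cancel by Reidemeister~II or contribute a full twist, I anticipate a low-order recurrence whose solution gives closed forms for the number $U(m)$ of assignments closing to the unknot and the number $T(m)$ closing to a trefoil; dividing by $2^{m}$ yields the two target probabilities. This part is mechanical once the figure is fixed.

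Step (ii) is the real content. For the unknot maximum, the benchmark ceiling is that the two alternating assignments of any reduced, connected, nontrivial diagram are never the unknot (Kauffman--Murasugi), so no $m$-crossing diagram beats $2^{m}-2$; I would try to show $D_m$ meets, or comes provably closest to, this ceiling, and then rule out every competitor by a local-move argument that converts unknot-producing assignments of a competitor into unknot-producing assignments of $D_m$ without loss. For the trefoil minimum, the second theorem above already guarantees at least one trefoil assignment in every nontrivial diagram; I would upgrade this to a counting statement by exhibiting, in an arbitrary competitor, an injection from the trefoil assignments of $D_m$ into its own, forcing $T(\mathrm{competitor}) \ge T(m)$.

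The main obstacle is precisely the universal quantification in step (ii): comparing the assignment-to-knot maps of two structurally unrelated diagrams requires a handle on the full knot distribution of an arbitrary free diagram, which is exactly what is not yet available and what keeps the statement a conjecture. A secondary but necessary task is to make ``sufficiently complicated'' precise --- presumably reduced, prime, and of crossing number above some explicit threshold --- so that small sporadic diagrams cannot overtake $D_m$, with the finitely many borderline cases dispatched by the computer search already used elsewhere in the paper.
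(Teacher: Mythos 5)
You have not proved this statement, and neither does the paper: it is explicitly a conjecture, and the authors say of the conjectures at the ends of their sections that they are ``supported by the data we generated, but do not seem accessible to prove at the moment.'' So there is no paper proof to compare against. What the paper \emph{does} carry out is precisely your step (i): Section~\ref{nk} derives closed forms for the resultant counts of the free $2$ $n$ knots --- $2^{n+1}+2\binom{n}{(n-1)/2}$ unknots and $2\binom{n}{(n-1)/2}$ trefoils for odd $n$, with the analogous even case --- by direct tangle analysis (pairwise Reidemeister~II cancellation in the bottom $n$-tangle, then a case split on the top $2$-tangle) rather than your transfer-matrix recurrence; both routes are mechanical and equivalent. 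Your step (ii), however, is the entire content of the conjecture, and your proposal contains no argument for it: the ``injection from the trefoil assignments of $D_m$ into a competitor's'' is named but never constructed, and the ``local-move argument'' for the unknot maximum is likewise a placeholder. Your closing paragraph concedes exactly this, so what you have is the (already-in-the-paper) distribution of $D_m$ plus a restatement of the open problem.

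One substantive misdirection is worth flagging. Your proposed benchmark ceiling of $2^m-2$ unknot assignments (from the fact that reduced alternating assignments are knotted) is true but wildly off scale, and $D_m$ does not come close to meeting it: the paper proves the unknot probability of the $2$ $n$ family tends to $\tfrac12$ as $n\to\infty$ and is maximized at $0.75$ by the trefoil, and the conjectured \emph{global} ceiling is $0.75$, i.e.\ roughly $3\cdot 2^{m-2}$ assignments, not $2^m-2$. So a strategy of showing $D_m$ ``meets the ceiling'' cannot work; any proof must show every competitor falls \emph{below} $D_m$'s count, which is a comparison of two quantities both far from the trivial bound. Two smaller corrections: the $2$ $m$ diagram has $m+2$ crossings, not $m$ (the refined conjecture in Section~\ref{nk} is stated for diagrams with $n+2$ crossings, so normalize carefully when quantifying over competitors); and your precisification of ``sufficiently complicated'' must at minimum exclude non-minimal diagrams, since the paper proves resultant probabilities are invariant under the free Reidemeister~I move, so a trefoil shape padded with $m-3$ kinks has unknot probability $0.75$ at any crossing count and would otherwise defeat the conjectured maximum --- this is exactly why the paper introduces minimal free knot diagrams.
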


\begin{center}
\begin{tikzpicture}
\draw (-.25, 2) -- (.25, 2.5);
\draw (-.25, 2.5) -- (-.1, 2.35);
\draw (.25, 2) -- (.1, 2.15);

\draw (.25, 1.75) .. controls (.3, 1.875) .. (.25, 2);
\draw (-.25, 1.75) .. controls (-.3, 1.875) .. (-.25, 2);

\draw (-.25, 1.75) -- (-.1, 1.6);
\draw (-.25, 1.25) -- (.25, 1.75);
\draw (.25, 1.25) -- (.1, 1.4);

\draw (-.25, 1.25) .. controls (-.35, 1.15) and (-2, .6) .. (-1.42,0);
\draw (.25, 1.25) .. controls (.35, 1.15) and (2, .6) .. (1.42,0);

\draw (-.25, 2.5) .. controls (-1, 3) and (-2.25, .15) .. (-1.42, -.5);
\draw (.25, 2.5) .. controls (1, 3) and (2.25, .15) .. (1.42, -.5);

\begin{scope}[yscale = -1]
\draw (-1.17,0) -- (-1.42,0);
\draw (-1.17,.5) -- (-1.42,.5);
\draw (-1.17, 0) -- (-1.02, .15);
\draw (-1.17,.5) -- (-.67,0);
\draw (-.67, .5) -- (-.82, .35);
\draw (-.67,0)--(-.42,0);
\draw (-.67, .5) -- (-.42,.5);

\node[draw=none] (ellipsis1) at (0.05,0.25) {$\cdots$};

\draw (1.17,0) -- (1.42,0);
\draw (1.17,.5) -- (1.42,.5);
\draw (1.17, 0) -- (.67, .5);
\draw (.82,.15) -- (.67,0);
\draw (1.02, .35) -- (1.17, .5);
\draw (.67,0) -- (.42,0);
\draw (.67, .5) -- (.42,.5);
\end{scope}

\draw [decorate,decoration={brace,amplitude=10pt},xshift=0pt,yshift=-4pt]  (1.25,-0.5)--(-1.25,-0.5) node [black,midway,yshift=-0.6cm]{$m$};

\end{tikzpicture}
\end{center}

Based on the determined formula for unknots produced by a randomization of a free 2 $n$ knot, we also propose an absolute maximum of resultant unknot probability coming from a nontrivial free knot diagram of 0.75.

The structure of this article is as follows.  
Section \ref{background} gives the necessary knot theory background and defines free knot diagrams.  
Our general results are in Section \ref{results}.  
The most interesting of these is that every free knot diagram produces trefoils.  
Section \ref{tangles} reminds the reader of Conway's tangle notation and computes the complete resolution of $n$-foil knots (a family consisting of knots made by twisting two strands an odd number of times, including the trefoil as its simplest member).  
Section \ref{nk} computes the unknot probability, and many other knot probabilities, for the $k$  $n$ tangle knots.
Section \ref{21n} does the same for the $2$ $1$ $n$ tangle knots.  

At the end of each section, we include some conjectures which are supported by the data we generated, but do not seem accessible to prove at the moment.  Future directions to investigate include these conjectures and working out resolutions of other knot families.  It would be great to develop a more theoretical understanding of the role that various structure plays in knot resolutions:  understanding tangle structure and its role in generating (apparently) minimum-unknot and maximal-unknot examples, and understanding braid structure and why the figure eight knot appears to be universal among prime knot diagrams with braid index 3 and higher.

\section{Background}\label{background}

A knot is a smooth embedding of the circle $\mathbb{S}^1$ into Euclidean three-space $\mathbb{R}^3$, considered up to isotopy.  The mathematical study of knots, however, quickly turns into a study of essentially two-dimensional objects like so:  
\begin{center}
\begin{tikzpicture}
	\coordinate (A) at (0:1cm) {};
	\coordinate (B) at (120:1cm) {};
	\coordinate (C) at (240:1cm) {};
	\coordinate (D) at (0:2cm) {};
	\coordinate (E) at (120:2cm) {};
	\coordinate (F) at (240:2cm) {};
	
	\draw (A) .. controls +(90:.8cm) and +(30:.8cm) .. (E);
	\draw[line width=5pt, white] (D)  .. controls +(90:.8cm) and +(30:.8cm) .. (B);
	\draw (C) .. controls +(-30:.8cm) and +(270:.8cm) .. (D)  .. controls +(90:.8cm) and +(30:.8cm) .. (B);
	\draw [line width=5pt, white] (F)  .. controls +(-30:.8cm) and +(-90:.8cm) .. (A);
	\draw (B) .. controls +(210:.8cm) and +(150:.8cm) .. (F)  .. controls +(-30:.8cm) and +(-90:.8cm) .. (A);
	\draw[line width=5pt, white] (E)  .. controls +(210:.8cm) and +(150:.8cm) .. (C);
	\draw (E)  .. controls +(210:.8cm) and +(150:.8cm) .. (C);
\end{tikzpicture}
\end{center}
This is a \emph{knot diagram}: a smooth projection of a knot into $\mathbb{R}^2$ in which all crossings are transverse (not tangent) and involve only two strands, and the (barely) 3D data of which strand goes over is denoted by a break in the understand.  A \emph{strand} of a knot is the image under the embeddings of any interval of the original circle.  A \emph{link} is the multi-component generalization of a knot.

Given two knot diagrams, how do we know if they represent the same knot?  We may attempt to directly manipulate one diagram to turn it into the other one. Reidemeister moves are three isotopies between diagrams which generate all  isotopies:

\begin{center}
\begin{tikzpicture}
\draw (-1.75, 1.5) node {RI};

\draw[thick] (-3,0.75) -- (-3, -0.75);

\draw [thick] (-2,-.5) .. controls (-1.4, .4) .. (-1,0.45);
\draw [thick] (-2,.5) -- (-1.7, 0.1);
\draw [thick] (-1.575, -0.075) .. controls (-1.3, -0.45) .. (-1, -0.45);

\draw [thick, <->] (-2.75,0) -- (-2.25,0);

\draw [thick] (-1, -0.45) .. controls (-0.6, -.45) .. (-.575, 0);
\draw [thick] (-1, 0.45) .. controls (-0.6, .45) .. (-.575, 0);

\draw (2, 1.5) node {RII};

\draw[thick] (0.5,1) .. controls (1.5,0) .. (0.5,-1);
\draw [thick] (1.5,1) -- (1.05, 0.55);
\draw [thick] (1.5,-1) -- (1.05, -0.55);
\draw [thick] (0.92, 0.43) .. controls (0.5,0) .. (0.92, -0.43);

\draw [thick, <->] (1.75,0) -- (2.25,0);

\draw [thick] (2.5,1) .. controls (3,0) .. (2.5,-1);
\draw [thick] (3.5,1) .. controls (3,0) .. (3.5,-1);

\end{tikzpicture}

\begin{tikzpicture}
\draw (0, 1.5) node {RIII};

\draw [thick] (-2.5,1) -- (-1.65, 0.15);
\draw [thick] (-2.5,-1) -- (-.5,1);
\draw [thick] (-.5,-1) -- (-1.35, -0.15); 

\draw [thick, <->]  (-0.25, 0) -- (.25,0);

\draw [thick] (0.5,1) -- (1.35, 0.15);
\draw [thick] (0.5,-1) -- (2.5,1);
\draw [thick] (2.5,-1) -- (1.625, -0.15); 

\draw [thick] (-1.575, 1) -- (-1.98, 0.63);
\draw [thick] (-2.15, 0.5) .. controls (-2.75, 0) .. (-2.15, -0.5);
\draw [thick] (-1.575, -1) -- (-1.98, -0.63);

\draw [thick] (1.575, 1) -- (1.98, 0.63);
\draw [thick] (2.15, 0.5) .. controls (2.75, 0) .. (2.15, -0.5);
\draw [thick] (1.575, -1) -- (1.98, -0.63);

\end{tikzpicture}
\end{center}
The benefit of the Reidemeister moves is not in their direct application, which is tedious, but in proving that  invariants defined on knot diagrams lift to knots themselves.

The Jones polynomial, the second  polynomial knot invariant to be discovered, was originally observed by Jones in the context of braid group representations \cite{MR766964}.  We present it by a two-step diagrammatic definition due to Kauffman.  First we will define the Kauffman bracket, following  \cite{MR899057}:

\begin{definition}
The Kauffman bracket is a (Laurent) polynomial in the variable $A$.  It is the result of repeatedly applying a `crossing-resolving' relation which smooths crossings in both possible ways, combined with the relation that any closed and unlinked loop is counted by multiplying by $(-A^2-A^{-2})$.

$$
\left<
\begin{tikzpicture}[baseline=.4cm]
	\draw[thick] (0,0)--(1,1);
	\draw[thick] (0,1)--(.4,.6);
	\draw[thick] (1,0)--(.6,.4);
\end{tikzpicture}
\right>
=
A
\left<
\begin{tikzpicture}[baseline=.4cm]
	\draw[thick] (0,0) .. controls (.5,.5) .. (0,1);
	\draw[thick] (1,0) .. controls (.5,.5) .. (1,1);
\end{tikzpicture}
\right>
+
A^{-1}
\left<
\begin{tikzpicture}[baseline=.4cm]
	\draw[thick] (0,0) .. controls (.5,.5) .. (1,0);
	\draw[thick] (0,1) .. controls (.5,.5) .. (1,1);
\end{tikzpicture}
\right>
$$

\newcommand*{\Scale}[2][4]{\scalebox{#1}{\ensuremath{#2}}}

$$
\left<
\begin{tikzpicture}[baseline=0.4cm]
	\draw[thick] (.5,.5) circle (.25);
\end{tikzpicture}
\cup
L
\right>
=
(-A^2-A^{-2})
\left<
L
\right>
\hspace{1cm}
\left<
\begin{tikzpicture}[baseline=0.4cm]
	\draw[thick] (.5,.5) circle (.25);
\end{tikzpicture}
\right>
=
1
$$
\end{definition}

\begin{example}
Let's work out the Kauffman bracket of an unknot with the same `shape' as the trefoil knot.
$$
\left<
\begin{tikzpicture}[baseline=0cm]
\begin{scope}[scale=0.5]
	\coordinate (A) at (0:1cm) {};
	\coordinate (B) at (120:1cm) {};
	\coordinate (C) at (240:1cm) {};
	\coordinate (D) at (0:2cm) {};
	\coordinate (E) at (120:2cm) {};
	\coordinate (F) at (240:2cm) {};
	
	\draw (A) .. controls +(90:.8cm) and +(30:.8cm) .. (E);
	\draw[line width=5pt, white] (D)  .. controls +(90:.8cm) and +(30:.8cm) .. (B);
	\draw (C) .. controls +(-30:.8cm) and +(270:.8cm) .. (D);
	\draw (D)  .. controls +(90:.8cm) and +(30:.8cm) .. (B);
	\draw (F)  .. controls +(-30:.8cm) and +(-90:.8cm) .. (A);
	\draw [line width=5pt, white] (C)  .. controls +(-30:.8cm) and +(-90:.8cm) .. (D);
	\draw (C) .. controls +(-30:.8cm) and +(270:.8cm) .. (D);
	\draw (B) .. controls +(210:.8cm) and +(150:.8cm) .. (F);
	\draw[line width=5pt, white] (E)  .. controls +(210:.8cm) and +(150:.8cm) .. (C);
	\draw (E)  .. controls +(210:.8cm) and +(150:.8cm) .. (C);
\end{scope}
\end{tikzpicture}
\right>
=
A
\left<
\begin{tikzpicture}[baseline=0cm]
\begin{scope}[scale=0.5]
	\coordinate (A) at (0:1cm) {};
	\coordinate (B) at (120:1cm) {};
	\coordinate (C) at (240:1cm) {};
	\coordinate (D) at (0:2cm) {};
	\coordinate (E) at (120:2cm) {};
	\coordinate (F) at (240:2cm) {};
	
	\draw (A) .. controls +(90:.8cm) and +(30:.8cm) .. (E);
	\draw[line width=5pt, white] (D)  .. controls +(90:.8cm) and +(30:.8cm) .. (B);
	\draw (C) .. controls +(-30:.8cm) and +(270:.8cm) .. (D);
	\draw (D)  .. controls +(90:.8cm) and +(30:.8cm) .. (B);
	\draw (F)  .. controls +(-30:.8cm) and +(-90:.8cm) .. (A);
	\draw [line width=5pt, white] (C)  .. controls +(-30:.8cm) and +(-90:.8cm) .. (D);
	\draw (C) .. controls +(-30:.8cm) and +(270:.8cm) .. (D);
	\draw (B) .. controls +(210:.8cm) and +(150:.8cm) .. (F);
	\draw[line width=5pt, white] (E)  .. controls +(210:.8cm) and +(150:.8cm) .. (C);
	\draw (E)  .. controls +(210:.8cm) and +(150:.8cm) .. (C);

\draw[white, fill=white] (180: 1cm) circle (.75cm);
\draw (-.681, .725) .. controls (-1, .5)  and (-1.2, .25) ..  (-1.32,.7);
\draw (-.681, -.725) .. controls (-1, -.5)  and (-1.2, -.25) ..  (-1.32,-.7);
\end{scope}
\end{tikzpicture}
\right>
+A^{-1}
\left<
\begin{tikzpicture}[baseline=0cm]
\begin{scope}[scale=0.5]
	\coordinate (A) at (0:1cm) {};
	\coordinate (B) at (120:1cm) {};
	\coordinate (C) at (240:1cm) {};
	\coordinate (D) at (0:2cm) {};
	\coordinate (E) at (120:2cm) {};
	\coordinate (F) at (240:2cm) {};
	
	\draw (A) .. controls +(90:.8cm) and +(30:.8cm) .. (E);
	\draw[line width=5pt, white] (D)  .. controls +(90:.8cm) and +(30:.8cm) .. (B);
	\draw (C) .. controls +(-30:.8cm) and +(270:.8cm) .. (D);
	\draw (D)  .. controls +(90:.8cm) and +(30:.8cm) .. (B);
	\draw (F)  .. controls +(-30:.8cm) and +(-90:.8cm) .. (A);
	\draw [line width=5pt, white] (C)  .. controls +(-30:.8cm) and +(-90:.8cm) .. (D);
	\draw (C) .. controls +(-30:.8cm) and +(270:.8cm) .. (D);
	\draw (B) .. controls +(210:.8cm) and +(150:.8cm) .. (F);
	\draw[line width=5pt, white] (E)  .. controls +(210:.8cm) and +(150:.8cm) .. (C);
	\draw (E)  .. controls +(210:.8cm) and +(150:.8cm) .. (C);
	
\draw[white, fill=white] (180: 1cm) circle (.75cm);
\draw (-.695, .72) .. controls (-1., .5) and (-1, -.5) ..  (-.695, -.72);
\draw (-1.32,.7) .. controls (-1.2, 0)  ..  (-1.32,-.7);
\end{scope}
\end{tikzpicture}
\right>
$$

$$
A
\left<
\begin{tikzpicture}[baseline=0cm]
\begin{scope}[scale=0.5]
	\coordinate (A) at (0:1cm) {};
	\coordinate (B) at (120:1cm) {};
	\coordinate (C) at (240:1cm) {};
	\coordinate (D) at (0:2cm) {};
	\coordinate (E) at (120:2cm) {};
	\coordinate (F) at (240:2cm) {};
	
	\draw (A) .. controls +(90:.8cm) and +(30:.8cm) .. (E);
	\draw[line width=5pt, white] (D)  .. controls +(90:.8cm) and +(30:.8cm) .. (B);
	\draw (C) .. controls +(-30:.8cm) and +(270:.8cm) .. (D);
	\draw (D)  .. controls +(90:.8cm) and +(30:.8cm) .. (B);
	\draw (F)  .. controls +(-30:.8cm) and +(-90:.8cm) .. (A);
	\draw [line width=5pt, white] (C)  .. controls +(-30:.8cm) and +(-90:.8cm) .. (D);
	\draw (C) .. controls +(-30:.8cm) and +(270:.8cm) .. (D);
	\draw (B) .. controls +(210:.8cm) and +(150:.8cm) .. (F);
	\draw[line width=5pt, white] (E)  .. controls +(210:.8cm) and +(150:.8cm) .. (C);
	\draw (E)  .. controls +(210:.8cm) and +(150:.8cm) .. (C);

\draw[white, fill=white] (180: 1cm) circle (.75cm);
\draw (-.681, .725) .. controls (-1, .5)  and (-1.2, .25) ..  (-1.32,.7);
\draw (-.681, -.725) .. controls (-1, -.5)  and (-1.2, -.25) ..  (-1.32,-.7);
\end{scope}
\end{tikzpicture}
\right>
=
A
\left(
A
\left<
\begin{tikzpicture}[baseline=0cm]

\begin{scope}[scale=0.5]
	\coordinate (A) at (0:1cm) {};
	\coordinate (B) at (120:1cm) {};
	\coordinate (C) at (240:1cm) {};
	\coordinate (D) at (0:2cm) {};
	\coordinate (E) at (120:2cm) {};
	\coordinate (F) at (240:2cm) {};
	
	\draw (A) .. controls +(90:.8cm) and +(30:.8cm) .. (E);
	\draw[line width=5pt, white] (D)  .. controls +(90:.8cm) and +(30:.8cm) .. (B);
	\draw (C) .. controls +(-30:.8cm) and +(270:.8cm) .. (D);
	\draw (D)  .. controls +(90:.8cm) and +(30:.8cm) .. (B);
	\draw (F)  .. controls +(-30:.8cm) and +(-90:.8cm) .. (A);
	\draw [line width=5pt, white] (C)  .. controls +(-30:.8cm) and +(-90:.8cm) .. (D);
	\draw (C) .. controls +(-30:.8cm) and +(270:.8cm) .. (D);
	\draw (B) .. controls +(210:.8cm) and +(150:.8cm) .. (F);
	\draw[line width=5pt, white] (E)  .. controls +(210:.8cm) and +(150:.8cm) .. (C);
	\draw (E)  .. controls +(210:.8cm) and +(150:.8cm) .. (C);

\draw[white, fill=white] (180: 1cm) circle (.75cm);
\draw (-.681, .725) .. controls (-1, .5)  and (-1.2, .25) ..  (-1.32,.7);
\draw (-.681, -.725) .. controls (-1, -.5)  and (-1.2, -.25) ..  (-1.32,-.7);

\draw [white, fill=white] (50: 1cm) circle (.75cm);
\draw (-.09, .995) .. controls (.45, 1.1) .. (.18, 1.39);
\draw (1, .08) .. controls (.85, .9) .. (1.41, .725);
\end{scope}
\end{tikzpicture}
\right>
+
A^{-1}
\left<
\begin{tikzpicture}[baseline=0cm]
\begin{scope}[scale=0.5]
	\coordinate (A) at (0:1cm) {};
	\coordinate (B) at (120:1cm) {};
	\coordinate (C) at (240:1cm) {};
	\coordinate (D) at (0:2cm) {};
	\coordinate (E) at (120:2cm) {};
	\coordinate (F) at (240:2cm) {};
	
	\draw (A) .. controls +(90:.8cm) and +(30:.8cm) .. (E);
	\draw[line width=5pt, white] (D)  .. controls +(90:.8cm) and +(30:.8cm) .. (B);
	\draw (C) .. controls +(-30:.8cm) and +(270:.8cm) .. (D);
	\draw (D)  .. controls +(90:.8cm) and +(30:.8cm) .. (B);
	\draw (F)  .. controls +(-30:.8cm) and +(-90:.8cm) .. (A);
	\draw [line width=5pt, white] (C)  .. controls +(-30:.8cm) and +(-90:.8cm) .. (D);
	\draw (C) .. controls +(-30:.8cm) and +(270:.8cm) .. (D);
	\draw (B) .. controls +(210:.8cm) and +(150:.8cm) .. (F);
	\draw[line width=5pt, white] (E)  .. controls +(210:.8cm) and +(150:.8cm) .. (C);
	\draw (E)  .. controls +(210:.8cm) and +(150:.8cm) .. (C);

\draw[white, fill=white] (180: 1cm) circle (.75cm);
\draw (-.681, .725) .. controls (-1, .5)  and (-1.2, .25) ..  (-1.32,.7);
\draw (-.681, -.725) .. controls (-1, -.5)  and (-1.2, -.25) ..  (-1.32,-.7);

\draw [white, fill=white] (50: 1cm) circle (.75cm);
\draw (-.09, .995) .. controls (.55, 1.1) and (.9, .5) .. (1, .08);
\draw  (.18, 1.39) .. controls (.795, 1) .. (1.41, .725);

\end{scope}
\end{tikzpicture}
\right>
\right)
$$

$$
=
A^2
(-A^2-A^{-2})
\left<
\begin{tikzpicture}[baseline=-0.1cm]
\draw (-.55, 0) .. controls (-.5, .5) .. (-.25, .25);
\draw (-.55, 0) .. controls (-.5, -.5) .. (-.25, -.25); 
\draw (-.25, -.25) -- (-.1, -.1);
\draw (-.25, .25) -- (.25, -.25);
\draw (.25, .25) -- (.1, .1);
\draw (.25, .25) .. controls (.5, .5) .. (.55, 0);
\draw (.25, -.25) .. controls (.5, -.5) .. (.55,0);
\end{tikzpicture}
\right>
+
\left<
\begin{tikzpicture}[baseline=-0.1cm]
\draw (-.55, 0) .. controls (-.5, .5) .. (-.25, .25);
\draw (-.55, 0) .. controls (-.5, -.5) .. (-.25, -.25); 
\draw (-.25, -.25) -- (-.1, -.1);
\draw (-.25, .25) -- (.25, -.25);
\draw (.25, .25) -- (.1, .1);
\draw (.25, .25) .. controls (.5, .5) .. (.55, 0);
\draw (.25, -.25) .. controls (.5, -.5) .. (.55,0);
\end{tikzpicture}
\right>
=
-A^4
\left<
\begin{tikzpicture}[baseline=-0.1cm]
\draw (-.55, 0) .. controls (-.5, .5) .. (-.25, .25);
\draw (-.55, 0) .. controls (-.5, -.5) .. (-.25, -.25); 
\draw (-.25, -.25) -- (-.1, -.1);
\draw (-.25, .25) -- (.25, -.25);
\draw (.25, .25) -- (.1, .1);
\draw (.25, .25) .. controls (.5, .5) .. (.55, 0);
\draw (.25, -.25) .. controls (.5, -.5) .. (.55,0);
\end{tikzpicture}
\right>
$$

$$
=
-A^4
\left(
A
\left<
\begin{tikzpicture}[baseline=-0.1cm]
\draw (0,0) circle (0.5cm);
\end{tikzpicture}
\right>
+
A^{-1}
\left<
\begin{tikzpicture}[baseline=-0.1cm]
\draw (.75,0) circle (0.5 cm);
\draw (-.75,0) circle (0.5 cm);
\end{tikzpicture}
\right>
\right)
=
A
$$

Similarly,

$$
A^{-1}
\left<
\begin{tikzpicture}[baseline=0cm]
\begin{scope}[scale=0.5]
	\coordinate (A) at (0:1cm) {};
	\coordinate (B) at (120:1cm) {};
	\coordinate (C) at (240:1cm) {};
	\coordinate (D) at (0:2cm) {};
	\coordinate (E) at (120:2cm) {};
	\coordinate (F) at (240:2cm) {};
	
	\draw (A) .. controls +(90:.8cm) and +(30:.8cm) .. (E);
	\draw[line width=5pt, white] (D)  .. controls +(90:.8cm) and +(30:.8cm) .. (B);
	\draw (C) .. controls +(-30:.8cm) and +(270:.8cm) .. (D);
	\draw (D)  .. controls +(90:.8cm) and +(30:.8cm) .. (B);
	\draw (F)  .. controls +(-30:.8cm) and +(-90:.8cm) .. (A);
	\draw [line width=5pt, white] (C)  .. controls +(-30:.8cm) and +(-90:.8cm) .. (D);
	\draw (C) .. controls +(-30:.8cm) and +(270:.8cm) .. (D);
	\draw (B) .. controls +(210:.8cm) and +(150:.8cm) .. (F);
	\draw[line width=5pt, white] (E)  .. controls +(210:.8cm) and +(150:.8cm) .. (C);
	\draw (E)  .. controls +(210:.8cm) and +(150:.8cm) .. (C);
	
\draw[white, fill=white] (180: 1cm) circle (.75cm);
\draw (-.695, .72) .. controls (-1., .5) and (-1, -.5) ..  (-.695, -.72);
\draw (-1.32,.7) .. controls (-1.2, 0)  ..  (-1.32,-.7);
\end{scope}
\end{tikzpicture}
\right>
=
-A^{-3}-A
\implies
\left<
\begin{tikzpicture}[baseline=0cm]
\begin{scope}[scale=0.5]
	\coordinate (A) at (0:1cm) {};
	\coordinate (B) at (120:1cm) {};
	\coordinate (C) at (240:1cm) {};
	\coordinate (D) at (0:2cm) {};
	\coordinate (E) at (120:2cm) {};
	\coordinate (F) at (240:2cm) {};
	
	\draw (A) .. controls +(90:.8cm) and +(30:.8cm) .. (E);
	\draw[line width=5pt, white] (D)  .. controls +(90:.8cm) and +(30:.8cm) .. (B);
	\draw (C) .. controls +(-30:.8cm) and +(270:.8cm) .. (D);
	\draw (D)  .. controls +(90:.8cm) and +(30:.8cm) .. (B);
	\draw (F)  .. controls +(-30:.8cm) and +(-90:.8cm) .. (A);
	\draw [line width=5pt, white] (C)  .. controls +(-30:.8cm) and +(-90:.8cm) .. (D);
	\draw (C) .. controls +(-30:.8cm) and +(270:.8cm) .. (D);
	\draw (B) .. controls +(210:.8cm) and +(150:.8cm) .. (F);
	\draw[line width=5pt, white] (E)  .. controls +(210:.8cm) and +(150:.8cm) .. (C);
	\draw (E)  .. controls +(210:.8cm) and +(150:.8cm) .. (C);
\end{scope}
\end{tikzpicture}
\right>
=
-A^{-3}
$$
\end{example}

By definition, the Kauffman bracket of an untwisted loop is one, but the knot diagram above has a different Kauffman bracket despite being another representation of the unknot. This is because the Kauffman bracket is invariant under Reidemeister II and III, but not under Reidemeister I.  To fix that, we make use of the writhe of a knot:

\begin{definition}
Choose an orientation for a knot $K$. Then match one of the below crossing types and assign the corresponding value to each crossing. The \emph{writhe $w(K)$} is the sum of the values at all crossings.
\end{definition}

\begin{center}
\begin{tikzpicture}
\begin{scope}
\draw (0, 1.25) node {+1};
\draw [->] (-.15, .15) -- (-1,1);
\draw [->] (-1,-1) -- (1,1);
\draw (.15, -.15) -- (1,-1);
\end{scope}
\begin{scope}[xshift=4cm]
\draw (0, 1.25) node {-1};
\draw [->] (1, -1) -- (-1,1);
\draw [->] (.15,.15) -- (1,1);
\draw (-.15, -.15) -- (-1,-1);
\end{scope}
 
\end{tikzpicture}
\end{center}

For a knot, the choice of orientation is arbitrary: reversing it will reverse the direction of \emph{both} arrows coming out of a crossing, thus preserving the sign of the writhe at that crossing. The writhe fails to be invariant under Reidemeister I, yet we can make the Kauffman bracket and writhe's failures cancel each other out,  creating a true invariant; this is the Jones polynomial.

\begin{definition}
For a knot K, the \emph{Jones polynomial} $V_K= \left< K \right> (A)^{-3 w(K)}$ is a knot invariant.
\end{definition}

Another knot invariant is the unknotting number. It is the least number of crossings one can change in any knot diagram to unknot it. This change looks like so, where the ``overstrand" becomes the ``understrand" and vice versa.

\begin{center}
\begin{tikzpicture}
\draw (-3, 0) node {Overstrand};
\draw [->, thick] (-1.9,-.05) -- (-1,-0.5);
\draw (6.5, 0) node {Understrand};
\draw [->, thick] (5.3, 0) -- (4.5,0.5);
\draw  (-.15, .15) -- (-1,1);
\draw (-1,-1) -- (1,1);
\draw (.15, -.15) -- (1,-1);
\draw [<->, thick] (1.5, 0) -- (2,0);
\draw  (4.5, -1) -- (2.5,1);
\draw  (3.65,.15) -- (4.5,1);
\draw (3.35, -.15) -- (2.5,-1);
\end{tikzpicture}
\end{center}

While easy to compute for a particular diagram, the difficulty arises when attempting to show that no other projection of the knot can be unknotted with fewer changes. Note that altering the crossing can not turn a knot into a link as swapping the crossing requires cutting one of the strands and, after rethreading, connecting it back to the same location as before.

The question we consider in this article, of which knots come from which free knot diagrams, is in some sense a converse to the question of unknotting number.  Instead of starting with a knot diagram and trying to change it to reach the unknot, we start with only the shape of a knot diagram, and ask where we may land by assigning its crossings.

Random knots have been studied before, usually from a geometric point of view relating to their appearance in random walks and polymers \cite{MR1981020}.
For diagrammatic random knots, some information is known about their average writhe \cite{MR2740363}, but the question of what percent of their assignments are unknots has not been answered.

\begin{example}
Here is a free knot diagram which can produce the trefoil knot and the unknot:
\begin{center}
\begin{tikzpicture}
	\coordinate (A) at (0:1cm) {};
	\coordinate (B) at (120:1cm) {};
	\coordinate (C) at (240:1cm) {};
	\coordinate (D) at (0:2cm) {};
	\coordinate (E) at (120:2cm) {};
	\coordinate (F) at (240:2cm) {};
	
	\draw (A) .. controls +(90:.8cm) and +(30:.8cm) .. (E);
	\draw (C) .. controls +(-30:.8cm) and +(270:.8cm) .. (D)  .. controls +(90:.8cm) and +(30:.8cm) .. (B);
	\draw (B) .. controls +(210:.8cm) and +(150:.8cm) .. (F)  .. controls +(-30:.8cm) and +(-90:.8cm) .. (A);
	\draw (E)  .. controls +(210:.8cm) and +(150:.8cm) .. (C);
\end{tikzpicture}
\end{center}
\end{example}

By assigning crossings in such a diagram, we may then use Reidemeister moves to simplify the knots. Here is an example assignment and simplification of the free trefoil.

\begin{tikzpicture}
	\coordinate (A) at (0:1cm) {};
	\coordinate (B) at (120:1cm) {};
	\coordinate (C) at (240:1cm) {};
	\coordinate (D) at (0:2cm) {};
	\coordinate (E) at (120:2cm) {};
	\coordinate (F) at (240:2cm) {};
	
	\draw (A) .. controls +(90:.8cm) and +(30:.8cm) .. (E);
	\draw (C) .. controls +(-30:.8cm) and +(270:.8cm) .. (D)  .. controls +(90:.8cm) and +(30:.8cm) .. (B);
	\draw (B) .. controls +(210:.8cm) and +(150:.8cm) .. (F)  .. controls +(-30:.8cm) and +(-90:.8cm) .. (A);
	\draw (E)  .. controls +(210:.8cm) and +(150:.8cm) .. (C);

\draw [->] (2.25,0) -- (2.75, 0);

\begin{scope}[xshift=4.25cm]
	\coordinate (A) at (0:1cm) {};
	\coordinate (B) at (120:1cm) {};
	\coordinate (C) at (240:1cm) {};
	\coordinate (D) at (0:2cm) {};
	\coordinate (E) at (120:2cm) {};
	\coordinate (F) at (240:2cm) {};
	
	\draw (A) .. controls +(90:.8cm) and +(30:.8cm) .. (E);
	\draw[line width=5pt, white] (D)  .. controls +(90:.8cm) and +(30:.8cm) .. (B);
	\draw (C) .. controls +(-30:.8cm) and +(270:.8cm) .. (D);
	\draw (D)  .. controls +(90:.8cm) and +(30:.8cm) .. (B);
	\draw (F)  .. controls +(-30:.8cm) and +(-90:.8cm) .. (A);
	\draw [line width=5pt, white] (C)  .. controls +(-30:.8cm) and +(-90:.8cm) .. (D);
	\draw (C) .. controls +(-30:.8cm) and +(270:.8cm) .. (D);
	\draw (B) .. controls +(210:.8cm) and +(150:.8cm) .. (F);
	\draw[line width=5pt, white] (E)  .. controls +(210:.8cm) and +(150:.8cm) .. (C);
	\draw (E)  .. controls +(210:.8cm) and +(150:.8cm) .. (C);
\end{scope}

\draw [<->] (6.5, 0) -- (7,0);

\begin{scope}[xshift=8.5cm]
	\coordinate (A) at (0:1cm) {};
	\coordinate (B) at (120:1cm) {};
	\coordinate (C) at (240:1cm) {};
	\coordinate (D) at (0:2cm) {};
	\coordinate (E) at (120:2cm) {};
	\coordinate (F) at (240:2cm) {};
	\coordinate (G) at (0:0.3cm) {};
	
	\draw (A) .. controls +(90:.8cm) and +(30:.8cm) .. (E);
	\draw [line width=5pt, white] (C)  .. controls +(-30:.8cm) and +(-90:.8cm) .. (D);
	\draw (B) .. controls +(210:.8cm) and +(150:.8cm) .. (F)  .. controls +(-30:.8cm) and +(-90:.8cm) .. (A);
	\draw[line width=5pt, white] (E)  .. controls +(210:.8cm) and +(150:.8cm) .. (C);
	\draw (E)  .. controls +(210:.8cm) and +(150:.8cm) .. (C);
	\draw (B) .. controls (0.25, 1.1) .. (G);
	\draw (C) .. controls (0.25, -1.1) .. (G);
\end{scope}
\end{tikzpicture}

Similar simplification with different assignments shows the free trefoil diagram's 8 resultants are 6 unknots and 2 trefoils. 

In order to investigate the combinatorial properties of `crossingless' knots, we define a free knot diagram, meaning a projection into the plane of a knot which does not record which strands go over which others at crossings.  It is more convenient, however, to say the following:

\begin{definition}
A \emph{free knot diagram} is a planar 4-valent graph, considered up to planar isotopy (i.e. continuous deformations in the plane which preserve vertices and edges).   The 4-valent vertices are called \emph{free crossings}.
\end{definition}

We may produce a free knot diagram from any knot diagram, by forgetting the over/under information of the crossings:

\begin{definition}
The \emph{shape} of a knot diagram is the free knot diagram which results from making all the crossings of the original knot diagram  into free crossings.
\end{definition}

Reidemeister moves do not apply to free knot diagrams, since planar isotopies of free knot diagrams  preserve crossings. A Reidemeister move (incorrectly) applied to a free knot diagram  $S_1$ would create a distinct free knot diagram $S_2$. We can move back to more familiar ground by ``assigning" crossings:

\begin{definition}
When we \emph{assign} a free crossing, we choose an overstrand and an understrand at that crossing.   An \emph{assignment} of a free knot is a choice of how to assign each crossing.
\end{definition}

\begin{definition}
A \emph{mixed knot diagram} is the projection into the plane of a knot, which records over/under information for only some crossings.
\end{definition}

\begin{example}
A free knot diagram (left) and a mixed knot diagram (right).\\
\begin{center}
\begin{tikzpicture}
	\coordinate (A) at (0:1cm) {};
	\coordinate (B) at (120:1cm) {};
	\coordinate (C) at (240:1cm) {};
	\coordinate (D) at (0:2cm) {};
	\coordinate (E) at (120:2cm) {};
	\coordinate (F) at (240:2cm) {};
	
	\draw (A) .. controls +(90:.8cm) and +(30:.8cm) .. (E);
	\draw (C) .. controls +(-30:.8cm) and +(270:.8cm) .. (D)  .. controls +(90:.8cm) and +(30:.8cm) .. (B);
	\draw (B) .. controls +(210:.8cm) and +(150:.8cm) .. (F)  .. controls +(-30:.8cm) and +(-90:.8cm) .. (A);
	\draw (E)  .. controls +(210:.8cm) and +(150:.8cm) .. (C);
\end{tikzpicture}
\begin{tikzpicture}
	\coordinate (A) at (0:1cm) {};
	\coordinate (B) at (120:1cm) {};
	\coordinate (C) at (240:1cm) {};
	\coordinate (D) at (0:2cm) {};
	\coordinate (E) at (120:2cm) {};
	\coordinate (F) at (240:2cm) {};
	
	\draw (A) .. controls +(90:.8cm) and +(30:.8cm) .. (E);
	\draw[line width=5pt, white] (D)  .. controls +(90:.8cm) and +(30:.8cm) .. (B);
	\draw (C) .. controls +(-30:.8cm) and +(270:.8cm) .. (D)  .. controls +(90:.8cm) and +(30:.8cm) .. (B);
	\draw [line width=5pt, white] (F)  .. controls +(-30:.8cm) and +(-90:.8cm) .. (A);
	\draw (B) .. controls +(210:.8cm) and +(150:.8cm) .. (F)  .. controls +(-30:.8cm) and +(-90:.8cm) .. (A);
	\draw (E)  .. controls +(210:.8cm) and +(150:.8cm) .. (C);
\end{tikzpicture}
\end{center}
\end{example}

In a mixed knot diagram, we may freely apply Reidemeister moves whenever they involve genuine crossings and no free crossings.

In exploring the question of ``which knots does this free knot diagram produce?," we assign every possible combination of crossings on a free knot diagram. The two choices at each crossing ensure that for a free knot diagram with $n$ crossings, there are $2^n$ total resultants. The computation of all resultants creates a sample space of outcomes for a random assignment of crossings for a free knot diagram.

\begin{definition}
The \emph{resultant knot probability} for a given knot $R$ from a free knot diagram $F$ is the probability that the randomization process applied to $F$ will produce $R$.
\end{definition}

Whenever a resultant knot is produced, its mirror image is also produced.  Thus, we do not distinguish knots and their mirror images, and the minimum resultant knot probability for an $n$-crossing free knot diagram is $\frac{2}{2^n}$.

The direct calculation of resultant knot probabilities is intensive, and computation time increases quickly as $n$ increases.
In the appendix, we list the probabilities of the unknot, trefoil, and figure eight knots, in addition to the expectation value, for the knot shapes of the knots with at most nine crossings. The question of what bounds can we place on resultants in general is explored in the next section.

Readers may wonder if there is a connection between our free knot diagrams, and virtual knot theory \cite{MR1721925}. 
Notice, however, that in virtual knot theory the second Reidemeister move holds, while in our context, there is no second Reidemeister move involving free crossings.

\begin{center}
\begin{tikzpicture}
\draw[thick] (-3,1) .. controls (-2,0) .. (-3,-1);
\draw[thick] (-2, 1) .. controls (-3, 0) .. (-2, -1);
\draw[thick] (-2.5, 0.475) circle (.2);
\draw[thick] (-2.5, -0.475) circle (.2);

\draw [thick, <->] (-1.75,0) -- (-1.25,0);

\draw [thick] (-1,1) .. controls (-0.5,0) .. (-1,-1);
\draw [thick] (0,1) .. controls (-.5,0) .. (0,-1);
\draw[thick] (1,1) .. controls (2,0) .. (1,-1);
\draw[thick] (2, 1) .. controls (1, 0) .. (2, -1);

\draw [thick, <->] (2.25,0) -- (2.75,0);
\draw [thick, red] (2.25, .25) -- (2.75, -.25);
\draw [thick, red] (2.75, .25) -- (2.25, -.25);

\draw [thick] (3,1) .. controls (3.5,0) .. (3,-1);
\draw [thick] (4,1) .. controls (3.5,0) .. (4,-1);

\end{tikzpicture}
\end{center}

\section{General results}\label{results}
That any knot diagram can have some of its crossings changed to represent the unknot is well-known, as is the algorithm described below for doing so.  We include this proof as a warm up, and because it uses techniques we will draw on later.

\begin{theorem}
An $n$-crossing free knot diagram has a minimum of $2n$ assignments which are the unknot.
\end{theorem}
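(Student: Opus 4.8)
The plan is to use the classical \emph{descending diagram} unknotting algorithm and then do the counting carefully. Fix an orientation of the underlying curve and a basepoint $b$ lying on one of the edges. Traverse the curve once starting at $b$, and at each free crossing declare the strand that is visited \emph{first} to be the overstrand. Call the resulting assignment the descending assignment based at $b$, written $D_b$. First I would show that every $D_b$ is the unknot. The cleanest way is to realize $D_b$ in $\mathbb{R}^3$ by a height function: parametrize the knot by arc length $s \in [0,\ell]$ starting at $b$, set the $z$-coordinate to be a strictly decreasing function of $s$, and close the curve by a single nearly-vertical arc from the end back to the start. Then at each crossing the strand reached at smaller $s$ sits at greater height, so it is the overstrand, exactly matching $D_b$; and a curve monotone in height except for one return arc is visibly isotopic to the round circle. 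An equivalent, purely diagrammatic argument peels crossings off one at a time from $b$ using Reidemeister I and II, which is the technique the paper promises to reuse later.

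Next I would count how many distinct assignments arise as $b$ ranges over the $2n$ edges. The key elementary observation is that, for two basepoints $b,b'$ (same orientation), the assignments $D_b$ and $D_{b'}$ differ at a crossing $c$ exactly when precisely one of the two visits to $c$ lies on the arc from $b$ to $b'$. Hence $D_b = D_{b'}$ iff that arc is \emph{balanced}, meaning that for every crossing it contains both visits or neither. Therefore the $2n$ descending assignments are pairwise distinct precisely when the diagram admits no proper balanced sub-arc running between two distinct edges.

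This is where the main work lies, and it is the main obstacle: a diagram can fail to be ``prime'' in this sense, the simplest failure being a nugatory (Reidemeister~I) kink, whose single chord occupies a balanced sub-arc of length two and forces a collision. To finish I would argue by strong induction on $n$. If no proper balanced sub-arc exists, the $2n$ descending assignments are distinct unknots and we are done. If a proper balanced sub-arc does exist, it exhibits two points of the curve that separate the crossings into two nonempty groups with no crossing passing between them, i.e.\ the free diagram is a connect sum $F = F_1 \# F_2$ with $n_1, n_2 \ge 1$ crossings and $n_1 + n_2 = n$. Since an assignment of $F$ is exactly a pair of assignments of $F_1$ and $F_2$, and a connect sum is the unknot iff both summands are, the number of unknot assignments is multiplicative: it equals $u(F_1)\,u(F_2) \ge (2n_1)(2n_2) \ge 2(n_1+n_2) = 2n$, using the inductive hypothesis together with the elementary inequality $2n_1 n_2 \ge n_1 + n_2$ for positive integers. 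The base case $n=1$ is a single kink, whose two assignments are both the unknot.

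The one technical point I expect to require care is the passage from a combinatorial balanced sub-arc to an honest planar connect-sum decomposition, namely that the two cut points bound a disc separating the two crossing groups; this is where planarity of the free knot diagram is used, and it is what guarantees the induction actually decomposes $F$ into genuine smaller free knot diagrams rather than merely splitting a Gauss code. Once that is in place, the bound $2n$ is clean and, as the trefoil computation in the introduction shows, sharp.
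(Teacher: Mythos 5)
Your proposal is correct, and it reaches the bound by a genuinely different counting route than the paper. The unknots themselves are produced identically: your descending assignment $D_b$ is exactly the paper's climb/downramp construction, realized by the same height-function picture. The divergence is in the counting. The paper travels in both directions, obtaining $4n$ climb/downramp pairs, and shows directly that the resulting unknot diagrams collide at most two-to-one by attaching to each its ``top track'' (the maximal all-overcrossing strand through the climb), which determines the diagram up to reversal; this gives $\geq 2n$ for every diagram at once, with no induction and nothing beyond the descending-diagram trick. You instead fix one orientation, characterize collisions exactly ($D_b = D_{b'}$ iff the sub-arc between the basepoints is balanced), and absorb the degenerate case into a connect-sum induction via the multiplicative count $u(F) = u(F_1)u(F_2) \geq 4n_1 n_2 \geq 2n$. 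This is sound and in fact yields more than the theorem --- an exact collision criterion, and a strictly stronger bound for composite diagrams --- but it is more expensive: it invokes primality of the unknot under connected sum (a nontrivial fact, e.g.\ via additivity of genus), and it needs the step you flagged, that a balanced sub-arc yields an honest planar connect sum. That step is true and closes as follows: work on $S^2$ and take a regular neighborhood $N$ of the balanced sub-arc $A$; the complementary arc meets $N$ only in two short end-segments, so the rest of it is connected, disjoint from $N$, and hence contained in a single complementary disc $V$, whose boundary circle meets the diagram transversally in exactly two points and separates the two crossing groups. (Your hypothesis that the basepoints lie on \emph{distinct} edges also guarantees $n_1, n_2 \geq 1$, since a crossing-free complementary arc would put both basepoints on the same edge, so the induction genuinely makes progress.) In short: your argument is correct and finer-grained; the paper's is more elementary and entirely self-contained.
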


\begin{proof}
By an `arc' of a free knot diagram, we mean a portion of the string containing no crossings (that is, a subset which is homeomorphic to an interval).  

Choose a point on an arc, and make it the highest point of the knot (which we now think of as coming out of the page).  Now choose a direction of travel, and force our path to travel downhill from there.  At the first crossing we come to, since it is directly connected to the highest point on the diagram, make the strand we are travelling along the  overstrand. Similarly, at every undefined crossing we encounter, make the current strand go over the other strand until we have returned to our starting segment.  The end of the segment we have now arrived at is the lowest point of the knot.  We call the arc containing our initial point the `climb;' the rest of the knot is the `downramp.' 

In our previous viewpoint, we looked down on the knot from above.  Rotate the knot so that we now look at it from the side.  From here, we see that the knot we created this way is isotopic to the unknot:  Since every height, other than the very top and bottom points, has exactly two points at that height (one from the climb, the other from the downramp), the identification with a circle is straightforward.  

An $n$-crossing free knot diagram has $2n$ distinct arcs, and as each arc has two directions to choose, we have $4n$ distinct climb/downramp pairs on the free knot diagram.  But this does not mean we have $4n$ combinatorially distinct unknot diagrams!  An easy way to tell apart the unknot diagrams is by their `top track:'  This is the largest strand containing of the climb and only overcrossings.  In other words, it stops just before passing through any crossing for the second time.  An unknot diagram may share its top track with at most one other unknot diagram -- this diagram would traverse the same top track in the opposite order.  

Thus, at least $\frac{4n}{2}=2n$ of the $2^n$ different knot diagrams associated to an $n$-crossing free knot diagram are diagrams of the unknot.
\end{proof}

\begin{definition}
A \emph{loop} in a knot diagram or a free knot diagram is a segment of the knot which starts and ends at the same crossing, and does not cross itself otherwise.  The \emph{length} of a loop is the number of crossings that segment passes through (with the crossing of origin/terminus only counting once.)  All lengths of loops in knot diagrams are odd numbers.
\end{definition}

A loop of length one in a knot diagram can be `straightened out' via Reidemeister relation I.   A loop of length one in a free knot diagram is `straightenable' in the sense that, once over/under information is assigned to that crossing, a Reidemeister I move will straighten it out regardless of how the over/under information is assigned.

\begin{theorem}
Resultant knot probability for a free or mixed knot diagram is invariant under the free first Reidemeister move (shown below).
\end{theorem}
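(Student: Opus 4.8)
The plan is to exhibit a two-to-one correspondence between the assignments of the original diagram $F$, which contains a length-one loop at some crossing $c$, and the assignments of the reduced diagram $F'$ obtained by performing the free RI move that straightens that loop, and then to check that this correspondence preserves knot type. First I would fix notation: let $F$ be the free (or mixed) knot diagram with $n$ free crossings, one of which, $c$, is the apex of a length-one loop, and let $F'$ be the diagram with $n-1$ free crossings obtained by straightening the loop. The free crossings of $F'$ are exactly the free crossings of $F$ other than $c$, so an assignment of $F$ is a choice at each of its $n$ free crossings, while an assignment of $F'$ is a choice at the remaining $n-1$.

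The key step uses the locality of the RI move together with the observation stated just before the theorem: once over/under data is assigned at $c$, a Reidemeister I move straightens the loop \emph{regardless} of which of the two choices was made at $c$. Because the move alters only a small disk around $c$ and leaves the rest of the diagram untouched, performing it carries each assignment of $F$ to the assignment of $F'$ that agrees on every crossing except $c$, and the resulting knot is identical for the two choices at $c$. Consequently each assignment $\alpha'$ of $F'$ is the image of exactly two assignments of $F$, namely the two choices at $c$ with all other crossings matching $\alpha'$, and these two diagrams together with $F'$-under-$\alpha'$ all represent the same knot.

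From here the probability statement is routine counting. Fix any knot $R$ and let $N'$ be the number of assignments of $F'$ producing $R$. Since every assignment of $F$ is a lift of a unique $F'$-assignment and produces $R$ if and only if that $F'$-assignment does, the number of assignments of $F$ producing $R$ is exactly $2N'$. The resultant knot probabilities are therefore $\frac{2N'}{2^n}$ for $F$ and $\frac{N'}{2^{n-1}}$ for $F'$, and these agree because $2^n = 2 \cdot 2^{n-1}$. The identical argument applies to a mixed knot diagram when the free RI move is performed at a free crossing, since only the free crossings contribute to the sample space of the randomization.

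The main obstacle, such as it is, is purely bookkeeping: one must verify that the locality of the RI move guarantees the two lifts of each $F'$-assignment genuinely yield the \emph{same} knot type, not merely knots sharing a given invariant, and that the correspondence accounts for every assignment of $F$ exactly once. Both facts follow directly from the definition of the free RI move and the straightenability of length-one loops noted above, so no genuinely hard step arises.
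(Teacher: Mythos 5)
Your proof is correct and follows essentially the same route as the paper's: a two-to-one correspondence between assignments of the diagram with the length-one loop and assignments of the straightened diagram, with both lifts of each assignment yielding the same knot, so the doubled count cancels against the doubled sample space. Your version simply spells out the bookkeeping (and the mixed-diagram case) more explicitly than the paper does.
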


\begin{proof}
Imagine two knot shapes, $S_1$ and $S_2$, which are the same except that one arc of $S_1$ is replaced by a strand containing a loop of length one in $S_2$.  For any assignment of crossings to $S_1$, the resulting knot is the same as the two knots which result from assigning crossings in the same way to $S_2$ and assigning the extra crossing, in the loop of length one, in either possible way.  As $S_2$ has twice as many knots in its family as $S_1$, the resulting knot percentages are the same.
\end{proof}

\begin{center}
\begin{tikzpicture}
\begin{scope} [xshift = -3cm]
\draw (-1.75, 1.5) node {RI};

\draw[thick] (-3,0.75) -- (-3, -0.75);

\draw [thick] (-2,-.5) .. controls (-1.4, .4) .. (-1,0.45);
\draw [thick] (-2, .5) .. controls (-1.4, -.4) .. (-1, -0.45);

\draw [thick, <->] (-2.75,0) -- (-2.25,0);

\draw [thick] (-1, -0.45) .. controls (-0.6, -.45) .. (-.575, 0);
\draw [thick] (-1, 0.45) .. controls (-0.6, .45) .. (-.575, 0);
\end{scope}

\draw (0, 1.5) node {RIII};

\draw [thick] (-2.5,-1) -- (-.5,1);
\draw [thick] (-.5,-1) -- (-2.5,1);

\draw [thick, <->]  (-0.25, 0) -- (.25,0);

\draw [thick] (0.5,-1) -- (2.5,1); 
\draw [thick] (0.5,1) -- (2.5, -1);

\draw [thick] (-1.575, 1) -- (-1.98, 0.63);
\draw [thick] (-2.15, 0.5) .. controls (-2.75, 0) .. (-2.15, -0.5);
\draw [thick] (-1.575, -1) -- (-1.98, -0.63);

\draw [thick] (1.575, 1) -- (1.98, 0.63);
\draw [thick] (2.15, 0.5) .. controls (2.75, 0) .. (2.15, -0.5);
\draw [thick] (1.575, -1) -- (1.98, -0.63);

\end{tikzpicture}
\end{center}

\begin{theorem}
Resultant knot probability, for a mixed knot diagram, is invariant under the above version of the third Reidemeister move.\end{theorem}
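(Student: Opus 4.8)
The plan is to mimic the bijection argument used for the free first Reidemeister move, but now exploiting the fact that a genuine third Reidemeister move is valid no matter how the ``free'' crossing in its interior is resolved. Write $M_1$ and $M_2$ for the two mixed knot diagrams appearing on the left and right of the figure; they are identical outside the small disk in which the move takes place. Inside that disk, the only free crossing is the central crossing where the two ``through'' strands meet, while the two crossings made by the third strand are genuine crossings at which the third strand passes \emph{under} both through strands. Since $M_1$ and $M_2$ have exactly the same free crossings (the central one together with all the free crossings in the unchanged part of the diagram), I would first set up the obvious bijection between assignments of $M_1$ and assignments of $M_2$: resolve the central crossing and every other free crossing in precisely the same way on both sides.

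Next I would fix an arbitrary assignment and compare the two resulting genuine diagrams $D_1$ and $D_2$. After resolving, all three crossings inside the disk are genuine: the central crossing carries whatever over/under data we just chose, and the third strand still passes under both through strands. This is exactly the configuration of an ordinary third Reidemeister move, with the third strand playing the role of the strand lying entirely below the other two. Sliding this understrand across the central crossing carries $D_1$ to $D_2$ by a genuine RIII move, so $D_1$ and $D_2$ represent the same knot. As this holds for every assignment, and the bijection matches assignments one-to-one, every knot $R$ is produced by $M_1$ and by $M_2$ the same number of times; since $M_1$ and $M_2$ have equally many assignments, the resultant knot probabilities agree.

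The step that needs care --- and the one I expect to be the only real content --- is verifying that the genuine RIII move is available for \emph{both} resolutions of the central crossing, not merely one. This is precisely where the mixed setup matters: the central crossing is free, so the argument must survive both choices of its over/under data. The point is the standard topological fact that when one strand lies entirely beneath the other two near their crossing, it can be isotoped across that crossing regardless of which of the other two is on top; the over/under relation at the central crossing plays no role in the ambient isotopy that pushes the lower strand from one side to the other. Once this is observed, the two cases are handled uniformly and the bijection closes. I would also note explicitly that the two genuine crossings are preserved as understrand crossings throughout the move, so that no stray first- or second-Reidemeister complications intrude.
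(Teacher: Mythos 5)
Your proof is correct and takes essentially the same approach as the paper's: the paper's (much terser) argument likewise rests on the observation that the genuine RIII slide of the fully-under strand is available for either resolution of the free center crossing, combined implicitly with the same assignment-by-assignment bijection used for the free RI move. Your closing remark about needing the two off-center crossings to be assigned compatibly is exactly the point the paper's proof emphasizes (and uses to explain the absence of a mixed RII move).
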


\begin{proof}
This version of Reidemeister III is similarly allowed because it can be applied no matter the flavor of the center crossing. The move requires the two off-center crossings to be compatible, which is not guaranteed if either of these crossings are free, so they must be assigned first. (This reasoning also explains why there is no mixed Reidemeister II relation.) 
\end{proof}

In the following theorems, these results can create essentially trivial counterexamples. For example when discussing upper bounds, the trefoil, whose unknot probability is $.75$, could have $n-3$ loops of length one introduced, thus creating a diagram with $n$ crossings and unknot probability $.75$. To prevent these problems, we define the following:

\begin{definition}
A \emph{minimal} free knot diagram of $n$ crossings is a free knot diagram which contains no loops of length one.
\end{definition}

\begin{theorem}
Let $K$ be a minimal free knot diagram with three or more crossings. Then $K$ has some assignment of crossings making the trefoil knot.  
\end{theorem}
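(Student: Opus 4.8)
The plan is to induct on the number of crossings $n$, using a face count to locate a small face and then either build a trefoil directly or reduce to a smaller diagram. For the base case $n=3$, a minimal three-crossing shape corresponds to a Gauss code on three symbols with no adjacent repeats, and a short check of the parity (realizability) condition shows that the only such planar shape is the standard trefoil shape in which all three crossings are mutually interleaved; assigning its crossings alternately yields the trefoil. For the inductive step I would first record a counting fact: viewing the shape as a $4$-valent planar graph with $n$ vertices, it has $2n$ edges and hence $n+2$ faces by Euler's formula, so the face degrees sum to $4n < 4(n+2)$ and some face has degree at most $3$. Minimality rules out degree-one faces, so there is always either a bigon face or a triangular face to work with.

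\emph{Triangle case.} Three crossings $c_0,c_1,c_2$ bound an empty triangular region, with each crossing contributing one ``through'' strand transverse to the triangle. This is exactly the local picture at the centre of a standard trefoil diagram. I would assign $c_0,c_1,c_2$ so that the triangle's boundary strand realizes the alternating $(2,3)$-torus pattern, and assign every remaining crossing by the descending rule used in the proof of the unknotting theorem above, so that the complement of the triangle is forced to be a trivial tangle that closes the core up within the plane. Because the shape is a single component, the way the three through-strands reconnect is determined, and the resulting knot is the trefoil.

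\emph{Bigon case.} Two crossings bound an empty bigon, i.e.\ an $R_{II}$ configuration. I would assign them compatibly so that an $R_{II}$ move cancels both, producing a shape $K'$ with $n-2$ crossings; deleting any length-one loops this creates by $R_I$ moves (legitimate since, by the first-Reidemeister invariance theorem above, the set of producible knots is unchanged) yields a minimal shape $K''$. If $K''$ still has at least three crossings, the inductive hypothesis provides an assignment of $K''$ producing a trefoil, and combining it with the cancelling $R_{II}$ assignment together with arbitrary choices at the deleted crossings produces a trefoil from $K$.

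The hard part will be twofold. First, I must make precise the claim in the triangle case that the descending assignment forces the complement to become a trivial tangle whose closure of the trefoil core is again a trefoil; this is a relative version of the descending-diagram-is-the-unknot argument and is the genuinely delicate step. Second, I must control where the bigon induction bottoms out, namely guarantee that when no triangular face is present the bigon reduction cannot collapse below three crossings. This requires checking the finitely many small shapes directly --- and here it is reassuring that the figure-eight shape, the natural worry at $n=4$, already contains a triangular face and so is handled by the first case rather than by reduction.
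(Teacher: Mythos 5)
Your overall architecture --- Euler-formula face count, bigon/triangle dichotomy, induction on the crossing number --- is genuinely different from the paper's proof, which works with \emph{loops} (subarcs returning to their starting crossing, a different object from a triangular face: a face boundary is made of three distinct strands, while a loop is one subarc), locates a loop of length three, and inducts on loop length, shortening a long loop by two by assigning a transversal strand to pass over everything and extracting it with Reidemeister II and mixed Reidemeister III moves. But your triangle case contains a genuine gap: the claim that single-componentness determines how the strands reconnect externally is false. Each of the three strands through a triangular face contains one triangle edge and passes through two corner crossings, so there are six boundary endpoints; connectedness only forces the three strands to be joined cyclically by the three external arcs, one arc between each pair of strands. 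Which end of one strand meets which end of the next remains free, and different matchings close the assigned triangle into different knots. In particular, if the exterior joins the two ends that emerge at the \emph{same} corner, then once you trivialize the exterior by your descending rule that corner's crossing becomes a removable kink, and the two remaining crossings form a two-crossing diagram: all eight assignments of the triangle then give the unknot, so ``alternate the triangle, descend the rest'' produces no trefoil whatsoever for such shapes --- and these occur as minimal diagrams as soon as the capping arc carries extra crossings. This is precisely why the paper's proof, at the analogous point, splits into connectivity sub-cases ($A$ to $D$ with $B$ to $C$, versus $A$ to $C$ with $B$ to $D$) and treats each separately; your triangle case needs the same analysis, and in the bad matchings one cannot simply trivialize the whole exterior.

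The bigon case also has an unresolved step that is larger than you acknowledge. You propose to verify by a finite check of small shapes that, absent a triangular face, the reduction cannot collapse below three crossings; but the collapse is not confined to small $n$. Cancelling a bigon creates a length-one loop whenever the original diagram has a loop of length three running through the bigon's two crossings, and such loops need not bound faces, so their absence is not implied by the absence of triangular faces. The resulting kink removals can cascade, so at any $n$ a particular bigon cancellation may bottom out below three crossings, and you would then have to show that some other bigon or triangle in the same diagram succeeds --- which your induction as stated does not provide. (Your base case $n=3$ is fine: the Gauss parity check does single out the trefoil shape as the unique minimal three-crossing shape. Your remark about the figure-eight shape at $n=4$ is also correct, but the $n=4$ step requires that \emph{every} minimal four-crossing knot shape contain a triangular face, which you assert rather than verify.) By contrast, the paper's loop-length induction has no bottoming-out problem at all: loop lengths are odd, length one is excluded by minimality, and shortening by two therefore terminates at a loop of length exactly three.
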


\begin{proof}
Suppose $K$ has a loop of length three.  Zoom in on this loop (drawn in red), which has one of two possible forms:

\begin{center}
\begin{tikzpicture}
\begin{scope}
	\draw [dashed] (0,0) circle (1cm);
	\draw [thick] (0,1) .. controls (-.2,.5) .. (-.5,0);
	\draw [thick, red] (-.5,0) .. controls (-.7,-.5) and (-.3,-.5).. (0,-.5) arc (-90:90:.5cm) .. controls (-.3,.5) and (-.7,.5) .. (-.5,0);
	\draw [thick] (-.5,0) .. controls (-.2,-.5) .. (0,-1);
	\node at (0,1.2) {$A$};
	\node at (0,-1.2) {$B$};
\end{scope} 
\begin{scope}[xshift = 3 cm]
	\draw [dashed] (0,0) circle (1cm);
	\draw [thick] (-.72,.72) .. controls (-.5,0) .. (-.5,0);
	\draw [thick, red] (-.5,0) .. controls  (-.3,-.4) and (-.2,-.5) .. (.2,-.5) arc (-90:90:.5cm) .. controls (-.2,.5) and (-.3,.4) .. (-.5,0);
	\draw [thick] (-.5,0) .. controls (-.5,0) ..  (-.72, -.72);
	\draw [thick, blue] (.72,.72) .. controls (.3,0) .. (.72, -.72);
	\node at (.92,.92) {$A$};
	\node at (.92,-.92) {$B$};
	\node at (-.92,-.92) {$C$};
	\node at (-.92,.92) {$D$};
\end{scope}
\end{tikzpicture}
\end{center}

In the first case, turn $K$ into a mixed knot by assigning the crossings that connect $A$ to $B$ outside of the highlighted disk, in a way that makes that outer loop unknotted:  Make $A$ the high point, decreasing monotonically to the low point $B$.  At any self-crossing, assign the higher strand to go over the lower strand.

Once $A$ and $B$ are externally trivially connected, it is straightforward to assign crossings to get the trefoil knot:  traveling within the disk along the strand from $A$, assign the crossings so the current strand goes over, then under, then over.  The result is a trefoil knot.

In the second case, there are two sub-cases to consider.  Outside of the highlighted disk, $A$ can either connect to $D$ or $C$.  (If $A$ connected to $B$, we would have a link, not a knot.)  

If $A$ connects to $D$, and $B$ connects to $C$, then we unknot the external strands from each other and themselves, by making the $A$-$D$ and $B$-$C$ strands downramps, and assigning blended crossings so that $A$-$D$ always goes above $B$-$C$.   First, assign any crossings of the $A$-$D$ and $B$-$C$ strands so that the $A$-$D$ strand is always above the $B$-$C$ strand.  Now, make $A$ and $B$ the highs, and $C$ and $D$ the lows, of their individual external strands, and at any self-crossing, use the height to determine which strand goes over which.

Similarly, if $A$ connects to $C$ and $B$ connects to $D$, make the $A$-$C$ and $B$-$D$ strands downramps, and assign the blended crossings so that $A$-$C$ always goes above $B$-$D$.  

Now, it is straightforward to separate and untangle the external strands, so that the knot looks like one of these:

\begin{center}
\begin{tikzpicture}
	\draw [dashed] (0,0) circle (1cm);
	\draw [thick] (-.72,.72) .. controls (-.5,0) .. (-.5,0);
	\draw [thick] (-.5,0) .. controls  (-.3,-.4) and (-.2,-.5) .. (.2,-.5) arc (-90:90:.5cm) .. controls (-.2,.5) and (-.3,.4) .. (-.5,0);
	\draw [thick] (-.5,0) .. controls (-.5,0) ..  (-.72, -.72);
	\draw [thick] (.72,.72) .. controls (.3,0) .. (.72, -.72);
	\draw [thick] (.72,.72) .. controls (1.1,1.3) and (-1.1,1.3) .. (-.72, .72);
	\draw [thick] (.72,-.72) .. controls (1.1,-1.3) and (-1.1,-1.3) .. (-.72, -.72);
\end{tikzpicture}
 \quad \quad \quad
\begin{tikzpicture}
	\draw [dashed] (0,0) circle (1cm);
	\draw [thick] (-.72,.72) .. controls (-.5,0) .. (-.5,0);
	\draw [thick] (-.5,0) .. controls  (-.3,-.4) and (-.2,-.5) .. (.2,-.5) arc (-90:90:.5cm) .. controls (-.2,.5) and (-.3,.4) .. (-.5,0);
	\draw [thick] (-.5,0) .. controls (-.5,0) ..  (-.72, -.72);
	\draw [thick] (.72,.72) .. controls (.3,0) .. (.72, -.72);
	\draw [thick] (.72,.72) .. controls (0.8,1.0) .. (60:1.3cm) arc (60:220:1.3cm) .. controls  (-0.8,-1.0) .. (-.72, -.72);
	\draw [thick] (.72,-.72)  .. controls (0.9,-1.1) .. (-60:1.5cm) arc (-60:-220:1.5cm) .. controls  (-0.9,1.1) .. (-.72, .72);
\end{tikzpicture}
\end{center}
Both are easily made into the trefoil:  For the first one, starting at 12 o'clock on the external strand and travelling clockwise, make that strand pass over, then under, then over the other strands it encounters.  For the second one, again starting at 12 o'clock on the external strand and travelling clockwise, make that strand pass over, then under, then under, then over the other strands it encounters.

Similarly, in the sub-case where $A$ connects to $C$ and $B$ connects to $D$, we unknot the two strands from each other and then eliminate any self-crossings until we again reach the above base structure that can easily be transformed into the trefoil.

Now suppose $K$ only has loops of length greater than 3. 
By induction on the length of a loop, we will show that any knot has a resolution that is the trefoil.

\begin{center}
\begin{tikzpicture}
	\draw [dashed] (0,0) circle (1cm);
	\draw [thick] (-.72,.72) .. controls (-.5,0) .. (-.5,0);
	\draw [thick, red] (-.5,0) .. controls  (-.3,-.4) and (-.2,-.5) .. (.2,-.5) arc (-90:90:.5cm) .. controls (-.2,.5) and (-.3,.4) .. (-.5,0);
	\draw [thick] (-.5,0) .. controls (-.5,0) ..  (-.72, -.72);
	\draw [thick, green] (.72,.72) .. controls (.5, .5) .. (.45, .25);
	\draw [thick, green] (.72,-.72) .. controls (.5, -.5) .. (.45, -.25);
	
	\draw [thick, green] (.4,.9) .. controls (.25, .7) ..  (.2, .4);
	\draw [thick, green] (.4,-.9) .. controls (.25,-.7) .. (.2, -.4);
	
	\draw [thick, green] (-.4,.9) .. controls (-.2,.6) .. (-.1, .3);
	\draw [thick, green] (-.4,-.9) .. controls (-.2,-.6) .. (-.1, -.3);
	\draw [semithick, dashed] (0.15,0) circle (0.4 cm);
	\node at (-.92,-.92) {$A$};
	\node at (-.92,.92) {$B$};
\end{tikzpicture}
\end{center}

The inductive step is to assign crossings and perform isotopies to shorten the length of a loop by two.
Do this by picking any green strand at the boundary, declaring that location the high point of a downramp which goes through the highlighted disk, and travelling along that strand, assigning every free crossing encountered to have the chosen green strand go over the other strand, until the strand leaves the disk.
The strand thus assigned passes above all other strands on the interior of the disk.  
Thus, by a combination of Reidemeister II and mixed Reidemeister III moves, it can be isotoped away from the loop, shortening the length of the loop by two.

In this way, any free knot diagram with a loop of length more than three has some mixed knot in its family with a loop of length three, thus producing a trefoil knot.
\end{proof}

It is worth noting that a careful analysis of the number of trefoils produced in this way would, indirectly, give an upper bound on the unknot probability. 

In the following results, we consider connected sums of free knot diagrams.  As connected sum is an operation on knots, and its well-definedness on knot diagrams relies on the Reidemeister moves, we shouldn't expect it to be well-defined on free knots.   However, if we are concerned with which knots arise as assignments of a given free knot, then there is no problem:  once crossings are assigned, Reidemeister moves are allowed.  Thus, in Theorem \ref{knotsum1} through Consequence \ref{knotsum2}, we simply refer to a connected sum without worrying about which one is meant.  

\begin{theorem}\label{knotsum1}
Suppose $K_1$ and $K_2$ are knots with shapes $S_1$ and $S_2$ that are components of a connected sum $S_1 \# S_2$. Let $K_2'$ be the mirror image of $K_2$. Then the resultant knot probability of $S_1 \# S_2$ is the same as of a connected sum $S_1 \# S_2'$.
\end{theorem}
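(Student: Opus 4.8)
The plan is to prove the stronger statement that $S_1 \# S_2$ and $S_1 \# S_2'$ produce \emph{the same multiset} of resultant knots; equality of resultant knot probabilities (including after we identify each knot with its mirror image, as is done throughout the paper) is then immediate. First I would record the bookkeeping of the connected sum. Since forming $S_1 \# S_2$ joins an arc of $S_1$ to an arc of $S_2$ and creates no new crossing, its crossings are exactly the disjoint union of those of $S_1$ and of $S_2$; hence an assignment of $S_1 \# S_2$ is the same data as a pair $(a_1,a_2)$ of assignments of $S_1$ and of $S_2$, and there are $2^{n_1}\cdot 2^{n_2}=2^{n_1+n_2}$ of them, where $n_i$ is the number of crossings of $S_i$. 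For such a pair the resolved diagram is the diagrammatic connected sum of the two resolved summands, so its knot type is a connected sum of $[S_1,a_1]$ and $[S_2,a_2]$, where $[S_i,a_i]$ denotes the knot obtained by resolving $S_i$ according to $a_i$.

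Next I would exhibit a crossing-switching involution. Let $\Phi$ be the map on assignments of $S_1 \# S_2$ that switches the over/under choice at every crossing lying in the $S_2$ summand and leaves the $S_1$ crossings untouched; on pairs, $\Phi(a_1,a_2)=(a_1,\bar a_2)$, where $\bar a_2$ reverses every crossing of $a_2$. This is a bijection (indeed an involution). Reversing every crossing of a diagram produces a diagram of its mirror image, so $[S_2,\bar a_2]=[S_2,a_2]'$; because the switch is confined to one summand, the knot resolved from $\Phi(a_1,a_2)$ is a connected sum of $[S_1,a_1]$ with the mirror $[S_2,a_2]'$. As $\Phi$ merely permutes the assignments, the multiset of resultants of $S_1 \# S_2$ is unchanged when we replace each by its $\Phi$-image; that is, it equals the multiset $\{\,[S_1,a_1]\#[S_2,a_2]'\,\}$ taken over all pairs $(a_1,a_2)$.

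It remains to recognize this last multiset as that of $S_1 \# S_2'$. The shape $S_2'$ of the mirror $K_2'$ is the reflection of $S_2$, and reflecting the plane sets up a bijection between assignments of $S_2'$ and assignments of $S_2$ under which each resolved knot is sent to its mirror image; thus the summands contributed by the $S_2'$ factor run over exactly $\{[S_2,a_2]'\}$. Combining with the unchanged $S_1$ factor, the resultants of $S_1 \# S_2'$ form precisely the multiset $\{\,[S_1,a_1]\#[S_2,a_2]'\,\}$ from the previous paragraph. The two multisets agree, so the resultant knot probabilities agree, and in particular they agree after passing to the quotient by mirror image used throughout the paper.

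The step I expect to require the most care, and which I would treat as the crux, is the claim in the first paragraph that resolving an assigned connected-sum diagram yields the connected sum of the separately resolved summands: one must check that the joining band localizes the simplifications, so that Reidemeister moves applied inside one summand never interact with the other. A secondary point to address is that connected sum of knots is only well defined up to the usual orientation ambiguity; this is harmless here because the identical (possibly ambiguous) operation appears on both sides of the desired equality, and both the involution $\Phi$ and the reflection bijection are defined purely at the level of diagrams, before any such choice is made.
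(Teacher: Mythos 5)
Your proposal is correct, but it takes a genuinely different route from the paper. The paper's proof never looks at assignments at all: it takes the mirror $K_2'$ to be obtained by switching every crossing of $K_2$ while keeping the identical underlying projection, so the shape of $K_2'$ is \emph{literally the same free knot diagram} as $S_2$; hence $S_1 \# S_2$ and $S_1 \# S_2'$ are the same free knot diagram, and the equality of resultant knot probabilities is, in the paper's words, trivial. You instead interpret $S_2'$ as the planar reflection of $S_2$ (a possibly different planar graph) and compensate with two bijections: the crossing-switching involution $\Phi$ on assignments of the $S_2$ summand, using the fact that reversing all crossings of a diagram yields its mirror, and the reflection bijection matching assignments of $S_2'$ with assignments of $S_2$. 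This costs more work but buys more: your argument establishes the stronger statement that the two free diagrams produce the same \emph{multiset} of resultants via an explicit assignment-level correspondence, and it is insensitive to which convention for the mirror (crossing switch versus planar reflection) one adopts, whereas the paper's one-line proof is tied to the crossing-switch convention under which the two shapes coincide outright. Your flagged ``crux'' --- that resolving an assigned connected-sum diagram yields the connected sum of the separately resolved summands --- is in fact unproblematic: the assigned diagram is by construction a diagrammatic connected sum, which represents the connected sum of the two knot types by definition, with no localization of Reidemeister moves needed; this is also implicitly how the paper (and its subsequent Theorems on connected sums) treats the matter.
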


\begin{proof}
The difference between $K_2$ and $K_2'$ is that every crossing in $K_2$ has the opposite flavor of the corresponding crossing (which has the exact same connections to all other crossings) in $K_2'$.  Since the starting point of the randomization is the crossingless free knot diagram, the information of what is over and what is under is destroyed. Both connected sums form the same free knot diagram, thus their resultant knot probability is now trivially the same.
\end{proof}

\begin{theorem} \label{knotsumunk}
Suppose $K_1$ and $K_2$ are knots with shapes $S_1$ and $S_2$ that are components of a connected sum $S_1 \# S_2$. Then the probability of getting an unknot resultant $U$ from $S_1 \# S_2$ is $P(S_1\rightarrow U)P(S_2 \rightarrow U)$.
\end{theorem}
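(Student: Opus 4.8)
The plan is to exploit the fact that the crossings of $S_1 \# S_2$ are precisely the disjoint union of the crossings of $S_1$ and those of $S_2$, so that an assignment of the connected sum is nothing more than a pair consisting of an assignment of $S_1$ and an assignment of $S_2$. Writing $n_1$ and $n_2$ for the crossing numbers of $S_1$ and $S_2$, the connected sum has $n_1 + n_2$ crossings and hence $2^{n_1+n_2} = 2^{n_1}\cdot 2^{n_2}$ assignments, and this product structure is what drives the factorization.

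First I would observe that, since the connected sum of diagrams is formed by cutting one arc in each of $S_1$ and $S_2$ and splicing the results together, applying a fixed assignment $a_1$ to the $S_1$-crossings and $a_2$ to the $S_2$-crossings produces exactly the connected sum $K_1(a_1) \# K_2(a_2)$ of the two individually-assigned resultants. Here no crossing of one summand ever interacts with a crossing of the other, so the two assignments are genuinely independent and the map $(a_1,a_2) \mapsto K_1(a_1)\# K_2(a_2)$ is well-defined on the $2^{n_1+n_2}$ assignments of $S_1 \# S_2$.

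The key input is the classical fact that a connected sum of knots is the unknot if and only if \emph{both} summands are the unknot. This follows from additivity of the Seifert genus, $g(K_1 \# K_2) = g(K_1) + g(K_2)$, together with the characterization of the unknot as the unique knot of genus zero; equivalently, one may cite that the knot monoid under connected sum has no nontrivial invertible elements. Granting this, an assignment $(a_1, a_2)$ yields the unknot $U$ precisely when $K_1(a_1) = U$ \emph{and} $K_2(a_2) = U$.

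Then I would simply count. The number of assignments of $S_1 \# S_2$ giving the unknot equals the number of pairs $(a_1,a_2)$ with both coordinates unknotting, which is the product of the number of unknotting assignments of $S_1$ with that of $S_2$. Dividing this product by $2^{n_1}\cdot 2^{n_2}$ separates cleanly into the two factors $P(S_1 \to U)$ and $P(S_2 \to U)$, giving the claim. The main obstacle is not combinatorial but topological: the entire argument hinges on the genus-additivity (no nontrivial inverses) theorem for connected sums, which is exactly what rules out the possibility of two nontrivially knotted summands cancelling to produce an unknot.
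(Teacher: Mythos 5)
Your proposal is correct and takes essentially the same approach as the paper: factor each assignment of $S_1 \# S_2$ into a pair of independent assignments of $S_1$ and $S_2$, invoke the fact that a connected sum is the unknot only if both summands are, and multiply the counts. If anything, your justification of that key topological fact via Seifert-genus additivity is more careful than the paper's, which asserts (somewhat imprecisely, since the unknot is conventionally excluded from the prime knots) that ``the unknot is a prime knot and can only be created by the knotsum of two unknots.''
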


\begin{proof}
In the process of determining the resultants of $S_1 \# S_2$, we can shrink one of the components -- without loss of generality $S_1$ -- to be as small as possible, so that $S_2$ is simplified without changing $S_1$ (think of $S_1$ as a bead that may move around $S_1$ as it isotopes). The unknot is a prime knot and can only be created by the knotsum of two unknots, thus $S_1$ and $S_2$ must be simultaneously, and independently, assigned to produce the unknot. Then the resultant probability is the product of the individual resultant unknot probabilities for each of the components.
\end{proof}

\begin{corollary}
Suppose $K$ is a knot with a shape $S$ that is a component of distinct connected sums with the shapes of the trefoil and the figure eight. Then $P(S \# 3_1 \rightarrow U) = P(S \# 4_1 \rightarrow U)$.
\end{corollary}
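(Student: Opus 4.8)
The plan is to reduce the claimed equality to a statement about the two fixed shapes alone and then settle that by a finite computation. By Theorem \ref{knotsumunk}, the unknot probability of a connected sum factors as a product, so
$$P(S \# 3_1 \to U) = P(S \to U)\,P(3_1 \to U), \qquad P(S \# 4_1 \to U) = P(S \to U)\,P(4_1 \to U).$$
The common factor $P(S\to U)$ cancels (it is the same number in both equations because $S$ is the same shape), so it suffices to prove $P(3_1\to U)=P(4_1\to U)$. In other words, this corollary is really the assertion that the trefoil shape and the figure-eight shape happen to have equal resultant unknot probabilities, and the particular $S$ plays no role beyond contributing the cancelling factor.

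For the trefoil shape this value is already recorded: the worked simplification in Section \ref{background} shows that of the $2^3=8$ assignments of the free trefoil, six are unknots and two are trefoils, so $P(3_1\to U)=6/8=3/4$. It then remains only to show $P(4_1\to U)=3/4$ as well, and I would do this by directly classifying the $2^4=16$ assignments of the figure-eight shape. A convenient model is to realize that shape as the (Markov) closure of the free four-crossing three-strand word $\sigma_1\sigma_2\sigma_1\sigma_2$, so that each assignment is a sign choice $\sigma_1^{\varepsilon_1}\sigma_2^{\varepsilon_2}\sigma_1^{\varepsilon_3}\sigma_2^{\varepsilon_4}$ with $\varepsilon_i\in\{\pm1\}$. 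Each resulting closure is then identified using only the braid relation $\sigma_1\sigma_2\sigma_1=\sigma_2\sigma_1\sigma_2$, free cancellation $\sigma_i\sigma_i^{-1}=1$ (a mixed Reidemeister II), and Markov destabilization of a lone $\sigma_2^{\pm1}$; together these collapse every word to a two-strand power $\sigma_1^{k}$ whose closure is already known. Reversing all four signs mirrors the knot, and since we do not distinguish mirror images this halves the casework. The outcome is that only the two "all-same" words close to trefoils and only the two perfectly alternating words $(\sigma_1\sigma_2^{-1})^2$ and $(\sigma_1^{-1}\sigma_2)^2$ close to figure eights, leaving $16-2-2=12$ unknots; hence $P(4_1\to U)=12/16=3/4=P(3_1\to U)$, which proves the corollary.

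The only genuine work is the sixteen-fold case check in the last step, and the thing to get right there is the bookkeeping: verifying that every sign pattern other than the two all-same and two perfectly-alternating ones really does destabilize down to an unknotted $\sigma_1^{\pm1}$, rather than sneaking back up to a trefoil or figure eight. This is routine but error-prone, so I would organize it by the number of sign changes in the cyclic word (which controls how many $\sigma_i\sigma_i^{-1}$ cancellations become available after the braid relation is applied) and, as an independent check, compare the final count against the figure-eight shape's entry in the appendix table, which should likewise read $3/4$.
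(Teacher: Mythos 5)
Your proposal is correct and follows essentially the same route as the paper: both apply the product formula of Theorem \ref{knotsumunk} to reduce the claim to the equality $P(3_1 \rightarrow U) = P(4_1 \rightarrow U) = \frac{3}{4}$. The only difference is that where the paper simply quotes the figure-eight value $\frac{12}{16}$ as known (it appears in the appendix table), you verify it from scratch via the $16$-case braid-word computation on $\sigma_1^{\varepsilon_1}\sigma_2^{\varepsilon_2}\sigma_1^{\varepsilon_3}\sigma_2^{\varepsilon_4}$, which checks out (two all-same patterns give trefoils, the two alternating patterns give figure eights, and the remaining twelve destabilize to unknots) and makes the corollary self-contained.
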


\begin{proof}
The knot shape $S$ has some probability of resulting in the unknot $P(S \rightarrow U)$ while the trefoil and figure eight have the same known probability $P(3_1 \rightarrow U)=\frac{6}{8}=0.75$ and $P(4_1 \rightarrow U) = \frac{12}{16} = 0.75$. Then from Theorem \ref{knotsumunk}, the probability of getting the unknot as a resultant from either connected sum is the same.
\end{proof}

\begin{theorem} \label{knotsumpr}
Suppose $K_1$ and $K_2$ are knots with shapes $S_1$ and $S_2$ that are components of a connected sum $S_1 \# S_2$. Then the probability of getting a resultant nontrivial prime $K_3$ from $S_1 \# S_2$ is $P(S_1\rightarrow U)P(S_2 \rightarrow K_3)+P(S_1 \rightarrow K_3)P(S_2 \rightarrow U)$.
\end{theorem}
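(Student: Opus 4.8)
The plan is to combine the independence of crossing assignments across the two connected-sum components with the uniqueness of prime knot decomposition. First I would observe, exactly as in the proof of Theorem \ref{knotsumunk}, that the crossings of $S_1 \# S_2$ partition into those coming from $S_1$ and those coming from $S_2$, so that a random assignment to $S_1 \# S_2$ is equivalent to independent random assignments to each component. Moreover, since a connected sum is formed by joining the two diagrams along a trivial band, the resultant knot of any such assignment is the connected sum $J_1 \# J_2$, where $J_i$ denotes the resultant of the assignment restricted to $S_i$.

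The key structural input is Schubert's theorem on the uniqueness of prime decomposition of knots: every knot factors uniquely (up to reordering) as a connected sum of prime knots, and in particular a nontrivial prime knot $K_3$ cannot be written as a connected sum of two nontrivial knots. Consequently, if $J_1 \# J_2 = K_3$, then exactly one of $J_1, J_2$ is the unknot and the other is $K_3$. This yields two events: either $S_1 \rightarrow U$ and $S_2 \rightarrow K_3$, or $S_1 \rightarrow K_3$ and $S_2 \rightarrow U$. These events are mutually exclusive, because $K_3$ is nontrivial and hence $K_3 \neq U$, so a single assignment to $S_1$ cannot simultaneously produce both $U$ and $K_3$.

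Using the independence of the two assignments, the probability of each event factors as a product, and summing over the two disjoint cases gives
$$P(S_1 \# S_2 \rightarrow K_3) = P(S_1 \rightarrow U)\,P(S_2 \rightarrow K_3) + P(S_1 \rightarrow K_3)\,P(S_2 \rightarrow U),$$
as claimed. One bookkeeping point to address is the standing convention that a knot and its mirror image are identified: since $P(S_i \rightarrow K_3)$ already counts all resultants equal to $K_3$ up to mirror image, and $U \# K_3 = K_3$ irrespective of any mirror convention, this identification does not disturb the count.

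The main obstacle is the clean invocation of the prime decomposition theorem and the verification that no factorization of $K_3$ other than $U \# K_3$ (up to order and mirror image) is possible; once this is in hand, the remainder is the same independence argument already used to establish Theorem \ref{knotsumunk}.
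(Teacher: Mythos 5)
Your proposal is correct and follows essentially the same route as the paper's proof: both reduce a random assignment of $S_1 \# S_2$ to independent assignments of the components whose resultant is the connected sum of the component resultants, and both use primality of $K_3$ to rule out any factorization other than $U \# K_3$. You merely make explicit (via Schubert's unique prime decomposition and the mutual exclusivity of the two events) what the paper states informally, which is a welcome tightening but not a different argument.
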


\begin{proof}
If we assign the crossings of $S_2$ (after applying the shrinking procedure as in Theorem \ref{knotsumunk}) such that it becomes the unknot, then we are left with the free knot diagram of $S_1$, from which we can produce $K_3$. Thus, the probability of $K_3$ is at least $P(S_1 \rightarrow U)P(S_2  \rightarrow K_3)+P(S_1 \rightarrow K_3)P(S_2 \rightarrow U)$.

Further $K_3$ knots could only arise if some connected sum of $K_1$ and $K_2$ or any of their resultants could create $K_3$. The assumption that $K_3$ is prime makes this impossible, so the earlier sum calculates the resultant knot probability for $K_3$.
\end{proof}

\begin{theorem}
Suppose $K_1$ and $K_2$ are prime knots with shapes $S_1$ and $S_2$ that are components of a connected sum $S_1 \# S_2$. Then the probability of getting a resultant composite knot $K_3 \# K_4$ from $S_1 \# S_2$ is $P(S_1\rightarrow U)P(S_2 \rightarrow K_3 \# K_4)+P(S_1 \rightarrow K_3 \# K_4)P(S_2 \rightarrow U)+P(S_1 \rightarrow K_3)P(S_2 \rightarrow K_4)+P(S_1 \rightarrow K_4)P(S_2 \rightarrow K_3)$.
\end{theorem}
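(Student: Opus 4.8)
The plan is to follow the template of Theorems~\ref{knotsumunk} and~\ref{knotsumpr}: reduce to the two factor-diagrams via the shrinking procedure, then organize the casework using the uniqueness of prime knot decomposition.

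First I would observe that an assignment of crossings to $S_1 \# S_2$ is exactly a choice of assignment on $S_1$ together with an independent choice of assignment on $S_2$, since the connected-sum shape is obtained by splicing the two free diagrams along a crossingless arc, so that its crossing set is the disjoint union of those of $S_1$ and $S_2$. Running the ``bead'' shrinking argument from Theorem~\ref{knotsumunk}, each such assignment produces the resultant $A \# B$, where $A$ is the resultant on $S_1$ and $B$ that on $S_2$, with $A$ and $B$ ranging independently according to the distributions $P(S_1 \to \cdot)$ and $P(S_2 \to \cdot)$. This step does not actually use primality of $K_1$ or $K_2$; that hypothesis only fixes the framing of the connected sum.

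Next I would appeal to the uniqueness of prime decomposition of knots (Schubert). The prime summands of $A \# B$ are the multiset union of those of $A$ and of $B$, so, writing $K_3, K_4$ for the two (distinct) prime summands of the target, the event $A \# B = K_3 \# K_4$ occurs precisely when $K_3$ and $K_4$ are distributed between $A$ and $B$ in one of four ways: $(A,B) = (U,\, K_3 \# K_4)$, $(K_3 \# K_4,\, U)$, $(K_3,\, K_4)$, or $(K_4,\, K_3)$. Since $A$ takes a distinct value in each case, these four events are mutually exclusive; by independence each has probability equal to the product of the corresponding factor-probabilities, and summing the four products yields exactly the claimed formula.

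The step requiring the most care is this very case analysis, and it is exactly where unique decomposition does the work: it is what forbids $A$ and $B$ from splitting a single prime summand or from contributing additional primes, guaranteeing that the four cases are both disjoint and exhaustive (a resultant with the wrong number or type of prime factors simply cannot equal $K_3 \# K_4$). The one genuine subtlety I anticipate is the degenerate situation $K_3 = K_4$: then the two middle events collapse into the single event $(A,B) = (K_3, K_3)$, and the stated sum double-counts it, so the formula as written presupposes $K_3 \neq K_4$ (when they coincide, the two terms $P(S_1 \to K_3)P(S_2 \to K_4)$ must be merged into one). Everything else is routine once the reduction to independent factor-assignments and the factorization bookkeeping are in place.
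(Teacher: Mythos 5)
Your argument is correct and takes essentially the same route as the paper's proof: split the event according to how the composite is distributed between the two factors --- the two unknot--composite cases (reprising Theorem \ref{knotsumpr}) plus the two ways of assigning $K_3$ and $K_4$ to the components --- and multiply using independence of the two assignments after the shrinking reduction of Theorem \ref{knotsumunk}. You are more careful than the paper in two places: you explicitly invoke unique prime factorization of knots to certify that the four cases are disjoint and exhaustive (the paper leaves this implicit, much as its previous proof only gestures at why no ``further'' resultants arise), and you correctly observe that when $K_3 = K_4$ the two middle events coincide, so the stated formula double-counts that case and tacitly presupposes $K_3 \neq K_4$ --- a degeneracy neither the theorem statement nor the paper's proof addresses.
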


\begin{proof}
If the composite $K_3 \# K_4$ is a resultant of either of the components, the composition of the unknot and the desired resultant will produce the resultant, reproducing the same two terms from the above theorem. 

Since the resultant in question is itself a knotsum of two knots, then having each of the free components assigned to be the initial knot components of the knotsum will again produce $K_3 \# K_4$. This can be done by one of two ways, with a total probability $P(S_1 \rightarrow K_3)P(S_2 \rightarrow K_4)+P(S_1 \rightarrow K_4)P(S_2 \rightarrow K_3)$.
\end{proof}

\begin{definition}
A \emph{recursive sum $S^N$} of a knot $K$ with shape $S$ is a connected sum of $N$ copies of $S$. 
\end{definition}

\begin{theorem}
Let $K_1$ be a prime knot with a shape $S$. Then the resultant knot probability for a nontrivial prime $K_2$, $P(S^{N+1} \rightarrow K_2)$, is $P(S \rightarrow K_2)P(S \rightarrow U)^N + P(S \rightarrow U)P(S^N \rightarrow K_2)$.
\end{theorem}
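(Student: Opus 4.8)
The plan is to realize $S^{N+1}$ as the connected sum $S \# S^N$ and then feed this decomposition into the prime-resultant formula of Theorem~\ref{knotsumpr}, handling the one unknown probability that appears via the multiplicativity of Theorem~\ref{knotsumunk}.

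First I would note that, by the definition of the recursive sum, $S^{N+1} = S \# S^N$, where $S^N$ is the shape of the $N$-fold connected sum of $K_1$; this latter knot is a perfectly good knot, so it may serve as a summand. Since $K_2$ is a nontrivial prime knot, Theorem~\ref{knotsumpr} applies verbatim with $S_1 = S$ and $S_2 = S^N$ and target $K_3 = K_2$, yielding
\[
P(S^{N+1} \to K_2) = P(S \to U)\,P(S^N \to K_2) + P(S \to K_2)\,P(S^N \to U).
\]
The only quantity here not already in the desired form is $P(S^N \to U)$.

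Next I would compute $P(S^N \to U) = P(S \to U)^N$ by a short induction on $N$. The base case $N=1$ is a tautology, and writing $S^N = S \# S^{N-1}$ and applying Theorem~\ref{knotsumunk} gives $P(S^N \to U) = P(S \to U)\,P(S^{N-1}\to U)$, which closes the induction. Substituting this into the displayed equation produces
\[
P(S^{N+1} \to K_2) = P(S \to K_2)\,P(S \to U)^N + P(S \to U)\,P(S^N \to K_2),
\]
which is precisely the claimed identity (the two summands are simply listed in the opposite order).

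I do not expect a serious obstacle here: once Theorems~\ref{knotsumpr} and~\ref{knotsumunk} are in hand, the argument is bookkeeping. The only points that warrant a sentence of care are (i) confirming that the hypotheses of Theorem~\ref{knotsumpr} are genuinely met, namely that $S^N$ is the shape of an honest knot and that the primality of the \emph{target} $K_2$ is what licenses using the prime-resultant formula rather than the composite one, and (ii) making the induction for $P(S^N\to U)$ explicit. It is worth remarking that the stated primality of $K_1$ is not actually invoked in either cited theorem as used here; it merely frames the recursion as running over a prime building block and keeps $S^N$ in the regime the authors care about.
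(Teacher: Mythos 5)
Your proposal is correct and matches the paper's own argument essentially step for step: the paper also writes $S^{N+1}=S^N \# S$, applies Theorem~\ref{knotsumpr}, and then uses Theorem~\ref{knotsumunk} repeatedly (your explicit induction) to replace $P(S^N \rightarrow U)$ with $P(S \rightarrow U)^N$. Your side remarks---that the hypotheses of Theorem~\ref{knotsumpr} only require $K_2$ to be a nontrivial prime target, and that the primality of $K_1$ is never actually used---are accurate but do not change the substance.
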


\begin{proof}
Given $S^{N+1}=S^N \# S$, 
we apply the formula from Theorem \ref{knotsumpr} and get $P(S^N \rightarrow U)P(S \rightarrow K_2) + P(S^N \rightarrow K_2)P(S \rightarrow U)$.  Applying Theorem \ref{knotsumunk} $N$ times shows $P(S^N \rightarrow U)=P(S \rightarrow U)^N$.
\end{proof}

This motivates the definition, as we get a recursive formula for the resultant knot probability. The appropriate initial condition, for a generic shape $S$ and resultant $K$, is $P(S^0 \rightarrow K)=0$, as an unknotted loop, the identity of the connect sum operation, has no nontrivial resultants. This recovers the prime knot resultant probabilities for any resultant $K_2$ as the first connected sum, that of the prime knot $K_1$ with the unknot, has a probability
\begin{align*}
P(S^1 \rightarrow K_1) =& P(S \rightarrow K_2)P(S \rightarrow U)^0 + P(S \rightarrow U)P(S^0 \rightarrow K_2) \\
=& P(S \rightarrow K_2)
\end{align*}

We rewrite the recurrence relation for $P(S^{N+1} \rightarrow K_2)$ as 
$$x_{N+1} = \alpha^N \beta+ \alpha x_N$$
 using $\alpha = P(S \rightarrow U)$, $\beta = P(S \rightarrow K_2)$, and the previous step in the map $P(S^{N} \rightarrow K_2) = x_{N}$.

Whether $x_{N+1}$ is greater or less than $x_N$ depends on if the first term is large enough to make up for the value lost by the multiplication of $x_N$ by $\alpha<1$, $(1-\alpha)x_N$. Then the system increases in value if $\alpha^N \beta > (1-\alpha)x_N$, and decreases when $\alpha^N \beta < (1-\alpha)x_N$. Indeed, if there is no movement in the system, $x_{N+1} = x_N$, so we again get
$$ x_{N+1} = x_N = \alpha^N \beta+ \alpha x_N \implies x_N = \frac{\alpha^N \beta}{1-\alpha} .$$
 Thus, the behavior of the recurrence relation at some $N$th step is dependent on the value 
$$ \frac{P(S \rightarrow U)^N P(S \rightarrow K_2)}{1-P(S \rightarrow U)} .$$

\begin{theorem} \label{strict}
Let $K_1$ be a prime knot with  shape $S$ such that a prime $K_2$ is a resultant ($\beta \not = 0$). Then the sequence of resultant knot probability of $K_2$ from the recursive sum of $S$, $P(S^{N} \rightarrow K_2)$, strictly increases, then has a maximum value at two or fewer steps after which the probability strictly decreases for every additional connected sum.  
\end{theorem}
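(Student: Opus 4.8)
The plan is to solve the recurrence in closed form and then read off the monotonicity directly. Since the forcing term $\alpha^N\beta$ decays at exactly the same geometric rate $\alpha$ as the homogeneous part, I expect a ``resonance'' that introduces a linear factor of $N$. Indeed, unrolling $x_{N+1}=\alpha^N\beta+\alpha x_N$ from the initial condition $x_0=0$ gives
$$x_N=\sum_{j=0}^{N-1}\alpha^{(N-1)-j}\cdot(\alpha^j\beta)=\sum_{j=0}^{N-1}\alpha^{N-1}\beta=N\beta\alpha^{N-1}.$$
This closed form is confirmed by a one-line induction against the recurrence, so I would state it as the first step.

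With the closed form in hand, the whole theorem reduces to the sign of consecutive differences. Computing
$$x_{N+1}-x_N=\beta\bigl[(N+1)\alpha^N-N\alpha^{N-1}\bigr]=\beta\alpha^{N-1}\bigl(\alpha-N(1-\alpha)\bigr),$$
and noting that $\beta\alpha^{N-1}>0$ (here I use $\beta\neq 0$ from the hypothesis, $0<\alpha$ from the lower bound $\tfrac{2n}{2^n}$ on the unknot probability, and $\alpha<1$ because $K_2$ is a nontrivial resultant so not every assignment is the unknot), the sign of $x_{N+1}-x_N$ equals the sign of $\alpha-N(1-\alpha)$. Setting $N^{*}=\tfrac{\alpha}{1-\alpha}$, this is positive exactly when $N<N^{*}$, zero exactly when $N=N^{*}$, and negative exactly when $N>N^{*}$.

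The conclusion is then a trichotomy. For every integer $N$ with $0\le N<N^{*}$ the sequence strictly increases, and for every $N>N^{*}$ it strictly decreases; the only way to get equality $x_{N+1}=x_N$ is $N=N^{*}$, which can occur for at most one integer $N$ and only when $N^{*}$ itself is a positive integer. Hence if $N^{*}\notin\mathbb{Z}$ the maximum is attained at the single index $\lceil N^{*}\rceil$, while if $N^{*}\in\mathbb{Z}$ the maximum is the two-term plateau at $N^{*}$ and $N^{*}+1$; in either case the maximal value occurs at two or fewer consecutive steps, with a strict increase before and a strict decrease after. The increase from $x_0=0$ to $x_1=\beta$ is automatic, so the ``increasing then decreasing'' shape persists even when $N^{*}<1$, where the maximum simply sits at $N=1$.

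The computation is entirely elementary, so there is no deep obstacle. The only genuine subtlety is the resonant solution of the recurrence: forgetting the linear factor $N$ (for instance, guessing a purely geometric $x_N$) would yield the wrong monotonicity. The remaining care is the bookkeeping that isolates the single integer $N=N^{*}$ as the unique source of a plateau, which is exactly what pins the maximum down to ``two or fewer steps.''
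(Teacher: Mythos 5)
Your proof is correct, but it takes a genuinely different route from the paper's. The paper never solves the recurrence at this stage: it argues qualitatively from $x_{N+1}=\alpha^N\beta+\alpha x_N$ that a plateau ($x_{N+1}=x_N$) forces $x_{N+2}<x_{N+1}$, that a strict decrease propagates ($x_N>x_{N+1}$ implies $x_{N+1}>x_{N+2}$), and hence, contrapositively, that an increase at any step forces increases at all earlier steps; unimodality with a maximum over at most two consecutive steps follows from these propagation lemmas alone. You instead solve the recurrence in closed form, $x_N=N\beta\alpha^{N-1}$ (correctly handling the resonance between the forcing rate and the homogeneous rate, which is indeed the one place a naive geometric guess would go wrong), and read everything off the factorization $x_{N+1}-x_N=\beta\alpha^{N-1}\bigl(\alpha-N(1-\alpha)\bigr)$. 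Your route buys strictly more than the theorem asks: it pins the maximum at $\lceil N^{*}\rceil$ with $N^{*}=\alpha/(1-\alpha)$, with a two-term plateau exactly when $N^{*}$ is a positive integer. The paper only recovers this information later, proving the same closed form by a separate induction and deriving $N(\alpha)=\alpha/(1-\alpha)$ in Consequence \ref{knotsum2}; your single argument therefore subsumes Theorem \ref{strict}, Corollary \ref{firstMin}, and the subsequent theorem locating the maximum for $\alpha<\frac12$, $\frac12<\alpha<\frac23$, $\frac23<\alpha<\frac34$. What the paper's approach buys in exchange is economy: it needs no closed form, only sign bookkeeping on the recurrence. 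One detail you handled that the paper leaves implicit, and which is genuinely needed in both arguments, is $0<\alpha<1$: your justification ($\alpha>0$ from the $\frac{2n}{2^n}$ unknot lower bound, and $\alpha\le 1-\beta<1$ since $\beta\neq 0$) is the right one, and it also explains why the theorem's hypothesis should be read as requiring $K_2$ nontrivial.
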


\begin{proof}
Suppose after some first $N-1$ recursive sums of $S$, $P(S^{N} \rightarrow K_2)=P(S^{N+1} \rightarrow K_2)$. Then $x_{N+1} = x_{N} \implies (1-\alpha)x_{N+1}=(1-\alpha)x_n = \alpha^N \beta$. Since $\alpha<1$, $\alpha^{N+1} \beta < \alpha^N \beta = (1-\alpha)x_{N+1}$, so $x_{N+2} < x_{N+1}$.

Now suppose after some $N$ steps, $x_N > x_{N+1}$. Then $(1-\alpha)x_{N+1}=(1-\alpha)\left(\alpha^N \beta + \alpha x_N \right)$ and $\alpha^N \beta < (1-\alpha)x_N$. 

\begin{align*}
(1-\alpha)x_{N+1} &= (1-\alpha)\left(\alpha^N \beta + \alpha x_N \right) \\
&= (1-\alpha)\alpha^N \beta + \alpha (1-\alpha)x_N \\
&> (1-\alpha)\alpha^N \beta + \alpha (\alpha^N \beta) \\
&= \alpha^N \beta > \alpha^{N+1} \beta
\end{align*}

\noindent Thus $x_{N+1} > x_{N+2}$. 

Consequently, $x_{N+1}>x_N$ forces $x_{N} > x_{N-1}$.
\end{proof}

\begin{corollary} \label{firstMin}
Let $K_1$ be a prime knot with shape $S$. Then the resultant knot probability of $K_2$ from the recursive sum of $S$, $P(S^{N} \rightarrow K_2)$ has a maximum value at $N=1$ iff $P(S \rightarrow U) \leq 0.5$.
\end{corollary}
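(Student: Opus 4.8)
The plan is to reduce the entire question to a single comparison between the first two terms $x_1$ and $x_2$ of the recurrence, and then let Theorem \ref{strict} do the remaining work. Theorem \ref{strict} already guarantees that the sequence $x_N = P(S^N \to K_2)$ strictly increases up to a peak (possibly a plateau of two equal values) and strictly decreases thereafter. In particular, the peak is reached at $N=1$ precisely when the sequence fails to increase on its very first step, that is, precisely when $x_1 \geq x_2$. So the corollary will follow once I establish the equivalence $x_1 \geq x_2 \iff \alpha \leq \tfrac{1}{2}$, where $\alpha = P(S \to U)$.

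The computation is short. The initial condition derived just before the recurrence gives $x_1 = P(S^1 \to K_2) = \beta$, where $\beta = P(S \to K_2)$. Feeding $N=1$ into the recurrence $x_{N+1} = \alpha^N \beta + \alpha x_N$ yields $x_2 = \alpha\beta + \alpha x_1 = 2\alpha\beta$. Since $K_2$ is assumed to be a genuine resultant we have $\beta \neq 0$, so I may divide by $\beta$ in the inequality $\beta \geq 2\alpha\beta$ to obtain $x_1 \geq x_2 \iff 1 \geq 2\alpha \iff \alpha \leq \tfrac{1}{2}$.

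To close the argument I convert the condition $x_1 \geq x_2$ into the statement that the maximum occurs at $N=1$, leaning on the monotonicity structure supplied by Theorem \ref{strict}. If $x_1 > x_2$, the sequence is already strictly decreasing at the first step, so by the ``once decreasing, always decreasing'' half of that theorem the maximum is uniquely $x_1$. If instead $x_1 = x_2$ (the boundary case $\alpha = \tfrac{1}{2}$), the plateau part of Theorem \ref{strict} forces $x_3 < x_2$, so $x_1$ still realizes the maximum. Conversely, if $\alpha > \tfrac{1}{2}$ then $x_2 = 2\alpha\beta > \beta = x_1$, the sequence increases at step one, and by Theorem \ref{strict} its peak is attained at some $N \geq 2$; hence the maximum is not at $N=1$. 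Assembling these cases yields the stated biconditional.

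I expect the only delicate point to be the handling of the equality case $\alpha = \tfrac{1}{2}$: one must verify that a maximum is still declared ``at $N=1$'' even though $N=2$ ties it, which is exactly why the non-strict inequality $P(S \to U) \leq 0.5$, rather than a strict one, appears in the statement. Everything else is immediate once the monotonicity behavior of Theorem \ref{strict} is granted.
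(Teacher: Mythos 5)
Your proposal is correct and follows essentially the same route as the paper's own proof: compute $x_1 = \beta$ and $x_2 = 2\alpha\beta$, reduce the statement to the comparison $x_2 \leq x_1 \iff \alpha \leq \tfrac{1}{2}$, and invoke Theorem \ref{strict}'s unimodality to conclude. You are in fact slightly more thorough than the paper, which argues the algebra via the threshold $\frac{\alpha\beta}{1-\alpha}$ and leaves the converse direction and the tie case $x_1 = x_2$ implicit, whereas you spell both out (and rightly note that $\beta \neq 0$ is needed to divide).
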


\begin{proof}  Using the notation $\alpha$, $\beta$, $x_N$  as above, we have 
$$\alpha \leq \frac{1}{2} \iff 1 \geq \frac{\alpha}{1-\alpha} \iff \beta  \geq \frac{\alpha^1 \beta}{1-\alpha}.$$
As $x_1 = \beta$ and $x_2 = \alpha^1 \beta + \alpha x _1 = 2 \alpha \beta$, this shows that $\alpha \leq \frac{1}{2}$ is equivalent to $x_2 \leq x_1$. According to Theorem \ref{strict}, $x_1$ is then a maximum.
\end{proof}

Assuming our upper bounds hold over the prime knots, Corollary \ref{firstMin} forces recursive sums whose base knot have a resultant unknot percentage less than or equal to one half to similarly respect our proposed absolute bounds over the primes for the trefoil and figure eight knots. 

\begin{theorem}
Let $K_1$ be a nontrivial prime knot with shape $S$. Then the resultant knot probability of $K_2$ from the recursive sum of $S$, $P(S^{N} \rightarrow K_2)$, has a limit $\displaystyle  \lim_{N \to \infty} P(S^{N} \rightarrow K_2)=0$.
\end{theorem}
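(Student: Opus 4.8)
The plan is to exploit the explicit recurrence $x_{N+1} = \alpha^N \beta + \alpha x_N$ established just above (with $\alpha = P(S \rightarrow U)$, $\beta = P(S \rightarrow K_2)$, and $x_N = P(S^N \rightarrow K_2)$) by solving it in closed form and reading off the limit. First I would use the initial condition $x_0 = 0$ and compute the first few terms, $x_1 = \beta$, $x_2 = 2\alpha\beta$, $x_3 = 3\alpha^2\beta$, which suggests the closed form $x_N = N\beta\alpha^{N-1}$. I would then confirm this by a one-line induction: assuming $x_N = N\beta\alpha^{N-1}$, the recurrence gives
\begin{align*}
x_{N+1} &= \alpha^N\beta + \alpha \cdot N\beta\alpha^{N-1} \\
&= \alpha^N\beta + N\beta\alpha^N = (N+1)\beta\alpha^N,
\end{align*}
which is exactly the formula with $N$ replaced by $N+1$.

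Next I would establish that $\alpha < 1$. Since $K_1$ is a nontrivial prime knot with shape $S$, the assignment of crossings producing $K_1$ itself is a non-unknot resultant, so $P(S \rightarrow U) < 1$; moreover $\alpha \geq 0$ trivially. With $0 \leq \alpha < 1$ in hand, the conclusion follows from the standard fact that $N\alpha^{N-1} \to 0$ as $N \to \infty$ whenever $|\alpha| < 1$ (geometric decay dominates the linear factor). Since $\beta$ is a fixed constant, $x_N = N\beta\alpha^{N-1} \to 0$, which is precisely the claimed limit.

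I do not expect a serious obstacle here; the only points requiring care are verifying the closed form and noting that $\alpha$ is bounded strictly below $1$, without which the product $N\alpha^{N-1}$ would fail to decay. If one preferred to avoid guessing the closed form, one could argue instead by monotone convergence: by Theorem \ref{strict} the sequence is eventually strictly decreasing and is bounded below by $0$, hence converges to some limit $L \geq 0$; passing to the limit in $x_{N+1} = \alpha^N\beta + \alpha x_N$ and using $\alpha^N\beta \to 0$ yields $L = \alpha L$, so $(1-\alpha)L = 0$ and therefore $L = 0$ since $\alpha \neq 1$. The explicit formula is cleaner and self-contained, so I would present that as the main argument and mention the limiting-recurrence version only as a remark.
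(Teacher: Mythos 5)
Your proof is correct, but it takes a genuinely different route from the paper's. The paper's proof of this theorem is essentially the limiting-recurrence argument you relegate to a closing remark: it sets $L = \lim_{N \to \infty} x_{N+1}$, passes to the limit in $x_{N+1} = \alpha^N \beta + \alpha x_N$ to obtain $L = 0 + \alpha L$, and concludes $L = 0$ since $0 < \alpha < 1$. Notably, the paper simply \emph{defines} $L$ as the limit without first establishing that the limit exists, so as written its argument has a small gap which your remark repairs exactly as needed: Theorem \ref{strict} gives eventual strict monotone decrease, boundedness below by $0$ gives convergence, and only then may one pass to the limit in the recurrence. Your main argument --- solving the recurrence with $x_0 = 0$ to get the closed form $x_N = N \beta \alpha^{N-1}$, verifying it by induction, and invoking the standard fact that $N\alpha^{N-1} \to 0$ for $0 \le \alpha < 1$ --- is self-contained and sidesteps the existence question entirely; in fact, the closed form you derive is precisely the statement of the theorem the paper proves immediately \emph{after} this one, by the same induction, so your route amounts to establishing that later result first and reading the limit off of it. Your justification that $\alpha < 1$ (the crossing assignment recovering $K_1$ itself is a non-unknot resultant) is sound, and matches the role nontriviality plays in the paper's proof; the edge case $\alpha = 0$, which the paper excludes by asserting $\alpha > 0$, is harmless in your version since then $x_N = 0$ for all $N \ge 2$.
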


\begin{proof}
Let $\displaystyle L = \lim_{N \to \infty} x_{N+1}$. Note nontriviality of $K_1$ forces $0 < \alpha < 1$. Then

$$ L =  \lim_{N \to \infty} \alpha^N \beta +  \lim_{N \to \infty} \alpha x_N = 0 + \alpha L $$

\noindent This can only be true if $L=0$.
\end{proof}

\begin{theorem}
Let $K_1$ be a prime knot with shape $S$. Then for a nontrivial prime $K_2$ the resultant knot probability $P(S^{N} \rightarrow K_2)=N \cdot P(S \rightarrow U)^{N-1} P(S \rightarrow K_2)$.
\end{theorem}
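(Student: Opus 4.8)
The plan is to recognize that the desired closed form is simply the explicit solution of the first-order linear recurrence already derived in the excerpt, so the proof reduces to solving that recurrence. Recall the notation $\alpha = P(S \rightarrow U)$, $\beta = P(S \rightarrow K_2)$, and $x_N = P(S^N \rightarrow K_2)$, so that the recurrence reads $x_{N+1} = \alpha^N \beta + \alpha x_N$ with initial value $x_1 = \beta$ (equivalently $x_0 = 0$), as computed just above the statement. The target is to show $x_N = N \alpha^{N-1} \beta$.

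I would proceed by induction on $N$. The base case is immediate: at $N = 1$ the claimed formula gives $1 \cdot \alpha^{0}\beta = \beta$, which matches the value $x_1 = P(S^1 \rightarrow K_2) = \beta$ already established. For the inductive step, assuming $x_N = N \alpha^{N-1}\beta$, I substitute directly into the recurrence:
\[
x_{N+1} = \alpha^N \beta + \alpha x_N = \alpha^N \beta + \alpha\bigl(N \alpha^{N-1}\beta\bigr) = \alpha^N \beta + N \alpha^N \beta = (N+1)\alpha^N \beta,
\]
which is exactly the claimed formula at index $N+1$, closing the induction.

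A cleaner and more illuminating alternative, which I would at least mention, is to normalize out the geometric factor. Using that a nontrivial $K_1$ forces $\alpha \neq 0$, set $y_N = x_N / \alpha^{N-1}$; then dividing the recurrence by $\alpha^N$ converts $x_{N+1} = \alpha^N \beta + \alpha x_N$ into $y_{N+1} = \beta + y_N$, an arithmetic progression with common difference $\beta$ and first term $y_1 = \beta$. Hence $y_N = N\beta$, and therefore $x_N = N \alpha^{N-1}\beta$. This derivation has the advantage of producing the formula rather than merely verifying it, and it makes transparent why the coefficient is linear in $N$.

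There is no substantive obstacle here; this is a routine solution of a linear recurrence, and indeed both the limiting behavior and the single-maximum results proved earlier are corollaries of this explicit form (one may note that $x_{N+1}/x_N = \tfrac{N+1}{N}\alpha$ recovers exactly the monotonicity and maximum analysis of Theorem \ref{strict} and Corollary \ref{firstMin}). The only point demanding care is the indexing convention: since the recurrence advances $x_N$ to $x_{N+1}$, I must anchor the induction at the correct initial value and confirm the closed form is consistent with $x_1 = \beta$, guarding against an off-by-one error between the $x_0 = 0$ and $x_1 = \beta$ conventions.
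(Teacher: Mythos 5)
Your proof is correct and follows essentially the same route as the paper's: induction on $N$ using the recurrence $x_{N+1} = \alpha^N\beta + \alpha x_N$, with the same base-case check (the paper anchors at both $x_0 = 0$ and $x_1 = \beta$) and the identical algebraic step $\alpha^N\beta + N\alpha^N\beta = (N+1)\alpha^N\beta$. Your normalization remark ($y_N = x_N/\alpha^{N-1}$ turning the recurrence into an arithmetic progression) is a nice way to \emph{derive} rather than verify the formula, but it is only a cosmetic variant of the same argument.
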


\begin{proof}
By induction on $N$. Note $0 \alpha^{-1} \beta = 0 = x_0$, and $1 \alpha^0 \beta = \beta = x_1$, showing agreement with our recursive formula for $P(S_1^{N} \rightarrow K_2)$ in the base cases.

Now suppose $x_k = k \alpha^{k-1} \beta$ for some $N = 0, 1, \ldots, k$. Then $x_{k+1} = \alpha^{k+1-1} \beta + \alpha x_k$. By the inductive hypothesis, 
$$x_{k+1} = \alpha^k \beta + \alpha (k \alpha^{k-1} \beta) =  \alpha^k \beta + k \alpha^{k} \beta = (k+1) \alpha^{k} \beta$$
and thus the formula holds for all $N$.
\end{proof}

\begin{theorem}
Let $K_1$ be a nontrivial prime knot with shape $S$ with resultant unknot probability $\alpha$. Then the resultant knot probability for a nontrivial prime $K_2$ has a singular maximum at 
$$
\begin{array}{cc}
    \begin{array}{ll}
      N=1, & \text{ if }  \alpha < \frac{1}{2} \\
      N=2, & \text{ if }  \frac{1}{2} < \alpha < \frac{2}{3} \\ 
      N=3, & \text{ if }  \frac{2}{3} < \alpha < \frac{3}{4} \\
    \end{array}
\end{array}
$$
\end{theorem}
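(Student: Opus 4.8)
The plan is to argue directly from the closed form $x_N := P(S^N \to K_2) = N\alpha^{N-1}\beta$ proved just above, with $\alpha = P(S\to U)$ and $\beta = P(S\to K_2)$. Because $K_2$ is a genuine resultant we have $\beta\neq 0$, and nontriviality of the prime $K_1$ gives $0<\alpha<1$; hence every $x_N>0$, and the location of the maximum is controlled entirely by the factor $N\alpha^{N-1}$, independently of $\beta$.

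First I would form the consecutive ratio
$$\frac{x_{N+1}}{x_N} = \frac{(N+1)\alpha^N\beta}{N\alpha^{N-1}\beta} = \Bigl(1+\tfrac{1}{N}\Bigr)\alpha.$$
From this the decisive dichotomy is immediate: $x_{N+1}>x_N$ exactly when $\alpha>\frac{N}{N+1}$, while $x_{N+1}<x_N$ exactly when $\alpha<\frac{N}{N+1}$. The strict inequalities in the hypothesis keep $\alpha$ off the boundary values $\frac{N}{N+1}$, where one would instead have $x_{N+1}=x_N$.

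Next I would use that the threshold sequence $\frac{N}{N+1}$ strictly increases to $1$, so the ratio $x_{N+1}/x_N$ strictly decreases in $N$. This makes $x_N$ strictly unimodal (matching, and sharpening, Theorem \ref{strict}): it strictly increases until, and strictly decreases after, the first index $N$ with $\alpha<\frac{N}{N+1}$. Reading off the cases: if $\alpha<\frac{1}{2}$ then already $x_2<x_1$, so the peak is at $N=1$; if $\frac{1}{2}<\alpha<\frac{2}{3}$ then $x_2>x_1$ but $x_3<x_2$, giving the peak at $N=2$; and if $\frac{2}{3}<\alpha<\frac{3}{4}$ then $x_2>x_1$ and $x_3>x_2$ but $x_4<x_3$, giving the peak at $N=3$.

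There is no real obstacle here: the whole argument is a ratio test on an explicit ``linear times geometric'' sequence. The only point needing care is the word \emph{singular}, namely that the maximizing index is unique. This is precisely why the hypotheses use strict inequalities: at a boundary value $\alpha=\frac{N}{N+1}$ one has $x_{N+1}=x_N$, so the maximum would be shared by two consecutive indices, and excluding those values guarantees uniqueness.
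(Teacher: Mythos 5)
Your proposal is correct and takes essentially the same approach as the paper: both arguments compare consecutive terms of the closed form $x_N = N\alpha^{N-1}\beta$ (you via the ratio $\bigl(1+\tfrac{1}{N}\bigr)\alpha$, the paper via the difference $(N+1)\alpha^N\beta - N\alpha^{N-1}\beta$), arriving at the identical threshold condition $\tfrac{N-1}{N} < \alpha < \tfrac{N}{N+1}$ and reading off the three cases. Your explicit remark that the ratio strictly decreases in $N$ supplies the global unimodality needed to upgrade local to unique maximality, a step the paper instead inherits implicitly from Theorem \ref{strict}, so your write-up is marginally more self-contained but not a different route.
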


\begin{proof}
If the resultant knot probability for $K_2$ has solely one maximum value for the given $S$ after $N$ recursive sums, then its value at the $N$th step is smaller at the $N+1$th step and larger at the $N-1$th step. The differences between each step are $(N+1) \alpha^N \beta - N \alpha^{N-1} \beta < 0$ and $N\alpha^{N-1}\beta - (N-1)\alpha^{N-2}\beta > 0$. Solving each inequality for $\alpha$ places bounds on what the resultant unknot probability can be while having a maximum solely at particular values of $N$:

$$ \frac{N-1}{N}  < \alpha < \frac{N}{N+1}$$

\noindent Choosing low values of $N$ then produce the ranges above.
\end{proof}

Assuming Conjecture \ref{unknotconjecture} holds, its corollary that the absolute maximum resultant unknot probability is 0.75 forces the following consequence of the conjecture.

\begin{consequence} \label{knotsum2}
All resultant knot probabilities of nontrivial prime knots from recursive sums have a maximum in the first four connected sums.
\end{consequence}

\begin{proof}
As forced by Theorem \ref{strict}, the resultant knot probability will attain a maximum at two adjacent steps if the difference between each is equal to zero ($N \geq 1$), or, specifically, if:

$$(N+1) \alpha^N \beta - N \alpha^{N-1} \beta = 0 \text{ or } (N+1)\alpha -N = 0$$

Solving for $N$, the first step where the maximum occurs, as a function of $\alpha$, we get:

$$N(\alpha)=\frac{\alpha}{1-\alpha}$$

Note the derivative of $N(\alpha)$, $\frac{1}{(1-\alpha)^2}$, is positive for all $\alpha$, so the largest final step containing a maximum will occur for the largest possible $\alpha$, which we conjecture to be $\alpha=0.75$. $N(0.75)=3$, so the final step containing a maximum will occur at $N=4$, as the value of the resultant probability stays the same.
\end{proof}

Similar to the definition of minimal free knot diagrams above, this creates counterexamples to our later proposed bounds on resultant trefoil and figure eight probabilities. In those cases, we note that they apply to solely prime knots. These counterexamples, and the system itself, depend more on the unknot probability of the base knot shape, not the base shape's resultant probability. For example, $7_1$ appears to produce the most trefoils at a rate of 32.8125\%, but its recursive sum only produces a higher percentage (approximately 35.89\%) for $(7_1)^2$. Meanwhile the recursive sum of the trefoil, with its absolute maximum resultant unknot probability of 75\% produces conjecture breaking results for $(3_1)^2$ through $(3_1)^6$. 

There appear to be further manipulations we can perform within free knot diagrams which do not change their resultant knot probability.  To describe these, we need the language of tangles.

\begin{definition}
A \emph{tangle} is a portion of a knot or link contained in a circular region such that the region's boundary is crossed by exactly four strands.  Reidemeister moves and planar isotopies are allowed if and only if they do not move the four `anchor points' around the boundary circle.
\end{definition}

A free tangle is a tangle with free crossings instead of true crossings.   In the illustrations below, $T_i$ are any free tangles. 

Observations show that $90\degree$ rotations and reflections across the diagonals of a tangle (i.e., an axis connecting two opposite anchor points) change the resultant knot probability. However,

\begin{conjecture}
$180\degree$ rotations and vertical and horizontal reflections (i.e., reflections which fix none of the four anchor points) of a tangle within a knot projection do not affect resultant knot probability. 
\end{conjecture}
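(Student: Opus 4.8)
The plan is to recognize these three operations as exactly the three \emph{Conway mutations} of the tangle and to exploit the fact that mutation preserves the Jones polynomial. Place the free tangle $T$ inside a disk whose boundary circle carries the four anchor points, and let $C$ denote the fixed complementary tangle (the ``closure''), so that the diagram is $C \cup T$. The $180\degree$ rotation is the mutation about the axis perpendicular to the page, while the horizontal and vertical reflections are the mutations about the two in-plane axes (for a \emph{free} tangle the simultaneous negation of over/under that a genuine in-plane flip would cause is invisible, so on free diagrams these operations really are the planar reflections named in the statement). The first fact to record is that these three involutions, together with the identity, form the Klein four-group, and each of them preserves the pairing that the two tangle arcs induce on the four anchors; consequently $C \cup g(T)$ is again a connected, single-component diagram for every such $g$. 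This is precisely the feature that fails for the $90\degree$ rotations and the diagonal reflections, which matches the paper's observation that those operations \emph{do} change the resultant probability.

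First I would set up the natural crossing-matching bijection $\Phi_g$ between assignments of $C \cup T$ and assignments of $C \cup g(T)$: act as the identity on the crossings of $C$, and send each crossing of $T$ to its image crossing in $g(T)$ carrying the same over/under label. For any assignment $\sigma$, the two resolved knots $K_\sigma = (C\cup T)_\sigma$ and $K'_\sigma = (C\cup g(T))_{\Phi_g(\sigma)}$ are then \emph{mutants} of one another (after absorbing the global mirror image produced by the two in-plane flips, which is harmless because we have already agreed to identify a knot with its mirror). Since the Jones polynomial is a classical mutation invariant, $K_\sigma$ and $K'_\sigma$ carry equal Jones data for every $\sigma$. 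Because the paper distinguishes resultants by their Jones polynomial, and this is a complete invariant on prime knots with at most nine crossings, $\Phi_g$ is a probability-preserving bijection at exactly the level of resolution used in the computations. This already proves the conjecture for every free diagram all of whose resultants are Jones-distinguishable, which accounts for all of the experimental data.

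The main obstacle is that mutation does not preserve knot type in general: the Kinoshita--Terasaka and Conway knots are mutants with equal Jones polynomials but distinct knot types. Thus $\Phi_g$ need not send $K_\sigma$ to an isotopic knot, and the crossing-matching bijection by itself cannot establish equality of the full \emph{knot-type} distribution. Worse, essentially all of the standard polynomial invariants---Alexander, Jones, HOMFLY, Kauffman---are mutation invariants, so none of them can detect the discrepancy; separating the exceptional mutant pairs requires subtler tools such as the knot group or a knot homology theory. Upgrading ``equal Jones polynomial for every assignment'' to ``equal knot-type distribution'' is exactly the gap between the accessible statement and the conjecture, and I expect it to be the crux. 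I would attack it by trying to show that, summed over all $2^n$ assignments, the genuinely nonisotopic mutant pairs $(K_\sigma, K'_\sigma)$ occur with matching frequencies on the two sides---either by constructing a refined, knot-type-preserving bijection of the resultant multisets, or by restricting to tangle families provably too simple to realize any distinct-mutant pair among their resultants.
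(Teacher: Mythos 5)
You should know at the outset that the paper offers no proof of this statement: it is one of the conjectures the authors describe as ``supported by the data we generated, but do not seem accessible to prove at the moment,'' so there is no argument of theirs to measure yours against. Your identification of the three operations with the three Conway mutations is correct and is the natural line of attack. One technical slip: for the two in-plane flips, a genuine $180^\circ$ rotation in $3$-space about an axis lying in the projection plane exchanges over and under, so the crossing-matching bijection $\Phi_g$ should carry each tangle crossing to its image with the over/under label \emph{swapped}, not ``the same over/under label''; with same-label matching, $K'_{\Phi_g(\sigma)}$ differs from the true mutant by mirroring only the crossings inside the tangle, which is neither a mutation nor a global mirror and need not preserve the Jones polynomial. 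The fix costs nothing: swapping all labels inside the tangle is itself an involution of the $2^n$ assignments, so compose it into $\Phi_g$. After that correction your first paragraph proves a genuine theorem: the multiset of mutation classes (hence of Jones polynomials, and of all the standard polynomial invariants) of the $2^n$ resultants is invariant under the three operations. Moreover, since the smallest pair of distinct mutant knots is the $11$-crossing Conway/Kinoshita--Terasaka pair, and a resultant of an $n$-crossing free diagram has crossing number at most $n$, your bijection proves the conjecture outright for all free knot diagrams with at most $10$ crossings --- which covers, and slightly exceeds, every computation reported in the paper.

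That said, as you candidly acknowledge, this does not prove the conjecture in general, and the gap is real: mutation does not preserve knot type, so for larger diagrams $\Phi_g$ could pair a Conway-knot resultant on one side with a Kinoshita--Terasaka resultant on the other, leaving the knot-type distributions potentially unequal even though every mutation-invariant quantity agrees. Your two proposed repairs --- a refined knot-type-preserving bijection, or a restriction to tangle families provably incapable of producing distinct mutant pairs among their resultants --- are directions rather than arguments; neither is carried out, and controlling which mutant partners arise among $2^n$ resultants looks essentially as hard as the conjecture itself. The honest summary: you have established a meaningful approximation (invariance of the resultant distribution as seen by any mutation invariant, and full invariance through $10$ crossings), which is strictly more than the paper establishes for this statement, but the conjecture as stated remains open after your argument, exactly as it does in the paper.
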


For example, the following free knot diagrams with a variety of choices of $T_i$ (including nonsymmetric possibilities) were seen to have the same resultant knot probabilities.

\begin{center}
\begin{tikzpicture}
\begin{scope}
    \draw (0,0) circle [radius =.5];
    \draw (0,2) circle [radius = .5];
    \draw (1,4) circle [radius = .5];
    \draw (-1, 4) circle [radius = .5];
    
    \draw (0,0) node {$T_4$};
    \draw (0,2) node {$T_3$};
    \draw (1,4) node {$T_2$};
    \draw (-1,4) node {$T_1$};
    
    \draw (-0.7,4.4) .. controls (0, 4.5) .. (0.7, 4.4);
    \draw (-0.6, 3.7) .. controls (0, 3.6) .. (0.6, 3.7);
    \draw (-1.3,3.6) .. controls (-1.3, 0.5) .. (-0.5, 0);
    \draw (1.3,3.6) .. controls (1.3, 0.5) .. (0.5, 0);
    \draw (-0.2, 0.45) .. controls (-0.3, 0.95) .. (-0.2, 1.55);
    \draw (0.2, 0.45) .. controls (0.3, 0.95) .. (0.2, 1.55);
    \draw (-0.8, 3.55) .. controls  (-0.7, 3.05).. (-0.2, 2.45); 
    \draw (0.8, 3.55) .. controls (0.7, 3.05) .. (0.2, 2.45);
\end{scope}

\draw[thick, <->] (1.5,2) -- (2.5,2);

\begin{scope}[xshift = 4cm]
    \draw (0,0) circle [radius =.5];
    \draw (0,2) circle [radius = .5];
    \draw (1,4) circle [radius = .5];
    \draw (-1, 4) circle [radius = .5];
    
    \draw (0,0) node {$T_4$};
    \draw (0,2) node [rotate=180] {$T_3$};
    \draw (1,4) node {$T_2$};
    \draw (-1,4) node {$T_1$};
    
    \draw (-0.7,4.4) .. controls (0, 4.5) .. (0.7, 4.4);
    \draw (-0.6, 3.7) .. controls (0, 3.6) .. (0.6, 3.7);
    \draw (-1.3,3.6) .. controls (-1.3, 0.5) .. (-0.5, 0);
    \draw (1.3,3.6) .. controls (1.3, 0.5) .. (0.5, 0);
    \draw (-0.2, 0.45) .. controls (-0.3, 0.95) .. (-0.2, 1.55);
    \draw (0.2, 0.45) .. controls (0.3, 0.95) .. (0.2, 1.55);
    \draw (-0.8, 3.55) .. controls  (-0.7, 3.05).. (-0.2, 2.45); 
    \draw (0.8, 3.55) .. controls (0.7, 3.05) .. (0.2, 2.45);
\end{scope}
\end{tikzpicture}
\end{center}

\begin{consequence}
Resultant knot probability is invariant under flyping.
\end{consequence}

\begin{definition}
The flype, defined diagrammatically below, rotates a tangle and an adjacent crossing 180 degrees within a knot diagram.
\begin{center}
\begin{tikzpicture}
\draw (1.5,0.25) circle [radius = .5];
\draw (1.5,0.25) node{$T_2$};
\draw (1.94,0.5) -- (2.29,0.5);
\draw (1.94,0)--(2.29,0);
\draw (2.29,0)--(2.79,0.5);
\draw (2.29,0.5)--(2.79,0);
\draw (2.79,0)--(3,0);
\draw (2.79, 0.5)--(3,0.5);
\draw (.67, 0) -- (1.06, 0);
\draw (.66, 0.5) -- (1.06, 0.5);

\draw[thick, <->] (3.25,0.25)--(3.83,0.25);

\begin{scope}[xshift=-1cm]
\draw (6.5,0.25) node [rotate=180] {$T_2$};
\draw (6.5, 0.25) circle [radius = .5];
\draw (6.92,0) -- (7.15, 0);
\draw (6.92, 0.5) -- (7.15, 0.5);
\draw (5.2, 0.5) -- (5.38, 0.5);
\draw (5.38, 0.5) -- (5.88, 0);
\draw (5.2, 0) -- (5.38, 0);
\draw (5.38, 0) -- (5.88, 0.5);
\draw (5.88, 0) -- (6.08,0);
\draw (5.88, 0.5) -- (6.08,.5);
\end{scope}
\end{tikzpicture}
\end{center}
\end{definition}

\begin{proof}
The flype is still a rotation of a tangle as in the above conjecture. This can be seen by defining the tangle to be rotated as the sum of $T_2$ and the free 1 tangle. Thus, the flype preserves the resultant probability.
\end{proof}

\begin{conjecture}
For all free knot diagrams and nontrivial knots, the resultant knot probability is less than $\frac{1}{2}$.
\end{conjecture}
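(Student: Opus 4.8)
The plan is to reduce the conjecture to a single, cleaner statement --- that the unknot is always a strict plurality outcome --- and then to attack that statement by an explicit injection of assignments.

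\emph{Step 1 (Reduction).} I would first isolate the following \emph{unknot-dominance lemma}: for every free knot diagram $F$ and every nontrivial knot $K$ arising as a resultant of $F$, the resultant probabilities satisfy $P(F \to U) > P(F \to K)$, where $U$ is the unknot. Granting this, the conjecture follows in one line. Since resultant probabilities over all knot types sum to $1$, we have $P(F\to U) + P(F\to K) \le 1$; combining with the strict inequality $P(F\to K) < P(F\to U)$ gives $2\,P(F\to K) < P(F\to U)+P(F\to K) \le 1$, that is $P(F\to K) < \tfrac12$. (If $K$ does not arise at all then $P(F\to K)=0<\tfrac12$ trivially.) Thus the entire content of the conjecture is carried by the lemma, and the remainder is elementary.

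\emph{Step 2 (Attacking the lemma via an injection).} The heart of the matter is to produce a strict, non-surjective injection $A_K \hookrightarrow A_U$, where $A_J$ denotes the set of crossing-assignments of $F$ producing the knot $J$. The natural source of unknot assignments is the climb/downramp construction of the opening theorem: fixing a base arc and a direction yields a \emph{descending} assignment (``over at each crossing's first visit''), which is always the unknot. I would try to make this construction reversible. Given $a \in A_K$, traverse $F$ from a canonical base arc, locate the first crossing at which $a$ fails to be descending, and define $\iota(a)$ by correcting $a$ toward descending form while encoding, within the corrected data, exactly which crossings were altered, so that $a$ is recoverable from $\iota(a)$. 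The image lands in $A_U$, and injectivity comes from the recovery of the alteration record. Strictness should then be automatic: the purely descending assignments (the $\ge 2n$ unknots guaranteed by the opening theorem) are canonical and can be arranged to lie outside the image of $\iota$, forcing $|A_U| > |A_K|$.

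\emph{Step 3 (Composite knots and known reductions).} For composite $K$ I would exploit the connected-sum machinery already in hand. When $F$ is itself a connected sum $S_1 \# S_2$, Theorems \ref{knotsumunk} and \ref{knotsumpr} express $P(F\to K)$ as sums of products of component probabilities, each bounded by the prime case; since such products only shrink, the bound propagates, suggesting one first establish the lemma for prime $K$ and connected-sum-irreducible $F$ and then bootstrap. As an independent sanity check, the recursive-sum analysis gives $P(S^N\to K_2) = N\,\alpha^{N-1}\beta$ with $\alpha = P(S\to U)$, whose maximum over $N$ is controlled once $\alpha \le \tfrac34$ (Conjecture \ref{unknotconjecture}), and one verifies directly that this maximum never reaches $\tfrac12$ for nontrivial $\beta$.

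\emph{Main obstacle.} The real difficulty is entirely in Step 2: proving unknot-dominance, and specifically producing a genuinely \emph{injective} modification $A_K \to A_U$. The descending construction is canonical but massively many-to-one, and the bookkeeping needed to make it reversible is exactly the delicate point --- a priori the ``alteration record'' need not fit inside a single assignment of $F$ without collisions. As a fallback one can leverage the structure that $A_K$ is invariant under the global crossing-flip $a\mapsto\bar a$ (since $K$ and its mirror are identified), so $A_K$ is a union of antipodal pairs; but extracting the sharp constant $\tfrac12$ from this symmetry alone appears to require a genuinely new idea, which is why I expect this to be the crux of any complete proof.
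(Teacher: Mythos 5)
First, a point of order: this statement is a \emph{conjecture} in the paper --- the authors offer no proof, only the tabulated data and a heuristic (for complicated diagrams the unknot probability itself drops below $\tfrac12$, and the growing number of possible resultants spreads the remaining mass so thinly that no single knot crosses the threshold). So your proposal cannot match a proof that does not exist; the question is whether your plan could close the gap, and it cannot in its present form, because your Step 1 reduces the conjecture to a lemma that is \emph{false}. Strict unknot dominance $P(F\to U) > P(F \to K)$ fails for connected sums of trefoil shapes, and the paper's own results exhibit this. The table row for $3_1\#3_1\#3_1$ shows unknot and trefoil probabilities both equal to $42.1875\%$, already killing strictness; worse, the paper's formula $P(S^N \to K_2) = N\,\alpha^{N-1}\beta$ applied to the trefoil shape ($\alpha = \tfrac34$, $\beta = \tfrac14$) gives, at $N=4$, $P\bigl((3_1)^{\#4} \to 3_1\bigr) = 4\left(\tfrac34\right)^3\tfrac14 = \tfrac{27}{64} \approx 0.422$, while $P\bigl((3_1)^{\#4} \to U\bigr) = \left(\tfrac34\right)^4 = \tfrac{81}{256} \approx 0.316$: the trefoil strictly \emph{beats} the unknot. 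Consequently no injection $A_K \hookrightarrow A_U$ can exist for these diagrams, so the entire Step 2 program (making the descending construction reversible) is chasing a statement with counterexamples, not merely a hard statement. Restricting your lemma to prime $K$ does not rescue it, since the trefoil is prime --- it is the \emph{diagram} $F$ that is composite here.

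Your Step 3 contains the second concrete error, and it is the one that hides the counterexample from you: the claim that in the connected-sum formulas ``such products only shrink'' is wrong. The formula $P(S_1\#S_2 \to K_3) = P(S_1\to U)P(S_2\to K_3) + P(S_1\to K_3)P(S_2\to U)$ is a \emph{sum} of two products and can exceed the component probabilities; indeed the trefoil probability grows from $25\%$ to $37.5\%$ to $\approx 42.2\%$ along $(3_1)^{\#N}$, $N = 1,2,3$, before decaying. So the bound does not propagate from irreducible diagrams by bootstrapping, and any real proof of the conjecture must bound each resultant's probability directly (as the paper's heuristic suggests, via the proliferation of distinct resultants on complicated diagrams) rather than by comparison with the unknot. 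Your mirror-symmetry fallback ($A_K$ is a union of antipodal pairs) is sound but, as you concede, yields only a lower bound of $\tfrac{2}{2^n}$ on nonzero probabilities, nowhere near the constant $\tfrac12$. One small credit: the elementary logic of Step 1 (dominance plus $P(U)+P(K)\le 1$ implies $P(K)<\tfrac12$) is valid as stated; it is the lemma it rests on that is refuted by the paper's own Section \ref{results}.
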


For many of the prime knots through eight crossings, their free knot diagram has a resultant unknot probability greater than $\frac{1}{2}$, which verifies the conjecture for these knots.  As the number of crossings increase, the resultant unknot probability tends to decrease and often drops below $\frac{1}{2}$. However, the larger number of crossings permits more higher crossing resultants (e.g., a nine crossing free knot diagram producing eight crossing resultants) to fill the space that the resultant unknots cede, preventing any one resultant from crossing the $\frac{1}{2}$ probability threshold.

\section{Tangles and the $n$-foil knots}
\label{tangles}
Tangles, as introduced above, give us a concise notation for a wide variety of knots.

\begin{definition}
Conway's tangle notation, introduced in \cite{MR0258014}, uses the phrase \emph{$\pm n$ tangle} to describe a tangle which begins with two parallel horizontal strands and twists them $n$ times in the same direction.  The sign is dictated by the sign of overstrand's slope. 
\end{definition}

 For example, here is the -5 tangle:

\begin{center}
\begin{tikzpicture}
\draw[dashed] (0, 0) ellipse (3 cm and 1.1 cm); 
\begin{scope}
\draw (.1, .1) -- (.5,.5);
\draw (-.5,.5) -- (.5,.-.5);
\draw (-.5, -.5) -- (-.1,-.1);
\end{scope}
\begin{scope}[xshift=1.25cm]
\draw (.1, .1) -- (.5,.5);
\draw (-.5,.5) -- (.5,.-.5);
\draw (-.5, -.5) -- (-.1,-.1);
\end{scope}
\begin{scope}[xshift=-1.25cm]
\draw (.1, .1) -- (.5,.5);
\draw (-.5,.5) -- (.5,.-.5);
\draw (-.5, -.5) -- (-.1,-.1);
\end{scope}
\begin{scope}[xshift=2.5cm]
\draw (.1, .1) -- (.5,.5);
\draw (-.5,.5) -- (.5,.-.5);
\draw (-.5, -.5) -- (-.1,-.1);
\end{scope}
\begin{scope}[xshift=-2.5cm]
\draw (.1, .1) -- (.5,.5);
\draw (-.5,.5) -- (.5,.-.5);
\draw (-.5, -.5) -- (-.1,-.1);
\end{scope}
 
\draw (.5,.5) .. controls (.625, .6) .. (.75,.5);
\draw (.5, -.5) .. controls (.625, -.6) .. (.75, -.5);
\draw (-.5, .5) .. controls (-.625, .6) .. (-.75, .5);
\draw (-.5, -.5) .. controls (-.625, -.6) .. (-.75, -.5);
\draw (1.75, .5) .. controls (1.875, .6) .. (2, .5);
\draw (1.75, -.5) .. controls (1.875, -.6) .. (2, -.5);
\draw (-1.75, .5) .. controls (-1.875, .6) .. (-2, .5);
\draw (-1.75, -.5) .. controls (-1.875, -.6) .. (-2, -.5);
\end{tikzpicture}
\end{center}

There are two trivial crossingless tangles, the zero and $\infty$ tangles:

\begin{center}
\begin{tikzpicture}
\draw [dashed] (-2, 0) circle (0.75 cm);
\draw (-2.75, .75) .. controls (-2, 0) .. (-1.25, .75);
\draw (-2.75, -.75) .. controls (-2, 0) .. (-1.25, -.75);
\draw (-2, -1.25) node {0};

\draw[dashed] (0,0) circle (0.75 cm);
\draw (.75, .75) .. controls (0,0) .. (.75, -.75);
\draw (-.75, .75) .. controls (0,0) .. (-.75, -.75);
\draw (0, -1.25) node {$\infty$};
\end{tikzpicture}
\end{center}

Quite complex knots can quickly be formed by combining tangles in a variety of ways.  The class of rational tangles are the tangles formed by the operation of multiplication on  base tangles $T_1$ and $T_2$. This requires $T_1$ to be reflected across a slope -1 diagonal through the tangle before connecting the adjacent unattached strands of both. 

$$
\begin{tikzpicture}
\draw[dashed] (-3,0) circle (0.75cm);
\draw (-2.75, .25) node {$T_1$};
\draw (-3.5, .575) -- (-3.75, 0.825);
\draw (-3.5, -.575) -- (-3.75, -0.825);
\draw (-2.5, .575) -- (-2.25, 0.825);
\draw (-2.5, -.575) -- (-2.25, -0.825);
\draw[dashed, red] (-3.5, .575) -- (-2.5, -.575);

\draw[->] (-2,0) -- (-1.5,0);

\begin{scope}[xshift = 2.5cm]
\draw[dashed] (-3,0) circle (0.75cm);
\draw (-3, 0) node [xscale = -1, yscale= 1, rotate = 90]  {$T_1$};
\draw (-3.5, .575) -- (-3.75, 0.825);
\draw (-3.5, -.575) -- (-3.75, -0.825);
\draw (-2.5, .575) -- (-2.25, 0.825);
\draw (-2.5, -.575) -- (-2.25, -0.825);
\end{scope}

\draw[dashed] (2,0) circle (0.75cm);
\draw (2,0) node [xscale = -1, yscale= 1, rotate = 90]  {$T_1$};
\draw[dashed] (4,0) circle (0.75cm);
\draw (4, 0) node {$T_2$};
\draw (1.5, .575) -- (1.25, 0.825);
\draw (1.5, -.575) -- (1.25, -0.825);
\draw (4.5, .575) -- (4.75, 0.825);
\draw (4.5, -.575) -- (4.75, -0.825);
\draw (2.5, .575) .. controls (3, 0.9) .. (3.5, .575);
\draw (2.5, -.575) .. controls (3, -0.9) .. (3.5, -.575);

\draw (5.9, 0) node {= \, $T_1 \cdot T_2$};

\end{tikzpicture}
$$

Conway's notation writes the multiplication of tangles as a space separated list (for example, 3 -1 2) of the individual tangle values. From a rational tangle we can produce a continued fraction. The tangle 3 -1 2 has continued fraction
$$ 2 + \frac{1}{-1 + \frac{1}{3}} = 2 - \frac{3}{2} = \frac{1}{2}$$

\begin{theorem}  \cite{MR0258014} 
Two rational tangles are isotopic to each other if their continued fraction is equivalent.
\end{theorem}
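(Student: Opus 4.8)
The plan is to realize the continued-fraction value as a complete isotopy invariant $F(T) \in \mathbb{Q} \cup \{\infty\}$ of rational tangles, so that Conway's theorem becomes the statement that $F$ descends to a bijection from isotopy classes of rational tangles (rel the four anchor points) onto $\mathbb{Q} \cup \{\infty\}$. The literal ``if'' direction asserted above---equal fraction implies isotopic---is the injectivity of this map; I would prove it alongside well-definedness (isotopic implies equal fraction), since having both in hand is what makes $F$ genuinely useful. The central structural fact I would use is that every rational tangle is built from the $0$-tangle by two elementary operations: adding a single horizontal twist, $T \mapsto T \pm [1]$, and the $90\degree$ turn used in the multiplication of tangles defined above.

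First I would pin down how these operations act on the fraction: a horizontal twist sends $F \mapsto F \pm 1$ and the turn sends $F \mapsto -1/F$, so that an iterated word in them produces exactly the continued fraction $a_n + 1/(a_{n-1}+\cdots+1/a_1)$ appearing in the definition. Since the M\"obius transformations $z \mapsto z+1$ and $z \mapsto -1/z$ generate $PSL_2(\mathbb{Z})$, which acts transitively on $\mathbb{Q} \cup \{\infty\}$, and since $F(0\text{-tangle})=0$, every extended rational is realized by some rational tangle; this gives surjectivity of $F$ for free.

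The well-definedness step---that no isotopy fixing the anchor points changes $F$---is the first real obstacle, and I would handle it topologically. Passing to the double cover of the $3$-ball branched over the two tangle strands produces a solid torus for every rational tangle, and $F(T)$ is precisely the slope on the boundary torus along which the meridian disk is attached; an ambient isotopy of the tangle rel boundary lifts to a homeomorphism of solid tori rel boundary, which cannot change this slope. (A purely combinatorial alternative is to verify invariance of $F$ under the Reidemeister moves and flypes directly, paralleling the flype invariance established earlier.)

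For the injectivity direction---the content of the stated theorem---I would reduce each rational tangle by isotopy to a canonical twist sequence whose continued fraction is the reduced form of $F(T)$, and then argue that any two continued-fraction words for the same rational differ only by the elementary arithmetic identities (insertion and removal of zero entries, $\ldots,a,0,b,\ldots \rightsquigarrow \ldots,a+b,\ldots$, and the sign and reversal symmetries), each of which I would realize by an explicit tangle isotopy. The hardest point is the reversal symmetry: reading the twist sequence backwards yields the same rational number, and topologically this is the $180\degree$ rotational symmetry of rational tangles, so matching the palindrome behavior of continued fractions to an honest rotation of the tangle---while keeping the geometric twist signs consistent with the arithmetic of the $PSL_2(\mathbb{Z})$ action---is where essentially all of the bookkeeping lives.
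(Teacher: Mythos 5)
The paper offers no proof of this statement to compare against: the theorem is imported verbatim from Conway \cite{MR0258014} as a citation, and the only related ingredient used later is Schubert's classification of rational tangle closures, likewise quoted (Lemma \ref{KnotFracEquiv}, via \cite{MR1953344}). Judged on its own, your outline is a correct sketch of the standard argument from the literature, essentially the Kauffman--Lambropoulou proof with the topological invariance step done in the Schubert/branched-cover style: the twist and turn act on fractions as $z \mapsto z \pm 1$ and $z \mapsto -1/z$, which generate $PSL_2(\mathbb{Z})$ and give surjectivity; invariance of $F$ follows because the double cover of the ball branched over the two strands is a solid torus whose meridian slope is $F(T)$ and is preserved by homeomorphisms rel boundary; injectivity follows by reduction to a canonical alternating twist form together with uniqueness of same-sign continued fraction expansions. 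Two remarks on your flagged points. First, for the literal statement quoted (equal fraction implies isotopic), invariance of $F$ is not logically required: rational tangles come presented by twist words, and injectivity only needs that any two words with the same fraction be connected by word moves realizable as isotopies; proving well-definedness anyway is the right call, but it is the converse half of Conway's theorem rather than part of this one. (An alternative invariance proof, natural given this paper's Kauffman-bracket setup, defines the fraction through the bracket polynomial as in \cite{MR1953344}, making isotopy invariance immediate.) Second, you are right that the reversal/palindrome symmetry is where the real content lives: in Kauffman--Lambropoulou this is exactly the ``flip lemma'' that a rational tangle is isotopic to its $180^\circ$ rotations --- a special, provable case of the symmetry this paper only conjectures for arbitrary free tangles --- and it is the one step that does not follow from formal $PSL_2(\mathbb{Z})$ manipulation. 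With that lemma and the same-sign canonical-form uniqueness written out, your sketch closes to a complete proof.
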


A tangle can be made into a knot or link by taking its closure, which connects the upper and lower two pairs of exterior arcs.

\begin{center}
\begin{tikzpicture}
\draw[dashed] (0,0) circle (0.75 cm);
\draw (0,0) node {$T$};
\draw (0.5, .575) -- (0.75, 0.825);
\draw (0.5, -.575) -- (0.75, -0.825);
\draw (-0.5, .575) -- (-0.75, 0.825);
\draw (-0.5, -.575) -- (-0.75, -0.825);
\draw (-0.75, .825) .. controls (-1, 1.075) .. (0, 1.125); 
\draw (0.75, .825) .. controls (1, 1.075) .. (0, 1.125); 
\draw (-0.75, -.825) .. controls (-1, -1.075) .. (0, -1.125); 
\draw (0.75, -.825) .. controls (1, -1.075) .. (0, -1.125); 
\end{tikzpicture}
\end{center}

The continued fraction still has use in differentiating between knots once we move to closures of tangles by the following process:

\begin{lemma}\label{KnotFracEquiv}(From
 \cite{MR82104}, as quoted in \cite{MR1953344}) Suppose there exist two rational tangles with continued fractions $\frac{p}{q}$ and $\frac{p'}{q'}$, where p, q and p', q' are relatively prime. Let K$\left(\frac{p}{q}\right)$ and K$\left(\frac{p'}{q'}\right)$ be the knots formed by the closure of the respective rational tangles. Then K$\left(\frac{p}{q}\right)$ and K$\left(\frac{p'}{q'}\right)$ are equivalent (up to isotopy) iff:
\begin{itemize}
\item p = p'
\item either q $\equiv$ q' mod p or qq' $\equiv$ 1 mod p.
\end{itemize}
\end{lemma}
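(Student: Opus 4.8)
The statement is Schubert's classification of rational (two-bridge) knots, which the paper imports from the literature; a proof amounts to reconstructing the standard argument, and I would split it into an elementary ``if'' direction and an ``only if'' direction that passes to the double branched cover.

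For the ``if'' direction I would exhibit the two isotopies realizing the two arithmetic conditions, using the fraction as a complete tangle invariant (the preceding theorem). Write $T$ for a rational tangle with fraction $p/q$ and let $N(T)$ denote its closure. First, a full vertical twist inserted between the two strands that the closure joins contributes crossings that are removed by Reidemeister~I once those strands are connected; since such a twist sends $p/q$ to $p/(q\pm p)$, it shows $N(T)$ depends on $q$ only modulo $p$, giving the condition $q\equiv q'\pmod p$. Second, rotating the whole tangle by $180^\circ$ fixes the closure knot but reverses the continued fraction $[a_1,\dots,a_n]\mapsto[a_n,\dots,a_1]$; the continuant identity for reversed continued fractions keeps the numerator $p$ fixed and sends $q$ to a $q'$ with $qq'\equiv\pm 1\pmod p$, the sign being absorbed by passing to the mirror image (which the paper does not distinguish). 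This yields $qq'\equiv 1\pmod p$. Combined with the tangle-classification theorem, these two moves show the listed conditions are sufficient.

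For the ``only if'' direction I would translate everything into $3$-manifolds. The plan is to compute the double branched cover $\Sigma_2(K(p/q))$ of $S^3$ branched over the rational knot, building $K(p/q)$ inductively from its continued fraction and tracking what each batch of twists does upstairs: twisting two parallel strands lifts to a Dehn filling, and running through $[a_1,\dots,a_n]$ identifies $\Sigma_2(K(p/q))$ with the lens space $L(p,q)$. I would then invoke the classical lens-space classification (Reidemeister--Franz--Brody torsion): $L(p,q)\cong L(p',q')$ iff $p=p'$ and $q'\equiv\pm q^{\pm 1}\pmod p$, with orientation-reversal corresponding to the outer sign. Because a mirror image of $K(p/q)$ reverses the orientation of $L(p,q)$, and the paper works up to mirror image, the four sign/inverse possibilities collapse to exactly the two stated conditions. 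The final ingredient is that a rational knot is recovered from its double cover --- the covering involution of a lens space is unique up to conjugacy, so the branch locus, hence the knot, is determined --- which makes $K\mapsto\Sigma_2(K)$ injective on rational knots and completes the direction.

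The main obstacle is the middle step of the last paragraph: faithfully carrying the twisting-to-Dehn-filling dictionary through the continued fraction so that the \emph{correct} residue $q$ (not merely the modulus $p$) appears in $L(p,q)$, since the sign and orientation conventions are exactly what must be pinned down for the conditions to come out as stated. I would cite, rather than reprove, both the lens-space classification and the uniqueness of the covering involution, as each is a substantial theorem in its own right. A fully elementary alternative avoiding $3$-manifolds exists (the Kauffman--Lambropoulou tangle calculus, which realizes the same two arithmetic conditions by analyzing the finitely many ways a rational knot can be cut open into a rational tangle) and would fit the combinatorial spirit of this paper, but it is longer and its bookkeeping is the dual difficulty to the surgery computation.
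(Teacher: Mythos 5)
The paper does not prove this lemma at all: it is imported verbatim from Schubert \cite{MR82104} via Kauffman--Lambropoulou \cite{MR1953344}, and is used later as a black box (e.g.\ to distinguish $2$ $n$ knots and to rule out foils among the blue assignments of the $2$ $1$ $n$ family). So there is no in-paper argument to compare against; what you have written is a reconstruction of the standard literature proof, and as an outline it is essentially correct: sufficiency via the two tangle moves (bottom twist, which sends $p/q$ to $p/(q\pm p)$ and is undone in the numerator closure, plus the palindrome/rotation move with the continuant identity), and necessity via the double branched cover $\Sigma_2(K(p/q))\cong L(p,q)$, the torsion classification of lens spaces, and uniqueness of the covering involution to recover the branch locus. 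This is exactly the architecture of the proofs in the sources the paper cites (Kauffman--Lambropoulou give the elementary tangle-calculus route you mention as an alternative).

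Two convention points in your sketch need repair, and they matter because the lemma is a strict iff for honest isotopy, not equivalence up to mirrors. First, in the palindrome step the continuant identity gives $qq'\equiv(-1)^{n+1}\pmod p$; the sign is eliminated not by passing to the mirror image but by choosing the parity of the continued fraction expansion, since $[a_1,\dots,a_n]=[a_1,\dots,a_n-1,1]$ lets you force $n$ odd. Appealing to mirrors here would only prove the weaker statement with $qq'\equiv\pm1$. Second, in the only-if direction you do not need to (and should not) ``collapse four possibilities using mirrors'': an isotopy of knots in $S^3$ induces an \emph{orientation-preserving} homeomorphism of double branched covers, and the orientation-preserving lens space classification already reads $q'\equiv q$ or $qq'\equiv1\pmod p$ on the nose; the outer sign $-q$ corresponds precisely to orientation reversal, i.e.\ to mirror knots, which the lemma's equivalence excludes. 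With those two fixes your outline matches the standard proof; note also that the paper elsewhere conflates knots with their mirrors for counting purposes, but it applies this lemma in its strict form, so the strict form is what must be proved.
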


Another, simpler operation on tangles is addition, which connects the adjacent strands like in the second step of tangle multiplication. Tangles constructed using both addition and multiplication are called algebraic tangles. 

\begin{center}
\begin{tikzpicture}
\draw[dashed] (-3,0) circle (0.75cm);
\draw[dashed] (-1,0) circle (0.75cm);
\draw (-3,0) node {$T_1$};
\draw (-1,0) node {$T_2$};
\draw (-2, 1.25) node {$T_1 + T_2$};

\draw (-0.5,.575) -- (-0.25,0.825);
\draw (-0.5,-.575) -- (-0.25,-0.825);
\draw (-3.5, .575) -- (-3.75, 0.825);
\draw (-3.5, -.575) -- (-3.75, -0.825);
\draw (-2.5, .575) .. controls (-2, 0.9) .. (-1.5, .575);
\draw (-2.5, -.575) .. controls (-2, -0.9) .. (-1.5, -.575);
\end{tikzpicture}
\end{center}

\begin{definition}
The foil knots, or n-foils, are the closures of $n$ tangles, where $n$ is odd. These knots include the trefoil, pentafoil, $7_1$, $9_1$, and the unknot (with the first Reidemeister relation applied).
\end{definition}

\begin{center}
\begin{tikzpicture}
\draw (.25,0) -- (0,0);
\draw (.25,.5) -- (0,.5);
\draw (.25, 0) -- (.75, .5);
\draw (.25,.5) -- (.75,0);
\draw (.75,0)--(1,0);
\draw (.75, .5) -- (1,.5);

\node[draw=none] (ellipsis1) at (1.425,0.25) {$\cdots$};

\draw (1.75,0) -- (2,0);
\draw (1.75,.5) -- (2,.5);
\draw (2, 0) -- (2.5, .5);
\draw (2,.5) -- (2.5,0);
\draw (2.5,0)--(2.75,0);
\draw (2.5, .5) -- (2.75,.5);

\draw (2.75,0) arc [start angle = 270, end angle = 90, radius = -1mm];
\draw (0,0) arc [start angle = 90, end angle = 270, radius = 1mm];
\draw (2.75,.5) arc [start angle = 90, end angle = 270, radius = -1mm];
\draw (0,.5) arc [start angle = 270, end angle = 90, radius = 1mm];
\draw (0,0.7) -- (2.75, 0.7);
\draw (0,-0.2) -- (2.75, -0.2);

\draw [decorate,decoration={brace,amplitude=10pt},xshift=0pt,yshift=-4pt]  (2.5,-0.2)--(0.25,-0.2) node [black,midway,yshift=-0.6cm]{$n$};
\end{tikzpicture}
\end{center}

\noindent An odd number of crossings is required since an even number of crossings would require two components, making the result a link.

\begin{theorem}\label{Foils}
The free $n$-foil produces ${n} \choose {\frac{n-k}{2}}$ left $k$-foils and ${n} \choose {\frac{n-k}{2}}$ right $k$-foils.
\end{theorem}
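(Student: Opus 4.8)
The plan is to recognize the $n$-crossing twist region of the free $n$-foil as a braid in the two-strand braid group and to exploit the fact that this group is infinite cyclic. First I would fix an orientation and label the $n$ free crossings $c_1, \dots, c_n$ in the order they appear along the twist. Assigning a crossing amounts to choosing one of its two resolutions, which I would record as a sign $\epsilon_i \in \{+1,-1\}$: here $\epsilon_i = +1$ means the assigned crossing agrees with a fixed positive generator $\sigma$, and $\epsilon_i = -1$ means it is the opposite crossing $\sigma^{-1}$. Thus an assignment of the free $n$-foil is exactly a word $\sigma^{\epsilon_1}\sigma^{\epsilon_2}\cdots\sigma^{\epsilon_n}$ in the two-strand braid group $B_2 = \langle \sigma \rangle \cong \mathbb{Z}$, and distinct knots can only come from the value of this word.

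The key step is the reduction. Because $B_2$ is generated by a single element subject only to $\sigma\sigma^{-1}=1$, every such word equals $\sigma^{k}$ with $k = \sum_{i} \epsilon_i$. Geometrically, each cancellation $\sigma^{\pm 1}\sigma^{\mp 1} = 1$ of an adjacent, oppositely-assigned pair is realized by a Reidemeister II move, and repeatedly cancelling adjacent opposite crossings collapses the twist region to a clean twist with net signed crossing number $k$. Taking the closure (which the paper defines as the foil), the closure of $\sigma^{k}$ is precisely the $|k|$-foil: right-handed if $k>0$ and left-handed if $k<0$. Since $n$ is odd, the sum $k$ is always odd, so every assignment closes to an honest knot (a foil) and never a link, which is consistent with the parity remark following the definition of foil knots.

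It then remains to count. An assignment yields net twist $+k$ exactly when it uses $\frac{n+k}{2}$ positive crossings, so the number of assignments producing the right $k$-foil is $\binom{n}{\frac{n+k}{2}}$, while the left $k$-foil (net $-k$) arises from $\frac{n-k}{2}$ positive crossings and is produced by $\binom{n}{\frac{n-k}{2}}$ assignments. The symmetry $\binom{n}{\frac{n+k}{2}} = \binom{n}{\frac{n-k}{2}}$ makes these two counts equal, giving exactly $\binom{n}{\frac{n-k}{2}}$ of each, as claimed.

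The main obstacle I expect is not the algebra but pinning down the geometric dictionary cleanly: justifying that the two resolutions at a crossing of the twist correspond to $\sigma^{\pm1}$ in a single copy of $B_2$, that only adjacent opposite pairs need be cancelled (which is exactly the statement that word reduction in $\mathbb{Z}$ uses only adjacent inverse cancellations, as realized by Reidemeister II), and fixing a consistent convention so that $k>0$ reliably names the right-handed foil and $k<0$ the left-handed one. Care is also needed at the endpoints $k=\pm 1$, where both the ``left'' and ``right'' $1$-foils are unknots after a Reidemeister I move; the count $\binom{n}{\frac{n-1}{2}}$ of each remains correct as a count of assignments, even though the two families coincide as knots.
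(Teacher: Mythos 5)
Your proposal is correct and is essentially the paper's own argument: the paper likewise signs each crossing (``positive slope'' vs.\ ``negative slope''), cancels adjacent opposite pairs by Reidemeister II until $|n-2\ell|$ crossings of one type remain, and counts assignments with $\binom{n}{\frac{n-k}{2}} = \binom{n}{\frac{n+k}{2}}$. Your $B_2 \cong \mathbb{Z}$ framing is a clean formalization of that same reduction rather than a different route, and your endpoint remark about $k = \pm 1$ matches the paper's treatment of the $1$-foil as the unknot.
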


\begin{proof}
Previously, we have counted a knot and its reflection in the same category, even if it is chiral. Here, we count a knot and its reflection as distinct knots. 

A foil knot can have two types of crossings, ``positive slope" and ``negative slope" crossings, where the referenced slope is that of the overstrand:

\begin{center}
\begin{tikzpicture}
\begin{scope}[scale=0.75]
\draw  (-.15, .15) -- (-1,1);
\draw (-1,-1) -- (1,1);
\draw (.15, -.15) -- (1,-1);

\draw  (4.5, -1) -- (2.5,1);
\draw  (3.65,.15) -- (4.5,1);
\draw (3.35, -.15) -- (2.5,-1);
\end{scope}
\end{tikzpicture}
\end{center}

Given a free $n$-foil, we may choose $\ell$ of its crossings to be positive slope crossings and the remaining $n-\ell$ to be negative slope crossings.  Then, if $0 < \ell < n$, we may find a positive crossing next to a negative crossing, and remove the pair via Reidemeister II.  Proceeding thus until all crossings of one type have been removed, we are left with either $n-2 \ell$ or $2 \ell -n$ crossings of a single type.  Substituting $\ell = (n-k)/2$ or $\ell=(n+k)/2$ produces the formula stated above.

\end{proof} 

We illustrate the above in the case of the free $5$-foil (all these drawings should be understood to connect the uppermost two corners to each other, and the lowermost two to each other).  If we choose zero positive slope crossings, we get the pentafoil:

\begin{center}
\begin{tikzpicture}
\foreach \x in {0,1,2,3,4} {
    \begin{scope}[xshift = \x cm]
        \draw (.25,0) -- (0,0);
        \draw (.25,.5) -- (0,.5);
        \draw (.25,0) -- (.4,.15);
        \draw (.25, .5) -- (.75, 0);
        \draw (.6, .35) -- (.75, .5);
        \draw (.75,0) -- (1,0);
        \draw (.75, .5) -- (1,.5);
    \end{scope}};
\end{tikzpicture}
\end{center}
There are ${5 \choose 1} = 5$ ways to choose one positive slope crossing.  One such diagram:
\begin{center}
\begin{tikzpicture}
\foreach \x in {0,1,3,4} {
    \begin{scope}[xshift = \x cm]
        \draw (.25,0) -- (0,0);
        \draw (.25,.5) -- (0,.5);
        \draw (.25,0) -- (.4,.15);
        \draw (.25, .5) -- (.75, 0);
        \draw (.6, .35) -- (.75, .5);
        \draw (.75,0) -- (1,0);
        \draw (.75, .5) -- (1,.5);
    \end{scope}};
\foreach \x in {2} {
    \begin{scope} [xshift = \x cm]
    	\draw (.25,0) -- (0,0);
    	\draw (.25,.5) -- (0,.5);
    	\draw (.25, 0) -- (.75, .5);
    	\draw (.25, .5) -- (.4, .35);
    	\draw (.6,.15) -- (.75,0);
    	\draw (.75,0)--(1,0);
    	\draw (.75, .5) -- (1,.5);
    \end{scope}};
\end{tikzpicture}
\end{center}
Now we can use Reidemeister II to remove two crossings, and produce a trefoil:
\begin{center}
\begin{tikzpicture}
\foreach \x in {0,1,3,4} {
    \begin{scope}[xshift = \x cm]
        \draw (.25,0) -- (0,0);
        \draw (.25,.5) -- (0,.5);
        \draw (.25,0) -- (.4,.15);
        \draw (.25, .5) -- (.75, 0);
        \draw (.6, .35) -- (.75, .5);
        \draw (.75,0) -- (1,0);
        \draw (.75, .5) -- (1,.5);
    \end{scope}};
\foreach \x in {2} {
    \begin{scope} [xshift = \x cm]
    	\draw (.25,0) -- (0,0);
    	\draw (.25,.5) -- (0,.5);
    	\draw (.25, 0) -- (.75, .5);
    	\draw (.25, .5) -- (.4, .35);
    	\draw (.6,.15) -- (.75,0);
    	\draw (.75,0)--(1,0);
    	\draw (.75, .5) -- (1,.5);
    \end{scope}};
\draw[red, dashed] (3, 0.25) ellipse (1 cm and 0.5 cm); 

\draw[->] (5.25, 0.25) -- (5.75,0.25);

\foreach \x in {6,7,8} {
    \begin{scope}[xshift = \x cm]
        \draw (.25,0) -- (0,0);
        \draw (.25,.5) -- (0,.5);
        \draw (.25,0) -- (.4,.15);
        \draw (.25, .5) -- (.75, 0);
        \draw (.6, .35) -- (.75, .5);
        \draw (.75,0) -- (1,0);
        \draw (.75, .5) -- (1,.5);
    \end{scope}};
\end{tikzpicture}
\end{center}
There are ${5 \choose 2}=10$ ways to choose two crossings to have positive slope.
All of these produce the unknot:
\begin{center}
\begin{tikzpicture}
\foreach \x in {0,1,4} {
    \begin{scope}[xshift = \x cm]
        \draw (.25,0) -- (0,0);
        \draw (.25,.5) -- (0,.5);
        \draw (.25,0) -- (.4,.15);
        \draw (.25, .5) -- (.75, 0);
        \draw (.6, .35) -- (.75, .5);
        \draw (.75,0) -- (1,0);
        \draw (.75, .5) -- (1,.5);
    \end{scope}};
\foreach \x in {2,3} {
    \begin{scope} [xshift = \x cm]
    	\draw (.25,0) -- (0,0);
    	\draw (.25,.5) -- (0,.5);
    	\draw (.25, 0) -- (.75, .5);
    	\draw (.25, .5) -- (.4, .35);
    	\draw (.6,.15) -- (.75,0);
    	\draw (.75,0)--(1,0);
    	\draw (.75, .5) -- (1,.5);
    \end{scope}};
\draw[red, dashed] (4, 0.25) ellipse (1 cm and 0.5 cm); 
\draw[blue, dashed] (2, 0.25) ellipse (1 cm and 0.5 cm); 

\draw[->] (5.25, 0.25) -- (5.75,0.25);

\foreach \x in {6} {
    \begin{scope}[xshift = \x cm]
        \draw (.25,0) -- (0,0);
        \draw (.25,.5) -- (0,.5);
        \draw (.25,0) -- (.4,.15);
        \draw (.25, .5) -- (.75, 0);
        \draw (.6, .35) -- (.75, .5);
        \draw (.75,0) -- (1,0);
        \draw (.75, .5) -- (1,.5);
    \end{scope}};
\end{tikzpicture}
\end{center}
If we choose three positive slope crossings, we can again apply two Reidemeister II moves to recognize that we have an unknot:
\begin{center}
\begin{tikzpicture}
\foreach \x in {1,4} {
    \begin{scope}[xshift = \x cm]
        \draw (.25,0) -- (0,0);
        \draw (.25,.5) -- (0,.5);
        \draw (.25,0) -- (.4,.15);
        \draw (.25, .5) -- (.75, 0);
        \draw (.6, .35) -- (.75, .5);
        \draw (.75,0) -- (1,0);
        \draw (.75, .5) -- (1,.5);
    \end{scope}};
\foreach \x in {0,2,3} {
    \begin{scope} [xshift = \x cm]
    	\draw (.25,0) -- (0,0);
    	\draw (.25,.5) -- (0,.5);
    	\draw (.25, 0) -- (.75, .5);
    	\draw (.25, .5) -- (.4, .35);
    	\draw (.6,.15) -- (.75,0);
    	\draw (.75,0)--(1,0);
    	\draw (.75, .5) -- (1,.5);
    \end{scope}};
\draw[red, dashed] (4, 0.25) ellipse (1 cm and 0.5 cm); 
\draw[blue, dashed] (1, 0.25) ellipse (1 cm and 0.5 cm); 

\draw[->] (5.25, 0.25) -- (5.75,0.25);

\foreach \x in {6} {
    \begin{scope} [xshift = \x cm]
    	\draw (.25,0) -- (0,0);
    	\draw (.25,.5) -- (0,.5);
    	\draw (.25, 0) -- (.75, .5);
    	\draw (.25, .5) -- (.4, .35);
    	\draw (.6,.15) -- (.75,0);
    	\draw (.75,0)--(1,0);
    	\draw (.75, .5) -- (1,.5);
    \end{scope}};
\end{tikzpicture}
\end{center}
When we choose four positive slope crossings, we again create a trefoil:
\begin{center}
\begin{tikzpicture}
\foreach \x in {2} {
    \begin{scope}[xshift = \x cm]
        \draw (.25,0) -- (0,0);
        \draw (.25,.5) -- (0,.5);
        \draw (.25,0) -- (.4,.15);
        \draw (.25, .5) -- (.75, 0);
        \draw (.6, .35) -- (.75, .5);
        \draw (.75,0) -- (1,0);
        \draw (.75, .5) -- (1,.5);
    \end{scope}};
\foreach \x in {0,1,3,4} {
    \begin{scope} [xshift = \x cm]
    	\draw (.25,0) -- (0,0);
    	\draw (.25,.5) -- (0,.5);
    	\draw (.25, 0) -- (.75, .5);
    	\draw (.25, .5) -- (.4, .35);
    	\draw (.6,.15) -- (.75,0);
    	\draw (.75,0)--(1,0);
    	\draw (.75, .5) -- (1,.5);
    \end{scope}};
    \draw[red, dashed] (2, 0.25) ellipse (1 cm and 0.5 cm); 

\draw[->] (5.25, 0.25) -- (5.75,0.25);

\foreach \x in {6,7,8} {
    \begin{scope} [xshift = \x cm]
    	\draw (.25,0) -- (0,0);
    	\draw (.25,.5) -- (0,.5);
    	\draw (.25, 0) -- (.75, .5);
    	\draw (.25, .5) -- (.4, .35);
    	\draw (.6,.15) -- (.75,0);
    	\draw (.75,0)--(1,0);
    	\draw (.75, .5) -- (1,.5);
    \end{scope}};
\end{tikzpicture}
\end{center}
And in choosing all 5 to be positive slope crossings, we again get a pentafoil.
\begin{center}
\begin{tikzpicture}
\foreach \x in {6,7,8,9,10} {
    \begin{scope} [xshift = \x cm]
    	\draw (.25,0) -- (0,0);
    	\draw (.25,.5) -- (0,.5);
    	\draw (.25, 0) -- (.75, .5);
    	\draw (.25, .5) -- (.4, .35);
    	\draw (.6,.15) -- (.75,0);
    	\draw (.75,0)--(1,0);
    	\draw (.75, .5) -- (1,.5);
    \end{scope}};
\end{tikzpicture}
\end{center}

Since the maximum value of the binomial coefficient occurs when producing unknots, it is easy to see that the unknot will have the highest resultant knot probability for any resultant of a member of the foil family. 

\begin{corollary}
The expected number of crossings for a resultant of an n-foil is $2^{1-n} \sum^n_{k=3} k {n \choose \frac{n-k}{2}}$, where the values of $k$ are odd.
\end{corollary}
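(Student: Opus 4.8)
The plan is to compute the expectation directly as an average over the $2^n$ equally likely crossing assignments, using Theorem \ref{Foils} to bookkeep how many assignments land on each knot type and then weighting each type by its crossing number. By definition,
$$
E = \frac{1}{2^n}\sum_{\text{assignments}} c(\text{resultant knot}),
$$
where $c(\cdot)$ denotes the crossing number, since each of the $2^n$ assignments of over/under data is equally likely.

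Next I would invoke Theorem \ref{Foils} to describe how the $2^n$ assignments distribute among knot types: for each odd $k$ with $1 \le k \le n$, exactly ${n \choose \frac{n-k}{2}}$ assignments yield a left $k$-foil and another ${n \choose \frac{n-k}{2}}$ yield a right $k$-foil, so that $2{n \choose \frac{n-k}{2}}$ assignments in total produce a knot of $k$-foil type. (As a consistency check, these counts sum to $2^n$ by the symmetry ${n \choose j} = {n \choose n-j}$ and the binomial theorem.) Because crossing number is an invariant insensitive to chirality, I may group the left and right $k$-foils together.

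I would then substitute the crossing numbers of the $k$-foils. The key input is that the $k$-foil is the $(2,k)$-torus knot, whose crossing number equals $k$ for every odd $k \ge 3$, while the $1$-foil is simply the unknot with crossing number $0$. Hence the $k=1$ term drops out of the sum, and the average collapses to
$$
E = \frac{1}{2^n}\sum_{\substack{k=3 \\ k\text{ odd}}}^{n} k \cdot 2{n \choose \frac{n-k}{2}} = 2^{1-n}\sum_{\substack{k=3 \\ k\text{ odd}}}^{n} k {n \choose \frac{n-k}{2}},
$$
exactly as claimed; the factor $2$ from combining both chiralities is absorbed into $2^{1-n}$.

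The only genuine subtlety, and the step I expect to require the most care, is the treatment of the $k=1$ case: one must remember that the $1$-foil straightens (via a Reidemeister I move) to the unknot and therefore contributes $0$ rather than $1$ to the crossing count, which is precisely why the summation index begins at $k=3$. Everything else is routine bookkeeping built on the distribution already established in Theorem \ref{Foils}.
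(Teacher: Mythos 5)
Your proposal is correct and follows essentially the same route as the paper's proof: both use Theorem \ref{Foils} to assign probability $\frac{2}{2^n}{n \choose \frac{n-k}{2}}$ to each chirality of $k$-foil, observe that the $1$-foil is the unknot and so contributes zero crossings, and sum over odd $k$ from $3$ to $n$. Your added justifications (the consistency check that the counts sum to $2^n$, and the identification of the $k$-foil as the $(2,k)$-torus knot with crossing number $k$) are sound details the paper leaves implicit.
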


\begin{proof}
Since foils produce an equal number of right and left $k$-foils, the probability of getting a resultant with $k$ crossings is $\frac{2}{2^n} {n \choose \frac{n-k}{2}}$. The 1-foil is the unknot, thus its probability has a coefficient of zero in the expectation value. Then the expectation value sum, over the allowed odd crossing values, starts at the trefoil and goes up to the $n$-foil.
\end{proof}

\begin{theorem} \label{TrefLimit}
The limit as the number of crossings $n$ goes to infinity of resultant $k$-foil probability ($k \leq n$, $k$ is odd) is 0 for foil knots.
\end{theorem}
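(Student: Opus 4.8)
The plan is to use the explicit probability formula derived from Theorem~\ref{Foils} and analyze its asymptotic behavior directly. From Theorem~\ref{Foils}, the resultant $k$-foil probability (counting both chiralities, and identifying mirror images as the paper does elsewhere) is
$$
P(\text{$n$-foil} \to \text{$k$-foil}) = \frac{2}{2^n}\binom{n}{\frac{n-k}{2}},
$$
so the theorem reduces to showing that $\displaystyle \lim_{n\to\infty} 2^{1-n}\binom{n}{\frac{n-k}{2}} = 0$ for each fixed odd $k$. First I would observe that since $k$ is fixed, the index $\frac{n-k}{2}$ stays within a bounded distance of the central value $\frac{n}{2}$, so we are looking at a near-central binomial coefficient. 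The natural tool is the standard estimate for the central binomial coefficient, $\binom{n}{\lfloor n/2\rfloor} \sim \frac{2^n}{\sqrt{\pi n/2}}$, which follows from Stirling's approximation.

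Next I would bound the coefficient of interest by the central one: for every $n$ and every $j$ we have $\binom{n}{j} \le \binom{n}{\lfloor n/2 \rfloor}$, since the binomial coefficients are maximized at the center. Applying this with $j = \frac{n-k}{2}$ gives
$$
2^{1-n}\binom{n}{\frac{n-k}{2}} \le 2^{1-n}\binom{n}{\lfloor n/2\rfloor} \le 2^{1-n}\cdot \frac{C\,2^n}{\sqrt{n}} = \frac{2C}{\sqrt{n}}
$$
for some absolute constant $C$ and all sufficiently large $n$. Since the right-hand side tends to $0$ as $n\to\infty$, and the probability is nonnegative, the squeeze theorem yields the claimed limit of $0$.

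\emph{The main obstacle} is really just bookkeeping rather than any deep difficulty: one must be careful that $n$ and $k$ have the same parity (both odd in the $n$-foil setting) so that $\frac{n-k}{2}$ is a genuine integer, and that the central-coefficient bound is invoked cleanly. The cleanest writeup avoids even needing the sharp Stirling constant: it suffices to know that $\binom{n}{\lfloor n/2\rfloor} = o(2^n)$, equivalently that the central term's \emph{share} of the total mass $2^n = \sum_j \binom{n}{j}$ vanishes. I would mention that this share going to zero is an instance of the fact that the binomial distribution spreads out, which also makes the result intuitively clear: as $n$ grows, the probability mass of a symmetric random walk (the crossing-sign choices) diffuses over $\Theta(\sqrt n)$ values, so any individual outcome, including the one corresponding to a fixed $k$-foil, captures a vanishing fraction. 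I would close by noting that the same estimate shows the convergence is uniform in $k$ over any fixed range, and in fact that the decay rate is $\Theta(1/\sqrt{n})$.
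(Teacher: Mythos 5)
Your proposal is correct, and it takes a cleaner route than the paper does. The paper also starts from the count $\frac{2}{2^n}\binom{n}{\frac{n-k}{2}}$ of Theorem~\ref{Foils}, but it then applies Stirling's approximation directly to the off-center coefficient $\binom{n}{\frac{n-k}{2}}$, producing an explicit $k$-dependent asymptotic expression and concluding by comparing the ``highest power of $n$'' in numerator and denominator --- a comparison that is somewhat loose as written, since terms like $(n-k)^{n/2}(n+k)^{n/2}$ are not literally polynomial in $n$ (one really needs $(1-k^2/n^2)^{n/2}\to 1$ for fixed $k$, which the paper does not spell out, and its intermediate algebra for the prefactor is garbled). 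You instead use the monotonicity bound $\binom{n}{j}\le\binom{n}{\lfloor n/2\rfloor}$ to reduce everything to the central binomial coefficient and invoke only the single standard estimate $\binom{n}{\lfloor n/2\rfloor}=O(2^n/\sqrt{n})$, then squeeze. What your approach buys: it sidesteps all $k$-dependent Stirling bookkeeping, it is rigorous at every step, the bound $2C/\sqrt{n}$ is manifestly uniform in $k$ (indeed over all odd $k\le n$, not just a fixed range, since $k$ never appears in the bound), and it exhibits the $\Theta(1/\sqrt{n})$ decay rate. What the paper's direct expansion would buy, if carried out carefully, is a sharper pointwise asymptotic $\frac{2}{2^n}\binom{n}{\frac{n-k}{2}}\sim\sqrt{\frac{8}{\pi n}}\,e^{-k^2/(2n)}$, which tracks how the probability varies with $k$; but for the stated theorem that precision is unnecessary, and your argument is the one I would recommend keeping. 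Your parenthetical parity remark is also apt: $n$ odd and $k$ odd make $\frac{n-k}{2}$ a genuine integer, so the bound applies without any floor-function fuss.
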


\begin{proof}
For large $m$, Stirling's approximation tells us 
$$ m! \sim \left(\frac{m}{e}\right)^m \sqrt{2 \pi m} $$
Then the binomial coefficient, for large $m$ and $\ell$ is
\begin{align*}
{m \choose \ell} = \frac{m!}{\ell! (m-\ell)!} &\sim \frac{\left(\frac{m}{e}\right)^m \sqrt{2 \pi m}}{\left(\frac{\ell}{e}\right)^\ell \sqrt{2 \pi \ell} \left(\frac{m-\ell}{e}\right)^{m-\ell} \sqrt{2 \pi (m-\ell)}} \\
&= \sqrt{\frac{m}{2 \pi \ell (m-\ell)}} \frac{m^m}{\ell^\ell (m-\ell)^{m-\ell}}
\end{align*}
And in particular,
\begin{align*}
{n \choose \frac{n-k}{2}} &\sim \sqrt{\frac{n}{2 \pi \frac{n-k}{2} \frac{n+k}{2}}} \frac{n^n}{\frac{n-k}{2}^{\frac{n-k}{2}} \frac{n+k}{2}^{\frac{n+k}{2}}} \\
&= \sqrt{\frac{2n}{\pi}} \frac{n-k}{(n+k)^2} \frac{(2n)^n}{(n-k)^{\frac{n}{2}}(n+k)^{\frac{n}{2}}}
\end{align*}

Then in the limit of the resultant probability for $k$-foils, an extra factor of $\frac{2}{2^n}$ leads to:
$$ \frac{2}{2^n} {n \choose \frac{n-k}{2}} \sim \sqrt{\frac{8}{\pi}} \frac{n-k}{(n+k)^2} \frac{n^{n+\frac{1}{2}}}{(n-k)^{\frac{n}{2}}(n+k)^{\frac{n}{2}}} $$
The highest power of $n$ occuring in the denominator is $n+2$, and is larger than the highest power of $n$ in the numerator (which is $n+\frac{3}{2}$).  Thus the resultant $k$-foil probability goes to zero as $n$ goes to infinity.
\end{proof}

\begin{conjecture} \label{TrefoilUpperBound}
Among all minimal prime free knot diagrams with $n$ and $n+1$ crossings, where $n$ is odd and $n \geq 3$, the foil knot with $n$ crossings has the most trefoil descendants.
\end{conjecture}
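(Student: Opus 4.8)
\section*{Proof proposal}

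The plan is to prove that no minimal prime free knot diagram on $n$ or $n+1$ crossings beats the explicit trefoil count of the $n$-foil. First I would record that count: applying Theorem \ref{Foils} with $k=3$ and combining the two chiralities (which the resultant probability identifies), the $n$-foil has exactly $2\binom{n}{(n-3)/2}$ trefoil descendants. I would then isolate the mechanism responsible for this number. Because all $n$ crossings of the foil sit in a single twist region, any assignment reduces by Reidemeister II to $|p-q|$ crossings, where $p$ and $q$ count the two crossing flavors; the result is a trefoil exactly when $|p-q|=3$, i.e. when $p\in\{(n-3)/2,(n+3)/2\}$. The fiber over the trefoil is thus a union of two Hamming shells, of total size the binomial above. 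The working hypothesis to be proved is that concentrating all crossings into one twist region makes this fiber as large as combinatorially possible.

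Next I would set up the count for an arbitrary diagram $D$. Viewing $D$ (when algebraic) as a Conway closure of a tangle assembled from maximal twist regions, opposite-flavor crossings within each region cancel in pairs, so the knot type of an assignment depends only on the vector of net twists of the regions together with the fixed way the regions are wired together. Consequently the number of trefoil-producing assignments is a weighted sum, over net-twist vectors whose assembled tangle closes to the trefoil, of products $\prod_i \binom{a_i}{\,\cdot\,}$, one binomial per region of size $a_i$. For rational $D$ the admissible net-twist vectors are exactly those whose continued fraction realizes $3/1$ in the sense of Lemma \ref{KnotFracEquiv}, which makes the sum explicit and finite.

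The heart of the argument is an extremal inequality: among all ways of partitioning a fixed number of crossings into twist regions and wiring them into a knot, the single-region foil maximizes the trefoil count. For a fixed total, splitting one region of size $a+b$ into two regions of sizes $a$ and $b$ replaces a single binomial $\binom{a+b}{s}$ by a Vandermonde convolution $\sum_{j}\binom{a}{j}\binom{b}{s-j}$; in the purely additive case this collapses back to $\binom{a+b}{s}$, giving equality, while any genuine interaction through tangle multiplication spreads the admissible net-twist vectors over continued fractions other than $3/1$ and strictly loses mass. Making this loss-of-mass statement precise and uniform is where I expect the real difficulty to lie, and for non-algebraic diagrams---where there is no clean twist-region decomposition---it may require a new estimate bounding, for an arbitrary realizable chord diagram on a given number of crossings, the number of sign assignments that reduce to a fixed three-crossing knot.

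Finally I would dispatch the $(n+1)$-crossing competitors. Since $n$ is odd, $n+1$ is even, so the $(n+1)$-foil is a two-component link and never competes as a knot; moreover a single twist region of even size produces $|p-q|$ even and hence never a trefoil, so in an $(n+1)$-crossing diagram every trefoil must arise from the interaction of at least two regions. By the extremal inequality such interaction counts are dominated by the concentrated odd configuration, so the extra crossing cannot overcome the foil's value $2\binom{n}{(n-3)/2}$. I would close by checking the small cases $n=3$ and $n=5$ against the computations already in hand, and I anticipate possibly needing a ``sufficiently complicated'' hypothesis, as in the companion conjectures, to rule out sporadic low-crossing coincidences.
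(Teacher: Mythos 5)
You should know at the outset that the statement you are attacking is stated in the paper only as Conjecture \ref{TrefoilUpperBound}: the authors give no proof, just the heuristic that a non-foil diagram pays an ``inherent cost'' to clear away the crossings that distinguish it from a foil, together with the appendix data, and they say explicitly that these conjectures ``do not seem accessible to prove at the moment.'' So there is no paper argument to match, and the question is whether your plan closes the gap. It does not, and the decisive missing piece is the one you yourself flag: the extremal ``loss-of-mass'' inequality for tangle multiplication is not a deferred technical lemma but is the entire content of the conjecture restated in the language of net-twist vectors. Your Vandermonde computation only treats the case where a twist region is split and re-joined additively, where the convolution collapses to the same binomial --- i.e., the case in which nothing has changed --- and no mechanism is offered for genuine multiplication. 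Moreover, for non-algebraic diagrams the framework is empty, since the maximal twist regions can all have size one (every binomial trivial), and the paper's own data shows the non-algebraic case is genuinely dangerous: $9_{40}$ is the lone counterexample to the companion figure-eight/trefoil conjecture, which is exactly why the authors insert the ``algebraic'' hypothesis there. Two smaller soundness issues in the rational case: a region with net twist $0$ degenerates the Conway assembly (adjacent regions merge), so ``admissible net-twist vectors realizing $3/1$ via Lemma \ref{KnotFracEquiv}'' requires case analysis you have not set up.

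There is also a concrete quantitative error in your final step. You compare the foil's raw count $2\binom{n}{(n-3)/2}$ against the $(n+1)$-crossing competitors, but those diagrams have $2^{n+1}$ assignments, and the comparison must be of probabilities, not counts. The paper's appendix already refutes the count version within the conjecture's scope: for $n=7$, the foil $7_1$ has $2\binom{7}{2}=42$ trefoils among $128$ assignments, while the prime $8$-crossing knot $8_{10}$ has trefoil probability $30.46875\%$, i.e.\ $78$ trefoils among $256$ assignments, and $78>42$. The conjecture survives only because $78/256 < 42/128$, so your claim that ``the extra crossing cannot overcome the foil's value $2\binom{n}{(n-3)/2}$'' is false as stated and must be rewritten as the inequality $\mathrm{count}/2^{n+1} \le 2\binom{n}{(n-3)/2}/2^{n}$ --- which makes the even-crossing case a genuine two-to-one fight rather than the easy dispatch you describe. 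Your parity observation (an even twist region alone never yields net twist $3$, and the $(n+1)$-foil is a link) is correct but again presumes a twist-region decomposition that arbitrary minimal prime diagrams need not possess. The small-case checks and the anticipated ``sufficiently complicated'' hypothesis are reasonable, but as it stands the proposal is a restatement of the conjecture in Conway-normal-form language plus the trivial additive case, not a proof strategy with an identified path through the hard step.
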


To create a trefoil, an assignment of crossings must create three alternating crossings aligned in a row. When trying to find assignments that create such a shape, the foil family clearly has an advantage, as it only needs a certain number of crossings isotoped away by the second Reidemeister relation and the free crossings assigned to be alternating. Meanwhile any other knot $K$ with $n$ crossings is not in that form and has some inherent cost in eliminating the complicating crossings that differentiate $K$ from the foils. Based on the calculated knot resultants, we expect this cost is high enough to prohibit the knot from creating more trefoils than the $n$-foil.

\begin{consequence} \label{MaxTref}
For prime knots, the absolute maximum k-foil probability is $2^{1-k^2} {k^2 \choose \frac{k^2-k}{2}}$, and consequently, the maximum trefoil percentage is 32.8125. 
\end{consequence}

\begin{proof}
We create a sequence $a_n$ calculating the resultant trefoil probability for $n$-foils, where $n$ must be incremented by 2.
\begin{equation*}
a_n = \frac{2{n \choose \frac{n-3}{2}}}{2^n} = \frac{1}{2^{n-1}} \frac{n!}{\frac{n-3}{2}! \frac{n+3}{2}!}
\end{equation*}
The sequence's next term is then
\begin{align*}
a_{n+2} &=  \frac{1}{2^{n+1}}  \frac{(n+2)!}{\frac{n-1}{2}! \frac{n+5}{2}!} \\
&= \frac{(n+2)(n+1)}{2^2 \frac{n-1}{2} \frac{n+5}{2} } \frac{1}{2^{n-1}} \frac{n!}{\frac{n-3}{2}! \frac{n+3}{2}!} \\
&= \frac{(n+2)(n+1)}{(n-1)(n+5)} a_n,
\end{align*}
and 
$a_n$ will be monotonically decreasing when

\begin{equation*}
\frac{(n+2)(n+1)}{(n-1)(n+5)} < 1 \text{ or } n>7
\end{equation*}

As for higher crossing members of the foil family, we define the generalized sequence $g_n$ as 
\begin{equation*}
g_n = \frac{2{n \choose \frac{n-k}{2}}}{2^n} =  \frac{1}{2^{n-1}} {n \choose \frac{n-k}{2}}
\end{equation*}
Using the same procedure as the special case of $k=3$ above for all foil knots,
\begin{equation*}
g_{n+2} = \frac{(n+2)(n+1)}{(n+2-k)(n+2+k)} g_n
\end{equation*}
This sequence will be monotonically decreasing when $n > k^2$ and monotonically increasing when $n<k^2-2$. 
Observing that $\displaystyle g_{ k^2-2}=g_{k^2}$, we have two occurences of the  maximum probability for a given k-foil:
\begin{equation*}
2^{3-k^2} {k^2-2 \choose \frac{k^2-k-2}{2}} = 2^{1-k^2} {k^2 \choose \frac{k^2-k}{2}}
\end{equation*}

Then for the trefoil ($k=3$), $7_1$ and $9_1$ have the maximum trefoil percentage of 32.8125.
\end{proof}

Conjecture \ref{TrefoilUpperBound} would also allows us to strengthen Theorem \ref{TrefLimit}'s results by applying it to all knots, forcing the limit of resultant $k$-foil probability as the number of crossings $n$ goes to infinity to be 0.

\begin{conjecture}
Let K be a minimal non-foil (or non-connected-sum-of-solely-foils) free knot diagram with four or more crossings. Then K has some assignment of crossings making the figure eight knot.
\end{conjecture}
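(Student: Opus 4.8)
The plan is to run the same engine as the trefoil-production theorem above, but aimed at a richer target configuration, using the non-foil hypothesis to supply the extra structure that a single twist region cannot. The figure eight is the closure of the rational $2\,2$ tangle (continued fraction $2 + 1/2 = 5/2$), i.e. its shape is the \emph{free $2\,2$ tangle closure}: two two-crossing twist regions stacked in the rational (non-connect-sum) way. The exclusion of foils and connected sums of foils is forced by the results already proved: by Theorem \ref{Foils} a free foil produces only foils and the unknot, and by Theorem \ref{knotsumpr} a connected sum produces only unknots, the summands' resultants, and their connected sums; since $4_1$ is prime and is not any $k$-foil, these diagrams genuinely cannot produce it. So the content of the conjecture is precisely that \emph{every other} minimal diagram can.

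First I would isolate the local pattern. I would check, by the same over/under assignment used for the trefoil, that if a free diagram contains a subconfiguration isotopic to a free $2\,2$ tangle, then making every complementary arc into a monotone downramp (so the exterior closes up trivially, as in the trefoil proof) and assigning the four interior crossings alternately yields exactly the alternating diagram of $4_1$. This is consistent with the recorded datum $P(4_1\to U)=12/16$, since the alternating assignments of the figure-eight shape are precisely the ones that survive as $4_1$. Amphichirality of $4_1$ means I need not track left/right versions separately.

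Next I would show that every minimal non-foil, non-connected-sum-of-foils diagram can be reduced to one containing a free $2\,2$ tangle, using crossing assignments together with Reidemeister II and mixed Reidemeister III, exactly the toolkit of the trefoil proof. The loop-shortening inductive step from that proof lets me shorten any loop two crossings at a time, so I may assume a loop of length three is present; trivializing its exterior as a single downramp already produces a trefoil. The figure eight should then appear via the mechanism that the two boundary strands emanating from the length-three loop are \emph{not} free to close up independently: if one additional essential crossing between these two strands is forced to survive the reduction, the local knot type is upgraded from the closure $C(3)$ of a bare three-twist to a two-bridge fraction such as $C(2,1,1)=5/3\sim 5/2$ or $C(2,2)=5/2$, both of which are $4_1$. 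I would make this precise by a face analysis of the $4$-valent graph around the length-three loop, showing which exterior arcs are compelled to be downramps and which boundary crossing cannot be cancelled by Reidemeister II.

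The hard part will be exactly this structural dichotomy: proving that the reduction terminates in an \emph{isolated} trivially-closed length-three loop if and only if the original diagram is a foil or a connected sum of foils. In the trefoil case the global structure is irrelevant, but the figure eight genuinely needs two twist regions interacting in a rational rather than a connect-sum fashion, so I must rule out the degenerate outcome in which the two boundary strands of the loop only ever meet at a connect-sum pinch (which would reproduce a connected sum of foils and never the prime knot $4_1$). I expect the correct invariant to be essentially braid index — the foils are exactly the braid-index-two diagrams, and the heuristic flagged in the introduction is that figure eights are universal once braid index three structure is present — so the cleanest route may be to argue that a non-foil, non-connect-sum diagram cannot be made, by downramp assignments alone, into a two-bridge diagram of a single twist, forcing a second surviving twist region. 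Turning this braid-index intuition into a combinatorial statement about faces of the free diagram is, I suspect, precisely the step that keeps this result at the level of a conjecture.
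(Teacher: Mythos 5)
You should know at the outset that the paper does not prove this statement: it appears there only as a conjecture, supported by the computed data through nine crossings and a heuristic remark about braid index (foils are the only knots of braid index two, and a minimal diagram of braid index three or more carries at least four free crossings, so ``seemingly there should be a route'' to a figure eight). So there is no proof of record, and your proposal --- as you candidly concede in your last paragraph --- is a plan rather than a proof. Your easy directions are sound and in places sharper than the paper's own remarks: Theorem \ref{Foils} does show foil shapes produce only foils, and combining Theorem \ref{knotsumpr} with the primality of $4_1$ cleanly rules out connected sums of foil shapes, whereas the paper's stated reason (that every resultant of such a sum has all crossings of one flavor) is murkier. Your local step --- a surviving free $2\,2$ configuration with trivialized exterior, assigned alternately, yields $4_1$ --- is believable, but even there you owe a case analysis of how the four boundary points of the $2\,2$ region connect in the exterior, parallel to the $A$, $B$, $C$, $D$ sub-cases of the trefoil theorem; the numerator closure is not automatic, and a bad exterior connection pattern can produce a link or a different two-bridge closure rather than $4_1$.

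The genuine gap is exactly the one you flag, and it is worth seeing why the trefoil machinery cannot supply it. The loop-shortening inductive step purchases its isotopies by assigning every crossing that the chosen strand meets, and nothing prevents those forced assignments from consuming precisely the second twist region you need to survive: the trefoil target requires only one essential twist region and is indifferent to global structure, while $4_1$ requires two crossings of each flavor arranged rationally rather than at a connect-sum pinch, a condition the downramp construction can destroy rather than create. Your braid-index reformulation does not obviously rescue this, since braid index is an invariant of the resultant knots, not of the free diagram, and translating ``non-foil, non-connect-sum-of-foils shape'' into a combinatorial statement about the faces of the $4$-valent graph that guarantees a surviving second twist region is exactly the missing content. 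In short: your exclusions are correct, your target configuration is the right one, but the core implication is untouched --- which is consistent with the statement remaining a conjecture in the paper.
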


Theorem \ref{Foils} confirms that all resultants of foil family knot diagrams are also foils, so no figure eights are produced. These knots appear to be the only ones without a figure eight resultant. This is sensible since they are the only knots with braid index of two.

\begin{definition}
A \emph{braid} on $n$ strands is a collection of non-crossing and monotonically increasing paths through 3-space connecting $n$ lower points to $n$ upper points.
The closure of a braid is the knot or link created by connecting the top points of the braid to the bottom points by going around and outside the braid, as shown below.
\end{definition}

\begin{center}
\begin{tikzpicture}
\draw (0,0) rectangle (1,1);
\draw (0.5,0.5) node{B};
\draw (0.1,0) -- (0.1, -0.5);
\draw (0.1,-0.5) arc [start angle=-180, end angle=0, radius=11mm];
\draw (0.3, 0) -- (0.3, -0.5);
\draw (0.3,-0.5) arc [start angle=-180, end angle=0, radius=9mm];
\draw(0.5,0) -- (0.5,-0.5);
\draw (0.5,-0.5) arc [start angle=-180, end angle=0, radius=7mm];
\draw(0.7,0) -- (0.7,-0.5);
\draw (0.7,-0.5) arc [start angle=-180, end angle=0, radius=5mm];
\draw(0.9,0) -- (0.9,-0.5);
\draw (0.9,-0.5) arc [start angle=-180, end angle=0, radius=3mm];
\draw (0.1,1) -- (0.1, 1.5);
\draw (0.3, 1) -- (0.3, 1.5);
\draw(0.5,1) -- (0.5,1.5);
\draw(0.7,1) -- (0.7,1.5);
\draw(0.9,1) -- (0.9,1.5);
\draw (0.9,1.5) arc [start angle=180, end angle=0, radius=3mm];
\draw (0.7,1.5) arc [start angle=180, end angle=0, radius=5mm];
\draw (0.5,1.5) arc [start angle=180, end angle=0, radius=7mm];
\draw (0.3,1.5) arc [start angle=180, end angle=0, radius=9mm];
\draw (0.1,1.5) arc [start angle=180, end angle=0, radius=11mm];
\draw (1.5,1.5) -- (1.5, -0.5);
\draw (1.7,1.5) -- (1.7, -0.5);
\draw (1.9,1.5) -- (1.9, -0.5);
\draw (2.1,1.5) -- (2.1, -0.5);
\draw (2.3,1.5) -- (2.3, -0.5);

\begin{scope}[xshift = 6 cm, yshift=-0.825 cm, rotate=90]
\draw (-.75,0) -- (0.5, 0);
\draw (-.75,0.5) -- (-.5,.5);
\draw (-.75,1) -- (-.5,1);
\draw (1,0.5) -- (1.5, 0.5);
\draw (2,1) -- (3.5,1);
\draw (0,1) -- (1.5,1);
\draw (2.5,0.5) -- (2,0.5);
\draw (1,0) -- (2.5,0);
\draw (3, 0.5) -- (3.5, 0.5);
\draw(0,0.5) -- (0.5,0.5);
\draw (3,0) -- (3.5, 0);

\draw (-.5,1) -- (-.35, .85);
\draw (0,.5) -- (-.15, .65);
\draw (-.5,.5) -- (0,1);

\draw (1.5,1) -- (1.65, 0.85);
\draw (2,0.5) -- (1.85, 0.65);
\draw (1.5, 0.5) -- (2,1);

\draw (0.65, 0.15) -- (0.5,0);
\draw (0.85, 0.35) -- (1,.5);
\draw(0.5,0.5) -- (1,0);

\draw (2.5,0)-- (2.65, .15);
\draw (2.85, 0.35) -- (3,0.5);
\draw (2.5,0.5) -- (3, 0);
\end{scope}
\end{tikzpicture}
\end{center}

The diagram on the right above is the simplest braid representation of the figure eight knot. Notice it has three component strands. The braid index of a knot is the least number of strands required to create a knot as the closure of a braid.  Thus the figure eight knot has a braid index of three.

If one of the outer strands of a minimal braid diagram was connected to its neighboring internal strand by only one crossing, the external strand could be removed via Reidemeister relation I (also called stabilization in the braid context), so a braid index greater than three forces the existence of four free crossings. Then seemingly there should be a route to unknotting some remaining crossings to produce a figure eight resultant. Based on our experimental evidence, we propose slightly stronger upper bounding on the unknot probability as this would require 4 nontrivial resultants of a given free knot diagram. 

While the connected sum of foil knots do have braid indices of three or greater, every resultant's crossings are of the same flavor, preventing any figure eight resultants, which have two crossings of each flavor.

\begin{conjecture}
The resultant figure eight probability is less than or equal to the resultant trefoil probability for free knot diagram coming from algebraic tangles.
\end{conjecture}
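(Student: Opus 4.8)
The plan is to reduce the statement to a single inequality between weighted counts of crossing assignments sorted by the knot \emph{determinant}, exploiting that $\det(3_1)=3$ and $\det(4_1)=5$. First I would apply the twist-region reduction used in the proof of Theorem \ref{Foils}. An algebraic tangle is a finite tree of twist regions combined by tangle addition and multiplication, and an assignment of the $c_i$ crossings in the $i$-th region reduces by Reidemeister II to a net twist $t_i\in\{-c_i,-c_i+2,\dots,c_i\}$ realized by exactly $\binom{c_i}{(c_i-t_i)/2}$ of the $2^{c_i}$ assignments of that region. Hence the resultant knot, and in particular its determinant, depends only on the integer vector $\vec t=(t_1,\dots,t_k)$ together with the fixed tree, and for any knot property $Q$,
$$
\#\{\text{assignments}\to Q\}=\sum_{\vec t\,\mapsto\,Q}\ \prod_{i=1}^{k}\binom{c_i}{\tfrac{c_i-t_i}{2}}=:\sum_{\vec t\,\mapsto\,Q}\mathrm{wt}(\vec t).
$$
For a rational tangle the reduced value is the continued fraction $P(\vec t)/Q(\vec t)$, where $P$ is the Euler continuant, and the determinant of the closure is $|P(\vec t)|$; for a genuinely Montesinos tree the determinant is the analogous multivariate continuant. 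The essential point is that $\det$ is a fixed integer-valued polynomial in $\vec t$, multilinear in each $t_i$.

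Second I would set up a chain of inequalities. Since every figure-eight resultant has determinant $5$, the set of figure-eight vectors is contained in $\{|P|=5\}$, so $\#4_1\le\sum_{|P(\vec t)|=5}\mathrm{wt}(\vec t)$. On the other side I would prove a lemma that, among the knots arising as algebraic-tangle resultants, determinant $3$ characterizes the trefoil: the trefoil is the unique such prime with determinant $3$, and no genuine composite intervenes because $\det$ is multiplicative and a composite of nontrivial arborescent factors has determinant $\ge 9$. This gives $\sum_{|P(\vec t)|=3}\mathrm{wt}(\vec t)=\#3_1$. The conjecture therefore follows from the single determinant inequality
$$
\sum_{\vec t:\,|P(\vec t)|=5}\mathrm{wt}(\vec t)\ \le\ \sum_{\vec t:\,|P(\vec t)|=3}\mathrm{wt}(\vec t).
$$
As a sanity check, in the one-region (foil) case $P(\vec t)=t_1$, so this reads $\binom{c_1}{(c_1-5)/2}\le\binom{c_1}{(c_1-3)/2}$, which holds because the right-hand index is strictly closer to the central index $c_1/2$; this is just monotonicity of binomial coefficients, and it recovers the (stronger, and correct) statement $\#5_1\le\#3_1$ for foils, where no figure eights occur.

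The heart of the problem, and the step I expect to be the main obstacle, is proving this determinant inequality for a general tree. The intuition is that smaller determinant corresponds to twist vectors nearer the ``balanced'' center of the weight distribution $\prod_i\binom{c_i}{\cdot}$, where the binomial weights are largest, so the $|P|=3$ level set should dominate the $|P|=5$ level set. But the continuant is multilinear and its level sets interleave badly, so no uniform local move realizes the comparison: the figure eight already arises from the incomparable vectors $(2,2)$ and $(3,-2)$ in the two-region rational case (both giving $|P|=|t_1t_2+1|=5$), and one checks directly that neither a single sign-change of one region nor a single crossing flip sends every determinant-$5$ vector to a determinant-$3$ vector (applied to $(3,-2)$ these moves land on determinant $7$ or determinant $1$, never $3$).

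I would therefore attempt an induction on the size of the tree, peeling off one twist region at a time. Conditioning on the net twists of all leaves but one reduces $P$ to an affine function $\alpha t_j+\beta$ of the remaining twist $t_j$, for which the determinant inequality becomes a comparison of single-region binomial sums over the arithmetic progression $t_j\in\{-c_j,\dots,c_j\}$; the base case is the two-region computation $(c_1,c_2)=(2,2)$, where $P=t_1t_2+1$ forces both sides to equal $2$ (probability $2/16$) and the bound is sharp. The delicate part is making this conditioning compatible with the continued-fraction equivalences of Lemma \ref{KnotFracEquiv}, which cause a single knot to be represented by several distinct $\vec t$ and thereby couple the level sets across leaves; controlling this coupling so that the peeled single-region inequality propagates up the tree is, I expect, exactly the difficulty that has kept the statement a conjecture.
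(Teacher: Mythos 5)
First, a point of order: the paper does not prove this statement. It is stated as a conjecture and left open, supported only by a local heuristic (every resolution of the free figure-eight $2\,2$ tangle that yields a figure eight is paired with one yielding a trefoil, while trefoils have additional routes) and by computer data, together with the non-algebraic counterexample $9_{40}$ that motivates the algebraic hypothesis. So there is no proof to compare against, and your proposal has to stand on its own. It does not, and the failure is not only in the step you flag as hard (the level-set inequality $\sum_{|P|=5}\mathrm{wt}\le\sum_{|P|=3}\mathrm{wt}$, which indeed remains unproven), but in a step you present as a lemma: that among algebraic-tangle resultants, determinant $3$ characterizes the trefoil. This is false. The knot $8_{19}$, the $(3,4)$-torus knot, is the pretzel $P(-2,3,3)$, hence algebraic/arborescent, and has determinant $3$; it genuinely arises as a resultant of an algebraic free diagram (its own 8-crossing shape, listed in the appendix, produces it with probability at least $2/2^{8}$). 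Your companion claim that no composite intervenes because ``a composite of nontrivial arborescent factors has determinant $\ge 9$'' also fails: nontrivial arborescent knots can have determinant $1$, e.g. $P(-2,3,5)=10_{124}$, so $3_1\# 10_{124}$ is a composite arborescent knot of determinant $3$. Consequently $\sum_{|P(\vec t)|=3}\mathrm{wt}(\vec t)$ exceeds the trefoil count whenever such knots occur, and even a full proof of your determinant inequality would only yield
$$\#4_1\ \le\ \#3_1+\#8_{19}+\cdots,$$
which is strictly weaker than the conjecture.

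The deeper reason this coarsening cannot work is that the conjectured inequality has no slack to absorb the extra determinant-$3$ (or determinant-$5$) mass: the paper's data show equality of trefoil and figure-eight probabilities for several algebraic shapes (e.g. the free $6_1$, a $2\,4$ diagram, and the free $8_3$, a $4\,4$ diagram, both at $12.5\%$ each), so any argument must account for trefoils and figure eights \emph{exactly}, not up to other knots sharing their determinants; the determinant forgets precisely the distinction ($4_1$ versus $5_1$ at determinant $5$, $3_1$ versus $8_{19}$ and $3_1\#10_{124}$ at determinant $3$) that the statement is about. A secondary, smaller point: the coupling through Lemma \ref{KnotFracEquiv} that you identify as the main difficulty is actually irrelevant to your level-set inequality, since the determinant is a knot invariant and is constant on Schubert-equivalent fractions; the equivalences matter only for the characterization lemma, which is where the proposal breaks. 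The twist-region weight decomposition $\mathrm{wt}(\vec t)=\prod_i\binom{c_i}{(c_i-t_i)/2}$ itself is sound and consistent with the paper's Theorem \ref{Foils} and its $2\,n$ and $2\,1\,n$ computations, but to make progress one would need to compare the actual knot-type level sets (say via the full fraction $P/Q$ for rational trees, not $|P|$ alone), which is essentially the open combinatorial problem the paper leaves as a conjecture.
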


The free figure eight diagram has 16 resultants: 12 unknots, 2 trefoils, and 2 figure eights. 
So for every free figure eight diagram, an equal number of trefoils and figure eights are created. 
However, trefoils may also be produced without going through a figure eight, 
so the trefoil probability can be greater than that of the figure eight. 

The hypothesis concerning algebraic tangles appears because we know of exactly one counterexample to this conjecture:
The knot $9_{40}$, which is not the closure of an algebraic tangle.  It has 66 resultant trefoils and 78 resultant figure eights. 
Yet this conjecture holds for all other free knots with nine or fewer crossings.

\begin{consequence}
For all algebraic free knot diagrams with n and $n+1$ crossings, where n is odd and $n \geq 3$, the trefoil probability of the $n$-foil is the upper bound on figure eight probability.
\end{consequence}

\begin{proof}
This extends the previous results from trefoils to figure eights using the above conjecture absolutely over all diagrams.
\end{proof}

The largest resultant figure eight percentage for all knots through 8 crossings is shared by $7_7$ and $8_{12}$ at 15.625\%, so there is likely a more restrictive bound to be found.

\begin{conjecture}
The absolute maximum resultant figure eight percentage of 15.625\%.
\end{conjecture}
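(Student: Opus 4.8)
The plan is to mirror the two-part strategy behind the trefoil bound in Consequence~\ref{MaxTref}: first isolate the family of minimal prime free knot diagrams that maximizes figure eight probability, then compute that family's figure eight resultant probability in closed form and maximize over the family. The data pin the maximizers to $7_7$ and $8_{12}$, and the fact that the identical value $5/32$ is attained at two consecutive crossing numbers strongly suggests a probability sequence that rises and then falls with two adjacent equal top terms --- exactly the phenomenon $g_{k^2-2}=g_{k^2}$ exploited for the foils. A guiding observation is that, just as the trefoil $3_1$ does not maximize its own trefoil probability (it sits strictly below the foil maximum attained at $7_1$ and $9_1$), the figure eight $4_1$ produces figure eights with probability only $2/16=1/8<5/32$; the true maximizers are more twisted diagrams in which surplus crossings cancel in pairs by Reidemeister~II, reducing to a figure eight diagram.

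Since the figure eight is the closure of the $2\,2$ tangle, an assignment yields a figure eight precisely when it reduces, via Reidemeister~II pair-cancellations and mixed Reidemeister~III moves, to a $2\,2$ diagram. I would therefore take the extremal family to be twisted closures built from a $2\,2$ clasp-and-twist region, determine from the data the exact structural feature shared by the free diagrams of $7_7$ and $8_{12}$, and then --- exactly as in the proof of Theorem~\ref{Foils}, where positive- and negative-slope crossings cancel in pairs --- count the sign patterns of the free crossings that survive cancellation to leave a $2\,2$ tangle. This should express the figure eight count as a sum of binomial coefficients of the ${n \choose \frac{n-k}{2}}$ type appearing in Theorem~\ref{Foils}. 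Forming the probability sequence and applying the term-ratio test of Consequence~\ref{MaxTref}, i.e.\ locating where the ratio of consecutive terms crosses $1$, should then show the maximum equals $5/32=0.15625$ and is realized at the crossing numbers of $7_7$ and $8_{12}$. To reduce the absolute statement to prime diagrams, I would first invoke the connected-sum analysis: Theorems~\ref{knotsumunk} and~\ref{knotsumpr} together with Theorem~\ref{strict} show that composite diagrams inherit their figure eight probability from their prime summands and that this probability decreases after a few connected sums, so the absolute maximum is attained at a prime diagram.

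The genuinely hard step is the extremality (upper bound) claim: that no free knot diagram beats this family. This is the figure eight analog of the still-open Conjecture~\ref{TrefoilUpperBound}, and it is strictly more delicate, because the knot $9_{40}$ --- which is not an algebraic-tangle closure --- already violates the weaker conjecture that figure eight probability never exceeds trefoil probability. Hence any absolute cap must either be restricted to algebraic diagrams or explicitly absorb such exceptions. A proof of extremality would require a quantitative \emph{cost} argument, showing that any structural deviation from the extremal $2\,2$ form strictly decreases the number of figure-eight-producing assignments; this parallels the heuristic given after Conjecture~\ref{TrefoilUpperBound} for trefoils but must be made rigorous, and it appears beyond current techniques. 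In its absence, the realistic deliverable is a conditional result --- the bound $5/32$ follows from the explicit family maximization once extremality is granted --- supplemented by an exhaustive computational check through a fixed crossing number, which already rules out spurious $9_{40}$-type counterexamples (note $9_{40}$ itself gives only $78/512<5/32$) and confirms the bound for all diagrams with at most nine crossings.
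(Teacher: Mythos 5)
The first thing to say is that the paper does not prove this statement: it appears only as a conjecture, supported by the computed tables (the maximum figure eight percentage through nine crossings is $15.625\%$, attained by $7_7$ and $8_{12}$, with $9_{40}$ at $15.234375\%$ and $9_{47}$ at $13.28125\%$ below it). So there is no paper proof to compare against, and your overall assessment --- that the extremality step is the genuinely open part, and that only a conditional family-maximization plus a finite computational check is currently available --- matches the authors' own stance. Your analogy with Consequence \ref{MaxTref} (a rise-then-fall sequence with two adjacent equal maxima explaining the tie between $7_7$ and $8_{12}$) is a plausible reading of the data, but be aware the paper never exhibits such a family, and the claim that $7_7$ and $8_{12}$ share a common twisted-$2\,2$ structure whose figure eight count is a binomial sum is itself unverified; your plan defers precisely the two steps (family identification and extremality) that carry all the content.

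There is also one concretely wrong step: the reduction to prime diagrams. You assert that Theorems \ref{knotsumunk} and \ref{knotsumpr} together with Theorem \ref{strict} show composites inherit their figure eight probability from prime summands, so the absolute maximum occurs at a prime diagram. The opposite is true. By Theorem \ref{knotsumpr}, $P(4_1\#4_1 \to 4_1) = 2\,(0.75)(0.125) = 18.75\% > 15.625\%$ --- this value appears in the paper's own composite table --- and by the closed formula $P(S^N \to K_2) = N\alpha^{N-1}\beta$ with $\alpha = 0.75$, $\beta = 0.125$, the recursive sums $(4_1)^3$ and $(4_1)^4$ reach approximately $21.09\%$. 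Theorem \ref{strict} only guarantees eventual decrease; since $\alpha = 0.75 > \tfrac{1}{2}$, Corollary \ref{firstMin} shows the sequence first \emph{increases}, which is exactly why the paper remarks, just before these conjectures, that its proposed trefoil and figure eight bounds ``apply to solely prime knots.'' So the absolute bound you set out to prove is false as stated for arbitrary diagrams; the conjecture must be read as restricted to (minimal) prime free knot diagrams, and your prime-reduction step should be deleted rather than repaired, since no connected-sum argument can rescue an unrestricted $15.625\%$ cap.
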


When moving beyond the algebraic knots, this number appears to remain the upper bound, as $9_{40}$ and $9_{47}$ have respective resultant figure eight percentages of 15.234375\% and 13.28125\%.

\section{The $k$ $n$  knots}\label{nk}

\begin{definition} 
The 2 $n$ knots, the closure of the 2 $n$ tangle, include the trefoil, the figure eight, $5_2, 6_1, 7_2, 8_1, 9_2$, and so on. 
\end{definition}

Unlike the foil knots, the lack of restriction on $n$ holds as any $n$ still results in a knot.  

\begin{center}
\begin{tikzpicture}
\draw (-.25, 2) -- (.25, 2.5);
\draw (.25, 2) -- (-.25, 2.5);

\draw (.25, 1.75) .. controls (.3, 1.875) .. (.25, 2);
\draw (-.25, 1.75) .. controls (-.3, 1.875) .. (-.25, 2);

\draw (-.25, 1.25) -- (.25, 1.75);
\draw (-.25, 1.75) -- (.25, 1.25);

\draw (-.25, 1.25) .. controls (-.35, 1.15) and (-2, .6) .. (-1.42,0);
\draw (.25, 1.25) .. controls (.35, 1.15) and (2, .6) .. (1.42,0);

\draw (-.25, 2.5) .. controls (-1, 3) and (-2.25, .15) .. (-1.42, -.5);
\draw (.25, 2.5) .. controls (1, 3) and (2.25, .15) .. (1.42, -.5);

\begin{scope}[yscale = -1]
\draw (-1.17,0) -- (-1.42,0);
\draw (-1.17,.5) -- (-1.42,.5);
\draw (-1.17,0) -- (-.67, .5);
\draw (-1.17,.5) -- (-.67,0);
\draw (-.67,0)--(-.42,0);
\draw (-.67, .5) -- (-.42,.5);

\node[draw=none] (ellipsis1) at (0.05,0.25) {$\cdots$};

\draw (1.17,0) -- (1.42,0);
\draw (1.17,.5) -- (1.42,.5);
\draw (1.17, 0) -- (.67, .5);
\draw (.67, 0) -- (1.17, .5);
\draw (.67,0) -- (.42,0);
\draw (.67, .5) -- (.42,.5);
\end{scope}

\end{tikzpicture}
\end{center}

These knots have continued fractions $n + \frac{1}{2} = \frac{2n+1}{2}$. Lemma \ref{KnotFracEquiv} then shows $2$ $n$ knots are nontrivial if $n\geq1$, as the unknot is a closure of the $\pm 1$ tangle. Similarly, it confirms 2 $m$ and 2 $n$ knots are distinct knots.

\begin{theorem}
The free $2$ $n$ knots produce $2^{n+1} + 2{n \choose \frac{n-1}{2}}$ unknots and $2{n \choose \frac{n-1}{2}}$ trefoils if $n$ is odd.  If $n$ is even, $2^{n+1} + 2{n \choose \frac{n}{2}}$ unknots are produced.
\end{theorem}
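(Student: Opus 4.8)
The plan is to decompose each assignment of the free $2\ n$ diagram according to what its two twist regions reduce to, and then read off the knot type from the resulting rational tangle fraction. The diagram has $n+2$ free crossings: the two crossings of the vertical clasp (the ``$2$'') and the $n$ crossings of the horizontal twist region (the ``$n$''). I would first treat the two regions separately using the same Reidemeister II cancellation that drives Theorem \ref{Foils}. An assignment of the $n$-crossing region with $p$ positive-slope and $n-p$ negative-slope crossings reduces, after cancelling adjacent opposite crossings, to a single horizontal twist region of net $b = 2p-n$ crossings; the number of assignments giving a fixed net value $b$ is $\binom{n}{(n+b)/2}$, and $b$ has the same parity as $n$. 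The clasp reduces similarly to net $a\in\{+2,0,-2\}$ with multiplicities $1,2,1$: both crossings equal give $\pm 2$, while the two mixed assignments cancel to $0$.

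The key case is $a=0$, which I would argue geometrically rather than through fractions. Tracing the connections in the diagram, the vertical clasp joins its two upper endpoints to its two lower endpoints, so once its crossings cancel by Reidemeister II the two strands run straight through, capping the left pair and the right pair of endpoints of the horizontal twist region. This is exactly the closure of the $n$-region that undoes all of its twists, so the result is the unknot \emph{no matter how the $n$-region is assigned} (this is the statement that a twist knot with its clasp removed is unknotted). Since the clasp opens in $2$ ways and the $n$-region may be assigned in all $2^n$ ways, this case alone contributes $2\cdot 2^n = 2^{n+1}$ unknots, the dominant term in both parities.

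When $a=\pm 2$ the reduced diagram is the closure of an honest rational tangle with Conway notation $b\ a$ and continued fraction $b+\tfrac1a = \tfrac{ab+1}{a}$, already in lowest terms since the numerator $ab+1$ is odd. By Lemma \ref{KnotFracEquiv}, together with our convention of identifying mirror images (which makes $K(3/1)$ and $K(3/2)$ the same trefoil), such a closure is the unknot exactly when $|ab+1|=1$ and the trefoil exactly when $|ab+1|=3$. For $a=2$ the unknot occurs at $b\in\{0,-1\}$ and the trefoil at $b\in\{1,-2\}$; for $a=-2$ the unknot occurs at $b\in\{0,1\}$ and the trefoil at $b\in\{2,-1\}$. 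Intersecting these values with the parity constraint $b\equiv n \pmod 2$ and summing the multiplicities $\binom{n}{(n+b)/2}$ gives, for odd $n$, an extra $2\binom{n}{(n-1)/2}$ unknots and exactly $2\binom{n}{(n-1)/2}$ trefoils (using $\binom{n}{(n+1)/2}=\binom{n}{(n-1)/2}$), and for even $n$ an extra $2\binom{n}{n/2}$ unknots from the surviving value $b=0$. Adding the $a=0$ contribution yields the stated totals.

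The step I expect to be most delicate is the $a=0$ analysis: a purely formal substitution into the continued fraction produces the indeterminate $n+\tfrac10$, and reading it carelessly as $K(n/1)$ would wrongly predict a $(2,n)$ torus knot instead of the unknot. The correct conclusion depends on seeing which of the two closures of the twist region actually appears when the clasp opens, so I would settle this case by the explicit strand-tracing above rather than by fractions. The remaining bookkeeping — the $\binom{n}{(n+b)/2}$ multiplicities, the parity restrictions, and the mirror-image identification needed to apply Lemma \ref{KnotFracEquiv} — is routine once the three reduction cases are in hand.
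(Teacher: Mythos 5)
Your proposal is correct, and its skeleton is the same as the paper's: split on whether the clasp cancels (the paper's ``twisting a loop'' observation, your $a=0$ case, giving $2^{n+1}$ unknots), then reduce the $n$-region by Reidemeister II to a net twist $b$ with multiplicity $\binom{n}{(n+b)/2}$, exactly as in Theorem \ref{Foils}. Where you genuinely diverge is the identification step. The paper finishes by drawing the four possible small diagrams (nontrivial clasp plus residual $\pm 1$ tangle, or fully cancelled bottom for even $n$) and recognizing unknots and trefoils by inspection, asserting nontriviality of the remaining cases ($|b|\geq 3$ with a nontrivial clasp) informally. You instead compute the fraction $(ab+1)/a$ of the reduced rational tangle and invoke Lemma \ref{KnotFracEquiv}, which buys two things the paper's argument does not make explicit: a rigorous certificate that every case with $|ab+1|\geq 5$ is neither an unknot nor a trefoil, and, essentially for free, the full classification of resultants (your fraction bookkeeping re-derives the paper's subsequent theorem counting $2\,k$ and $2\,(k{-}1)$ resultants). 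The cost is the dependence on Schubert's classification and the mirror-image convention needed to read $K(3/2)$ as a trefoil, both of which you handle correctly; your arithmetic for the surviving values of $b$ under the parity constraint $b\equiv n \pmod 2$ checks out in both parities. You are also right to single out the $a=0$ case as the one place fractions mislead: your strand-tracing conclusion (the clasp's cancellation caps the twist region so each twist is a Reidemeister I kink, hence the unknot for all $2^n$ assignments) is precisely the geometric fact the paper expresses via its loop-twisting construction, so that step is sound as argued.
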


\begin{proof}
One may think about generating these knots by repeatedly twisting a loop (applying Reidemester move 1 in the same region), and then looping the two ends together.  This construction method demonstrates that these knots should produce a lot of unknots! Any time the upper two strands can be separated, every assignment of the remaining $n$ crossings must give an unknot.  This yields $2^n$ different unknot diagrams. Since the vertical twist above has two different trivial assignments, we can reach a trivial tangle two ways, so we get $2^{n+1}$ unknots in this manner.    

To get the second term in the sum, we look at the case where the top two crossings do not cancel via a Reidemeister II move.

Suppose $n$ is odd.  If we choose $k<\frac{n-1}{2}$ crossings to be negative type, we are left with $n-2k>1$ positive type crossings along the bottom, and $2$ nontrivial crossings at the top; in other words, a nontrivial knot.  Similarly, if we choose $k>\frac{n+1}{2}$, the results are nontrivial.  So, consider $k=\frac{n-1}{2}$ and $k=\frac{n+1}{2}$.  
There are four different ways the top two crossings and the bottom crossing appear:

\begin{center}
\begin{tikzpicture}
\foreach \x in {0,2,4,6} {
    \begin{scope}[xshift = \x cm]	
        \draw (0,1) .. controls (-.5, 2.25) .. (0,3.5);
        \draw (0,1.5) .. controls (-.1, 1.75) .. (0,2);
        \draw (0, 2.5) .. controls (-.1, 2.75) .. (0,3);
        \draw (1,1) .. controls (1.5, 2.25) .. (1, 3.5);
        \draw (1, 1.5) .. controls (1.1, 1.75) .. (1,2);
        \draw (1,2.5) .. controls (1.1, 2.75) .. (1, 3);
    \end{scope}};
\foreach \y in {2,3} {
    \begin{scope}[xshift = 0 cm, yshift = \y cm]
        \draw (.25,0) -- (0,0);
        \draw (.25,.5) -- (0,.5);
        \draw (.25,0) -- (.4,.15);
        \draw (.25, .5) -- (.75, 0);
        \draw (.6, .35) -- (.75, .5);
        \draw (.75,0) -- (1,0);
        \draw (.75, .5) -- (1,.5);
    \end{scope}};
\foreach \y in {1} {
    \begin{scope} [xshift = 0 cm, yshift = \y cm]
    	\draw (.25,0) -- (0,0);
    	\draw (.25,.5) -- (0,.5);
    	\draw (.25, 0) -- (.75, .5);
    	\draw (.25, .5) -- (.4, .35);
    	\draw (.6,.15) -- (.75,0);
    	\draw (.75,0)--(1,0);
    	\draw (.75, .5) -- (1,.5);
    \end{scope}};
\foreach \y in {1,2,3} {
    \begin{scope}[xshift = 2 cm, yshift = \y cm]
        \draw (.25,0) -- (0,0);
        \draw (.25,.5) -- (0,.5);
        \draw (.25,0) -- (.4,.15);
        \draw (.25, .5) -- (.75, 0);
        \draw (.6, .35) -- (.75, .5);
        \draw (.75,0) -- (1,0);
        \draw (.75, .5) -- (1,.5);
    \end{scope}};
\foreach \y in {1} {
    \begin{scope}[xshift = 4 cm, yshift = \y cm]
        \draw (.25,0) -- (0,0);
        \draw (.25,.5) -- (0,.5);
        \draw (.25,0) -- (.4,.15);
        \draw (.25, .5) -- (.75, 0);
        \draw (.6, .35) -- (.75, .5);
        \draw (.75,0) -- (1,0);
        \draw (.75, .5) -- (1,.5);
    \end{scope}};
\foreach \y in {2,3} {
    \begin{scope} [xshift = 4 cm, yshift = \y cm]
    	\draw (.25,0) -- (0,0);
    	\draw (.25,.5) -- (0,.5);
    	\draw (.25, 0) -- (.75, .5);
    	\draw (.25, .5) -- (.4, .35);
    	\draw (.6,.15) -- (.75,0);
    	\draw (.75,0)--(1,0);
    	\draw (.75, .5) -- (1,.5);
    \end{scope}};
\foreach \y in {1,2,3} {
    \begin{scope} [xshift = 6 cm, yshift = \y cm]
    	\draw (.25,0) -- (0,0);
    	\draw (.25,.5) -- (0,.5);
    	\draw (.25, 0) -- (.75, .5);
    	\draw (.25, .5) -- (.4, .35);
    	\draw (.6,.15) -- (.75,0);
    	\draw (.75,0)--(1,0);
    	\draw (.75, .5) -- (1,.5);
    \end{scope}};
\end{tikzpicture}
\end{center}
The first and third cases are unknots; the second and fourth, trefoils.

To summarize, we have ${n \choose \frac{n-1}{2}}+{n \choose \frac{n+1}{2}}$ ways to make the free $n$ tangle at the bottom into a $1$ tangle, and for each of these, one nontrivial assignment of the top crossings produces an unknot, and the other nontrivial assignment produces a trefoil.  Observing that ${n \choose \frac{n-1}{2}} = {n \choose \frac{n+1}{2}}$ gives the $2 {n \choose \frac{n-1}{2}}$ term in the sum and for the trefoil.

If $n$ is even, and we choose $k < \frac{n}{2}$ crossings to be negative type, we are left with $n-2k>1$ positive type crossings  along the bottom, and $2$ nontrivial crossings at the top; in other words, a nontrivial knot.  Similarly, if we choose $k>\frac{n}{2}$, the results are nontrivial.  So, consider $k=\frac{n}{2}$.  All the crossings in bottom tangle may be removed by repeated Reidemester II moves.  The resulting knot diagram has only two crossings and hence is trivial.   As there are two ways to make the top crossings nontrivial, there are $2{n \choose \frac{n}{2}}$ additional ways to make diagrams of the unknot.
\end{proof}

We can go further in our understanding of exactly which knots show up, with what frequency, in assignments of crossings to free $2$ $n$ knots.

\begin{theorem}
A free $2$ $n$ knot produces $2{n \choose \frac{n-k}{2}}$ resultant $2$ $k$ knots and $2{n \choose \frac{n-k}{2}}$ resultant $2$ $k-1$ knots, where $n$ and $k$ must either be both even or both odd. \emph{(To calculate the resultant probability for an even/odd resultant with $\ell$ crossings when both $n$ and $k$ are odd/even, let $k=\ell+1$.)}
\end{theorem}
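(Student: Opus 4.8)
The plan is to handle the $n+2$ crossings of the free $2$ $n$ knot by separating them into the two crossings of the vertical $2$-twist at the top and the $n$ crossings of the horizontal twist region at the bottom, exactly as in the proof of the preceding theorem. First I would record that, for the bottom region, choosing $a$ of the $n$ crossings to be positive-slope and the remaining $n-a$ to be negative-slope allows repeated Reidemeister II cancellations to reduce the region to a net horizontal twist of $t = 2a - n$ crossings; the number of assignments producing a fixed net value $t$ is $\binom{n}{(n-t)/2}$, where $t$ ranges over the integers of the same parity as $n$ between $-n$ and $n$. For the top region, two of its four assignments cancel to the $0$-tangle (the separable case already accounted for in the previous theorem, producing only unknots), while the remaining two produce a genuine clasp, one giving a $+2$ vertical twist and one giving a $-2$ vertical twist.

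Next I would recombine the two regions into a single rational tangle and read off its continued fraction. A $+2$ clasp over a net twist of $t$ yields $t + \frac12 = \frac{2t+1}{2}$, while a $-2$ clasp yields $t - \frac12 = \frac{2t-1}{2}$. Invoking the definition of the $2$ $k$ knots as the closures $K\!\left(\frac{2k+1}{2}\right)$, this identifies the resultant of a $+2$ clasp over net twist $t$ as the $2$ $t$ knot, and the resultant of a $-2$ clasp over net twist $t$ as the $2$ $(t-1)$ knot. I would then use Lemma \ref{KnotFracEquiv}, together with the standing convention that a knot and its mirror image are not distinguished, to record the one identification that does the real work: since the mirror of $K\!\left(\frac{2t+1}{2}\right)$ is $K\!\left(\frac{-2t-1}{2}\right)$, the $2$ $t$ knot equals the $2$ $(-t-1)$ knot.

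The counting step then fixes a target index $k$ of the same parity as $n$ and tallies which pairs $(s,t)$, with $s\in\{+2,-2\}$, produce the $2$ $k$ knot. The parity constraint $t\equiv n \pmod 2$ is what keeps the bookkeeping honest: for the $2$ $k$ knot, only the pairs $(+2,\,t=k)$ and $(-2,\,t=-k)$ survive the parity test, and by the symmetry $\binom{n}{(n+k)/2}=\binom{n}{(n-k)/2}$ each contributes $\binom{n}{(n-k)/2}$, for a total of $2\binom{n}{(n-k)/2}$. An identical analysis for the $2$ $(k-1)$ knot (whose index has parity opposite to $n$) shows that the surviving pairs are $(+2,\,t=-k)$ and $(-2,\,t=k)$, again summing to $2\binom{n}{(n-k)/2}$. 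The parenthetical reindexing in the statement is just the observation that the two families $2$ $k$ and $2$ $(k-1)$ carry opposite parities, so to name a resultant $2$ $\ell$ knot whose parameter $\ell$ has the opposite parity to $n$ one sets $k=\ell+1$ and reads off the $2$ $(k-1)$ count.

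The main obstacle I anticipate is not the Reidemeister II reduction, which is routine, but the mirror-image and parity bookkeeping in the final step: one must verify that each genuine-clasp resultant is counted once and only once, that the two surviving pairs for a given target are exactly the mirror pair guaranteed by the general remark that resultants occur in mirror pairs, and that the degenerate small cases (for instance the pairs with $\frac{2t\pm1}{2}=\pm\frac12$, where $p=1$ and the resultant collapses to the unknot, i.e.\ the $2$ $0$ knot) stay consistent with the separate unknot count of the previous theorem. A useful sanity check I would build in is that, taking $n$ odd and $k=1$, the $2$ $1$ (trefoil) and $2$ $0$ (unknot) counts each reduce to $2\binom{n}{(n-1)/2}$, matching the trefoil term and the non-separable unknot term already computed. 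Confirming that Lemma \ref{KnotFracEquiv} certifies all and only the coincidences $2\,t \equiv 2\,(-t-1)$, with no spurious extra identifications among the $\frac{2t\pm1}{2}$, is the delicate point.
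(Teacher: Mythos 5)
Your proof is correct, but it runs along a genuinely different track from the paper's. The paper argues diagrammatically: it counts the $2\binom{n}{\frac{n-k}{2}}$ assignments of the free $n$ tangle that reduce to a net $\pm k$ twist, observes that one nontrivial assignment of the top two crossings makes the whole diagram alternating (giving the $2$ $k$ knot outright), and then handles the other, non-alternating assignment by an explicit isotopy --- running a pictured maneuver in reverse to create a pair of crossings removable by Reidemeister II, which deletes one crossing from the twist region and exhibits the $2$ $(k-1)$ knot. You instead bypass the isotopy entirely: after the same Reidemeister II reduction of the bottom region to a net twist $t$, you read off the Conway fraction $\frac{2t\pm1}{2}$ of the resulting rational tangle, identify the closures via the fraction calculus, and do the final tally with the mirror identification $2\,t = 2\,(-t-1)$ plus Lemma \ref{KnotFracEquiv}. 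Your tallies agree with the paper's in every cell: the paper's pair (bottom $+k$, top $-2$) is your $(-2,\,t=k)$ giving fraction $\frac{2(k-1)+1}{2}$, and so on. What your route buys is rigor at a point the paper leaves implicit: Schubert's classification certifies that $K\!\left(\frac{p}{2}\right) \cong K\!\left(\frac{p'}{2}\right)$ forces $p = p'$, so the only coincidences among resultants are the mirror pairs, and the counted knots for distinct $k$ really are distinct --- the paper's diagrammatic proof never verifies this. It also makes the parenthetical parity reindexing in the statement transparent. What the paper's route buys is constructive content: it shows concretely how the $2$ $(k-1)$ diagram arises from the non-alternating assignment, in the same visual idiom used throughout Sections \ref{nk} and \ref{21n}, without appealing to the two-bridge classification. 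Your anticipated ``delicate point'' is indeed the right one to flag, and it resolves exactly as you suggest, since $2 \equiv 2 \bmod p$ always holds in the lemma's criterion.
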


\begin{proof}
Choose $k$ crossings of the connected $n$ tangle, creating $2{n \choose \frac{n-k}{2}}$ possible assignments of an alternating lower tangle for each nontrivial assignment of the upper two crossings. If connecting the upper two crossings creates an alternating knot, the 2 $k$ knot will be formed. However, the other assignment of the 2 crossings will not be alternating. The crossings on either end of the tangle will now both be connected to the same type of the crossing on the higher free 2 tangle.  

\begin{center}
\begin{tikzpicture}
\draw (-.25, 2) -- (.25, 2.5);
\draw (-.25, 2.5) -- (-.1, 2.35);
\draw (.25, 2) -- (.1, 2.15);

\draw (.25, 1.75) .. controls (.3, 1.875) .. (.25, 2);
\draw (-.25, 1.75) .. controls (-.3, 1.875) .. (-.25, 2);

\draw (-.25, 1.75) -- (-.1, 1.6);
\draw (-.25, 1.25) -- (.25, 1.75);
\draw (.25, 1.25) -- (.1, 1.4);

\draw (-.25, 1.25) .. controls (-.35, 1.15) and (-2, .6) .. (-1.42,.5);
\draw (.25, 1.25) .. controls (.35, 1.15) and (2, .6) .. (1.42,.5);

\draw (-.25, 2.5) .. controls (-1, 3) and (-2.25, .15) .. (-1.42, 0);
\draw (.25, 2.5) .. controls (1, 3) and (2.25, .15) .. (1.42, 0);

\draw (-1.17,0) -- (-1.42,0);
\draw (-1.17,.5) -- (-1.42,.5);
\draw (-1.17, 0) -- (-1.02, .15);
\draw (-1.17,.5) -- (-.67,0);
\draw (-.67, .5) -- (-.82, .35);
\draw (-.67,0)--(-.42,0);
\draw (-.67, .5) -- (-.42,.5);

\node[draw=none] (ellipsis1) at (0.05,0.25) {$\cdots$};

\draw (1.17,0) -- (1.42,0);
\draw (1.17,.5) -- (1.42,.5);
\draw (1.17, 0) -- (.67, .5);
\draw (.82,.15) -- (.67,0);
\draw (1.02, .35) -- (1.17, .5);
\draw (.67,0) -- (.42,0);
\draw (.67, .5) -- (.42,.5);

\end{tikzpicture}
\end{center}

Using the following maneuver in reverse will slightly complicate the knot and create a pair of crossings which we can eliminate with a Reidemeister relation II. Recall that the overstrand will be the same on every crossing within the alternating $n$ tangle.

\begin{center}
\begin{tikzpicture}
\begin{scope}
\draw (.25, .-.5) -- (.1, -.65);
\draw (-.25,-.5) -- (.25,-1);
\draw (-.25, -1) -- (-.1, -.85);
\draw (.5,-0.25) -- (.65, -.1);
\draw (1, -.25) -- (.5, .25);
\draw (1, .25) -- (.85, .1);
\draw (-.65, .1) -- (-.5, .25);
\draw (-.5,-0.25) -- (-1, .25);
\draw (-1, -.25) -- (-.85, -.1);

\draw (-.25, -.5) -- (-.5, -.25);
\draw (.25, -.5) -- (.5, -0.25);
\draw (-.5, .25) .. controls (0,.75) .. (.5, .25);
\draw (-1, .45) .. controls (0, 1.4) .. (1,.45);
\draw (-1, .25) .. controls (-1.1, .35) .. (-1, .45);
\draw (1, .25) .. controls (1.1, .35) .. (1, .45);

\draw (-.25,-1) -- (-.425, -1.175);
\draw (.25, -1) -- (.425, -1.175);

\draw (-1,-.25) -- (-1.175, -.425);
\draw (1,-.25) -- (1.175, -.425);

\draw[semithick, dashed] (0, 0) circle (1.25 cm);
\end{scope}

\draw [thick, <->] (1.5,0) -- (2,0);

\begin{scope}[xshift = 3.5cm]
\draw (-.25, -.5) .. controls (0, -.75) .. (-.425, -1.175);
\draw (.25, -.5) .. controls (0, -.75) .. (.425, -1.175);
\draw (.65, .1) -- (.5, .25);
\draw (.5,-0.25) -- (1, .25);
\draw (1, -.25) -- (.85, -.1);
\draw (-.5,-0.25) -- (-.65, -.1);
\draw (-1, -.25) -- (-.5, .25);
\draw (-1, .25) -- (-.85, .1);

\draw (-.25, -.5) -- (-.5, -.25);
\draw (.25, -.5) -- (.5, -0.25);
\draw (-.5, .25) .. controls (0,.75) .. (.5, .25);
\draw (-1, .45) .. controls (0, 1.4) .. (1,.45);
\draw (-1, .25) .. controls (-1.1, .35) .. (-1, .45);
\draw (1, .25) .. controls (1.1, .35) .. (1, .45);

\draw (-1,-.25) -- (-1.175, -.425);
\draw (1,-.25) -- (1.175, -.425);

\draw[semithick, dashed] (0, 0) circle (1.25 cm);
\end{scope}

\draw (5.35,0) node {$\implies$};

\begin{scope}[xshift = 8.25 cm, yshift = -1.25 cm]
\draw (-.25, 2) -- (-.1, 2.15);
\draw (.25, 2) -- (-.25, 2.5);
\draw (.25, 2.5) -- (.1, 2.35);

\draw (.25, 1.75) .. controls (.3, 1.875) .. (.25, 2);
\draw (-.25, 1.75) .. controls (-.3, 1.875) .. (-.25, 2);

\draw (-.25, 1.25) -- (-.1, 1.4);
\draw (-.25, 1.75) -- (.25, 1.25);
\draw (.25, 1.75) -- (.1, 1.6);

\draw (.25, 1.25) .. controls (.35, 1.15) and (2, .6) .. (1.42,.5);
\draw (.25, 2.5) .. controls (1, 3) and (2.25, .15) .. (1.42, 0);

\draw (-1.4, 2.125) -- (-1.25, 1.975);
\draw (-1.4, 1.625) -- (-.9, 2.125);
\draw (-.9, 1.625) -- (-1.05, 1.775);

\draw (-.9, 2.125) .. controls (-.45, 2.6) .. (-.25, 2.5);
\draw (-.9, 1.625) .. controls (-.425, 1.2) .. (-.25, 1.25);

\draw (-1.4, 1.625) .. controls (-1.6, 1.425) and (-2.25, .6) .. (-1.42, .5);
\draw (-1.4, 2.125) .. controls (-2, 2.75) and (-3, .25) .. (-1.42, 0);

\draw (-1.17,0) -- (-1.42,0);
\draw (-1.17,.5) -- (-1.42,.5);
\draw (-1.17, 0) -- (-1.02, .15);
\draw (-1.17,.5) -- (-.67,0);
\draw (-.67, .5) -- (-.82, .35);
\draw (-.67,0)--(-.42,0);
\draw (-.67, .5) -- (-.42,.5);

\node[draw=none] (ellipsis1) at (0.05,0.25) {$\cdots$};

\draw (1.17,0) -- (1.42,0);
\draw (1.17,.5) -- (1.42,.5);
\draw (1.17, 0) -- (.67, .5);
\draw (.82,.15) -- (.67,0);
\draw (1.02, .35) -- (1.17, .5);
\draw (.67,0) -- (.42,0);
\draw (.67, .5) -- (.42,.5);
\end{scope}

\end{tikzpicture}
\end{center}

Upon simplification, one crossing on the tangle will be removed and the 2 $k-1$ knot will be formed. From this, we get $2{n \choose \frac{n-k}{2}}$ of the knot 2 $k$ and $2{n \choose \frac{n-k}{2}}$ of the knot 2 $k-1$. 
\end{proof}

Notice a term in the formula for produced trefoils and unknots, when $n$ is odd, is a special case of this theorem. The trefoil here would be called 2 1 and the unknot 2 0, so each would be produced $2{n \choose \frac{n-1}{2}}$ times from a generic 2 $n$ knot shape when the upper two crossings are not immediately trivializable. 

\begin{corollary}
The expected number of crossings for a resultant of a $2$ $n$ knot is $2^{-(n+1)} \sum^n_{k=2} (2k+3) {n \choose \frac{n-k}{2}}$, where the values of $k$ and $n$ are even, and $2^{-(n+1)} \left(3{n \choose \frac{n-1}{2}} + \sum^n_{k=3} (2k+3) {n \choose \frac{n-k}{2}} \right)$, where the values of $k$ and $n$ are odd.
\end{corollary}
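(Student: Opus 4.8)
The plan is to compute the expectation directly, by weighting each resultant produced by the two preceding structure theorems against its crossing number and normalizing by the total number of assignments. First I would record that the free $2$ $n$ knot carries $n+2$ free crossings (two in the $2$ tangle, $n$ in the $n$ tangle), so there are $2^{n+2}$ equally likely assignments; this is the common denominator, and it accounts for the prefactor $2^{-(n+1)}=2\cdot 2^{-(n+2)}$ once the ubiquitous factor of $2$ in the counts $2\binom{n}{\frac{n-k}{2}}$ is pulled out of the sum.

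Next I would pin down the crossing number of each resultant. The $2$ $k$ knot has continued fraction $\frac{2k+1}{2}$ and its standard twist-knot diagram on $k+2$ crossings is reduced alternating — these are exactly the already-named knots $3_1,4_1,5_2,6_1,\dots$ — so by minimality of reduced alternating diagrams its crossing number is $k+2$; likewise the $2$ $(k-1)$ knot has crossing number $(k-1)+2=k+1$. The crux of the bookkeeping is the single degenerate pair: the smallest admissible $k$ in each parity produces a member that collapses to the unknot. For $n$ odd, the pairing at $k=1$ yields the trefoil $2$ $1$ together with $2$ $0$, and $2$ $0$ is the unknot (crossing number $0$, not $k+1=2$); for $n$ even, the pairing at $k=0$ yields only unknots.

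With these ingredients I would assemble the expectation. By the second $2$ $n$ theorem, each admissible $k$ (of the same parity as $n$) contributes $2\binom{n}{\frac{n-k}{2}}$ copies of the crossing-$(k+2)$ knot $2$ $k$ and $2\binom{n}{\frac{n-k}{2}}$ copies of the crossing-$(k+1)$ knot $2$ $(k-1)$, so whenever neither member degenerates the combined weight is $\bigl((k+2)+(k+1)\bigr)2\binom{n}{\frac{n-k}{2}}=(2k+3)\,2\binom{n}{\frac{n-k}{2}}$. The remaining resultants — the $2^{n+1}$ unknots from the trivializable top pair (first $2$ $n$ theorem) and the degenerate small-$k$ unknots — all have crossing number $0$, so they vanish from the numerator while remaining in the denominator. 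Dividing by $2^{n+2}$ and extracting the factor $2$ gives $2^{-(n+1)}\sum_{k=2}^{n}(2k+3)\binom{n}{\frac{n-k}{2}}$ in the even case; in the odd case the degenerate $k=1$ pair contributes $(k+2)\cdot 2\binom{n}{\frac{n-1}{2}}=3\cdot 2\binom{n}{\frac{n-1}{2}}$ rather than $(2k+3)\cdot2\binom{n}{\frac{n-1}{2}}$, which is precisely the isolated term $3\binom{n}{\frac{n-1}{2}}$ standing outside the sum.

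I expect the main obstacle to be organizational rather than deep: isolating the boundary terms so that the clean weight $(2k+3)$ applies exactly on the range where both members of the pair $\{2\,k,\,2\,(k-1)\}$ are genuine nontrivial knots, and confirming that all the unknots are tallied with the correct multiplicity. As a consistency check I would verify that the probabilities sum to $1$: for $n$ odd this reads $2^{n+1}+\sum_{k\text{ odd}}4\binom{n}{\frac{n-k}{2}}=2^{n+1}+4\cdot 2^{n-1}=2^{n+2}$, using $\sum_{j=0}^{(n-1)/2}\binom{n}{j}=2^{n-1}$, while for $n$ even the degenerate pair at $k=0$ contributes only $2\binom{n}{n/2}$ unknots, giving $2^{n+1}+2\binom{n}{n/2}+\sum_{k\geq 2\text{ even}}4\binom{n}{\frac{n-k}{2}}=2^{n+2}$ via $\sum_{j=0}^{n/2-1}\binom{n}{j}=\tfrac12\bigl(2^n-\binom{n}{n/2}\bigr)$. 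Matching the totals both validates the enumeration and guarantees the expectation is taken against a genuine probability distribution.
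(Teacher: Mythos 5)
Your proof is correct and follows essentially the same route as the paper's: pair the $2$ $k$ and $2$ $(k-1)$ resultants of shared probability $\tfrac{2}{2^{n+2}}\binom{n}{\frac{n-k}{2}}$, weight each pair by $(k+2)+(k+1)=2k+3$, and handle the degenerate boundary cases separately ($k=0$ yielding only unknots for even $n$; the trefoil--unknot pairing at $k=1$ for odd $n$, which produces the isolated $3\binom{n}{\frac{n-1}{2}}$ term). Your two additions --- justifying the crossing numbers via minimality of reduced alternating diagrams and verifying that the enumerated resultants total $2^{n+2}$ --- are sound extra rigor that the paper's terser write-up omits, but they do not change the argument.
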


\begin{proof}
For even values of $n$, resultants with $k+2$ and $k+1$ crossings have the same probability of $\frac{2}{2^{n+2}}  {n \choose \frac{n-k}{2}}$. Combining these pairs of resultants into a single term, we get a factor of $2k+3$ for the total number of crossings for a particular duplicated probability. For even $n$, the first pair, the trefoils and the figure eights, has 7 total crossings so the sum will start at $k=2$,  going until $n$, skipping odd numbers already covered by the combined nature of each term. 

For odd values of $n$, resultants with $k+2$ and $k+1$ crossings again share probabilities, but the trefoil is now partnered with the unknot, placing its contribution outside the sum derived for even values of $n$. This sum must first start with the 9 total crossings between the pentafoil and figure eights, so the sum begins at $k=3$, now passing over even numbers.
\end{proof}

\begin{theorem}
The limit of resultant unknot probability as the number of crossings $n$ ($k \leq n$) goes to infinity is 0.5 for 2 n knots.
\end{theorem}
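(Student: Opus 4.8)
The plan is to combine the explicit unknot count from the preceding theorem with Stirling's approximation, exactly as in the proof of Theorem~\ref{TrefLimit}. First I would record that the $2$~$n$ tangle has $n+2$ crossings, so its free closure has $2^{n+2}$ total assignments. Dividing the unknot counts from the earlier theorem by this total gives, when $n$ is odd,
$$ \frac{2^{n+1} + 2\binom{n}{\frac{n-1}{2}}}{2^{n+2}} = \frac{1}{2} + \frac{\binom{n}{\frac{n-1}{2}}}{2^{n+1}}, $$
and, when $n$ is even, the same expression with $\binom{n}{n/2}$ in place of $\binom{n}{\frac{n-1}{2}}$. In either case the unknot probability is exactly $\tfrac{1}{2}$ plus a correction term equal to a central (or nearly central) binomial coefficient divided by $2^{n+1}$. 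As a sanity check, $n=1$ gives $\tfrac12 + \tfrac{1}{4} = 0.75$ (the trefoil) and $n=2$ gives $\tfrac12 + \tfrac14 = 0.75$ (the figure eight), matching the known values.

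Next I would show that this correction term vanishes in the limit. Using Stirling's approximation $m! \sim (m/e)^m\sqrt{2\pi m}$ as in Theorem~\ref{TrefLimit}, one obtains the standard estimate
$$ \binom{n}{\lfloor n/2\rfloor} \sim \sqrt{\frac{2}{\pi n}}\, 2^n, $$
so that the correction term satisfies
$$ \frac{\binom{n}{\lfloor n/2 \rfloor}}{2^{n+1}} \sim \frac{1}{2}\sqrt{\frac{2}{\pi n}} \longrightarrow 0 $$
as $n \to \infty$. For odd $n$ the coefficient $\binom{n}{\frac{n-1}{2}}$ is itself the maximal (central) coefficient $\binom{n}{\lfloor n/2\rfloor}$, and for even $n$ the coefficient $\binom{n}{n/2}$ is exactly central, so the same estimate applies directly in both parity cases.

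Combining the two steps, the unknot probability tends to $\tfrac{1}{2} + 0 = \tfrac{1}{2}$, as claimed. The only substantive point is the asymptotic bound $\binom{n}{\lfloor n/2\rfloor}/2^{n} \to 0$, which is routine once Stirling's formula is invoked; indeed, the analogous computation has already been carried out for the foils in Theorem~\ref{TrefLimit}. I expect no genuine obstacle here, since the surplus of unknots over the baseline $2^{n+1}$ is only of order $2^n/\sqrt{n}$, which is asymptotically negligible against the denominator $2^{n+2}$. The main subtlety is purely bookkeeping: tracking the parity of $n$ correctly and remembering the two extra crossings contributed by the top $2$ tangle when counting the denominator $2^{n+2}$.
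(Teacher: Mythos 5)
Your proposal is correct and takes essentially the same route as the paper: both divide the exact unknot counts from the preceding theorem by $2^{n+2}$ to get $\tfrac{1}{2} + \binom{n}{\lfloor n/2\rfloor}/2^{n+1}$ in each parity case, then kill the correction term via the Stirling asymptotics already established for the foils. If anything, your version is more explicit than the paper's, which merely cites the earlier asymptotic computation where you spell out $\binom{n}{\lfloor n/2\rfloor} \sim \sqrt{2/(\pi n)}\,2^n$.
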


\begin{proof}
The resultant unknot probabilities for a 2 $n$ knot are 

$$  \frac{1}{2} + \frac{1}{2^{n+1}} {n \choose \frac{n}{2}}, \text{	    } \frac{1}{2} + \frac{1}{2^{n+1}} {n \choose \frac{n-1}{2}} $$

\noindent when $n$ is even and odd, respectively. Applying the asymptotic behavior of the binomial coefficient found in Theorem \ref{TrefLimit} for $n=0$ and $n=1$ and noting the same cancellation of the powers of 2 occur here, the second terms in the limit disappear and only the $\frac{1}{2}$ term remains for both odd and even $n$.
\end{proof}

\begin{conjecture}
The free $2$ $n$ knots realize the upper bound on the unknots for minimal free knot diagrams with n+2 crossings.
\end{conjecture}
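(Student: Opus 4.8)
The plan is to reduce the conjecture to a bound on the raw unknot count. Write $U(D)$ for the number of the $2^{n+2}$ assignments of a minimal free diagram $D$ on $m=n+2$ crossings that resolve to the unknot; since every diagram with $m$ crossings shares the denominator $2^{m}$, the probability statement is equivalent to showing
$$ U(D) \le 2^{n+1} + 2\binom{n}{\lfloor n/2\rfloor} $$
for every minimal $D$ with $n+2$ crossings, with equality realized by the free $2$ $n$ knot via the count already established for it. The right-hand side splits naturally into a \emph{clasp term} $2^{n+1}$ and a \emph{twist term} $2\binom{n}{\lfloor n/2\rfloor}$, and I would try to match this decomposition on the general side: bound the unknots coming from a single ``trivializing'' local feature by $2^{n+1}$, and bound the remaining unknots by the central binomial coefficient.

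The mechanism to formalize first is the clasp. In the $2$ $n$ knot the entire unknot surplus above the twist term comes from a two-crossing tangle two of whose four assignments open the diagram into a single unknotting circle, freeing the other $n$ crossings; this yields exactly $2\cdot 2^{n}=2^{n+1}$ guaranteed unknots. The first step is to prove that under minimality this is the most any single removable feature can supply: a crossing whose \emph{single} assignment trivializes everything would be a length-one loop, which minimality forbids, so the cheapest trivializer is a clasp, opening in only $2$ of $4$ ways and thus capping at $2^{n+1}$. The second step is to bound the residual unknots — those surviving when no trivializer is opened — by the twist term, using Theorem \ref{Foils}: a twist region on $j$ crossings contributes unknots only through balanced $\pm$ assignments, and among all ways of spending $n$ crossings on twisting, the balanced central count $\binom{n}{\lfloor n/2\rfloor}$ is maximal. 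An alternative, dual route is the one flagged after the trefoil theorem: lower-bound the number of forced nontrivial resultants (refining the theorem that every minimal diagram with three or more crossings produces a trefoil so that it counts trefoils with multiplicity), since an upper bound on unknots is exactly a lower bound on everything else.

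The hard part will be diagrams that are not assembled from one clasp and one twist region, for which there is at present no handle linking the global count $U(D)$ to the local collapse features above. The clean obstruction is \emph{stacking}: two disjoint trivializing clasps each unknot the whole diagram, so naively their opened assignments could push $U(D)$ above $2^{n+1}$, and ruling this out requires showing that any second trivializer forces enough nontrivial resultants elsewhere (via inclusion--exclusion on the opened/closed states) to cancel the gain. A natural framework is an exchange argument: define moves that coalesce crossings into a common twist region or slide a crossing into the clasp, and prove each move is unknot-monotone and minimality-preserving, driving any $D$ to the $2$ $n$ normal form; but no such monotone move is currently known, and monotonicity seems genuinely delicate.

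For this reason I would first prove the conjecture in the algebraic case, where tangle continued-fraction data and Lemma \ref{KnotFracEquiv} make resultant counts tractable, and only afterward attempt the general statement. The warning here is the knot $9_{40}$, already an exception to the analogous figure-eight bound and not the closure of an algebraic tangle: its existence suggests that any clean monotonicity or stacking estimate may require an algebraicity or bounded-braid-index hypothesis, and that the fully general conjecture — the reason it is stated as a conjecture rather than a theorem — will hinge on understanding precisely how non-algebraic diagrams distribute their unknot weight.
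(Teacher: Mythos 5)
The first thing to say is that the paper does not prove this statement: it is stated as a conjecture, supported by the exact count for the free $2$ $n$ family (Theorem 5.3's $2^{n+1}+2\binom{n}{\lfloor n/2\rfloor}$ unknots) and by the tabulated data in the appendix, where the $2$ $n$ shapes ($4_1$, $5_2$, $6_1$, $7_2$, $8_1$, $9_2$) do realize the maximum unknot percentage at each crossing number. So there is no paper proof to compare against, and your proposal should be judged as a research plan. As such, your reduction to the raw-count inequality $U(D)\le 2^{n+1}+2\binom{n}{\lfloor n/2\rfloor}$ is correct and your right-hand side matches the paper's count (for odd $n$, $\binom{n}{(n-1)/2}=\binom{n}{\lfloor n/2\rfloor}$), but the plan does not close, and the gaps are more serious than your framing suggests.

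The central unsupported step is the clasp-versus-twist decomposition of $U(D)$ for an \emph{arbitrary} minimal diagram. Your capping lemma argues only that no \emph{single crossing} can be a trivializer (minimality forbids length-one loops), and concludes that "the cheapest trivializer is a clasp, opening in $2$ of $4$ ways." That inference fails: a trivializing feature can be any subtangle on $j$ crossings, and nothing rules out one in which more than $2^{j-1}$ of its $2^j$ assignments free the complement — indeed the paper's own $2$ $1$ tangle analysis in Section 6 shows a three-crossing region where four of eight assignments collapse to the $0$ tangle, already saturating the $1/2$ ratio, and no argument is offered that this ratio cannot be exceeded elsewhere. Worse, most unknot resultants of a generic diagram do not arise from "opening a local trivializer and twisting the rest" at all (the paper's own unknotting-number proof produces $2n$ unknots by a global climb/downramp construction with no local collapse), so the residual unknots are not controlled by Theorem \ref{Foils}, which applies only to genuine twist regions. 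Your honest flagging of the stacking obstruction and of the missing monotone exchange move identifies real problems, but the proposal offers no candidate lemma for either; the inclusion--exclusion cancellation you invoke is exactly where the difficulty lives (in the $k$ $n$ theorem the paper must subtract the double-counted term $\binom{n}{n/2}\binom{k}{k/2}$ by hand, and no analogous bookkeeping exists for general diagrams). Your fallback of first treating the algebraic case via Lemma \ref{KnotFracEquiv} is sensible and consistent with the paper's caution about $9_{40}$, but note the conjecture as stated carries no algebraicity hypothesis (unlike the trefoil lower-bound and figure-eight conjectures), so even a complete algebraic argument would not discharge it. In short: the reduction and the target inequality are right, the plan correctly identifies the hard points, but each of its three load-bearing steps — the clasp cap, the stacking cancellation, and the unknot-monotone normal-form moves — is currently a conjecture in its own right, which is presumably why the paper leaves the statement open.
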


\begin{consequence}
The absolute maximum value for the unknot percentage from a nontrivial knot is 75\%.
\end{consequence}

\begin{proof}
Similar to Consequence \ref{MaxTref}, we define infinite series $e_n$ and $o_n$ equivalent to the probability of the unknot for the 2 $n$ knots for even and odd $n$.

\begin{equation*}
e_n = \frac{1}{2} + \frac{1}{2^{n+1}} {n \choose \frac{n}{2}}, \text{	    } o_n = \frac{1}{2} + \frac{1}{2^{n+1}} {n \choose \frac{n-1}{2}}
\end{equation*}

\noindent These series are monotonically decreasing if every term is less than equal to the term that preceded it. Again, since we only are using even and odd $n$, each sequence's rank can only increase by 2. Using this, we can see:

\begin{align*}
e_{n+2}-e_n &= \frac{1}{2} + \frac{1}{2^{n+3}} \frac{(n+2)!}{\frac{n+2}{2}! (n+2 - \frac{n+2}{2})!} - \frac{1}{2} - \frac{1}{2^{n+1}} \frac{n!}{\frac{n}{2}! (n- \frac{n}{2})!} \\
&=\frac{1}{2^2}  \frac{n+2}{(\frac{n+2}{2})^2} \left(e_n - \frac{1}{2}\right)- \left(e_n - \frac{1}{2}\right)
\end{align*}

This is less than or equal to 0 when

\begin{equation*}
\frac{1}{2^2}  \frac{n+2}{(\frac{n+2}{2})^2} \leq 1 \text{ or } n \geq -1.
\end{equation*}

Similar analysis of $o_n$ results in the inequality $n^2 +3n+1 \geq 0$. Both of these inequalities are greater than zero for the positive values of $n$ (and $n=0$), so the series are both monotonically decreasing for the unknot and trefoil on. Thus, the maximum probability will come in the beginning, with 100\% for unknot structure 2 0 and then dropping immediately to 75\% for the trefoil, the first nontrivial member of the 2 $n$ family.
\end{proof}

\begin{conjecture}
The free 2 n knots are the lower bound for trefoils for minimal prime algebraic free knot diagrams with n+2 crossings.
\end{conjecture}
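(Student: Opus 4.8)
The plan is to compare the trefoil output of the free $2$ $n$ diagram, which we have already computed exactly, against that of an arbitrary competing minimal prime algebraic free knot diagram with $n+2$ crossings. From the formula established above, the free $2$ $n$ knot produces $2\binom{n}{(n-1)/2}$ trefoils when $n$ is odd and none when $n$ is even, so the resultant trefoil probability we must show is smallest is $2\binom{n}{(n-1)/2}/2^{n+2}$ (respectively $0$). The even case is the more approachable end: there the claim is simply that the $2$ $n$ diagram produces no trefoils at all, which is immediate from the resultant classification above (every resultant is a $2$ $k$ knot with $k$ even, and the trefoil is $2$ $1$), so that the remaining work reduces to showing every other even-crossing competitor produces a nonnegative count — trivially true, with the real content being that no competitor can also reach $0$ unless it shares the $2$ $n$ structure.

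First I would set up an induction on the construction tree of the algebraic tangle whose closure is the competing diagram. By definition an algebraic tangle is built from integer tangles by the two operations of tangle addition and tangle multiplication; the base case of a single integer tangle is exactly the foil family, whose complete resolution is given by Theorem \ref{Foils}, and there the trefoil count is $2\binom{n}{(n-3)/2}$, already visibly larger than the $2$ $n$ count at the same crossing number. For the inductive step I would analyze how each tangle operation transforms the set of trefoil-producing assignments, aiming for a monotonicity statement: attaching a further tangle summand or factor never drives the trefoil probability below the value realized by collapsing all that structure into a single run of twists.

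The key technical device I would try to build is an injection from trefoil assignments of the $2$ $n$ diagram into trefoil assignments of an arbitrary competitor. The trefoil assignments of $2$ $n$ are rigidly described: one assigns the bottom free $n$ tangle so that Reidemeister II reduces it to a single crossing — which happens in exactly $2\binom{n}{(n-1)/2}$ ways, by the same foil-reduction argument used in Theorem \ref{Foils} — and then assigns the top two crossings to their unique non-cancelling alternating configuration. The hope is that in any more branched algebraic diagram the extra tangle structure only supplies additional, essentially independent sites at which such a three-crossing alternating block can be assembled, so that the competitor inherits at least this many trefoil assignments and typically strictly more. This dovetails with the companion Conjecture \ref{unknotconjecture} that $2$ $n$ maximizes unknots: intuitively, the diagram spending the most of its $2^{n+2}$ assignments on unknots has the least probability left to place on any single nontrivial knot, and the trefoil is the cheapest nontrivial knot to produce.

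The hard part will be making the injection respect Reidemeister reduction across an arbitrary algebraic construction tree. There is no a priori monotonicity guaranteeing that complicating a tangle never destroys a trefoil-producing assignment while creating new ones: tangle multiplication in particular entangles crossings that were previously independent, so an assignment yielding three clean alternating crossings inside a subtangle can be disrupted once that subtangle is multiplied into a larger one. Controlling this requires understanding precisely how the Reidemeister reduction of an assigned algebraic tangle interacts with the continued-fraction arithmetic of Lemma \ref{KnotFracEquiv}, and it is exactly this interaction — rather than any single counting identity — that resists a clean argument and keeps the statement at the level of a conjecture.
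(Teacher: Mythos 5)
This statement is a conjecture in the paper: the authors offer no proof, only supporting data, the observation that the bound is attained with ties (the $2$ $5$ knot $7_2$ shares its trefoil percentage $15.625\%$ with $7_4$ and $7_7$), and the remark that the non-algebraic knot $9_{40}$ is a genuine counterexample forcing the ``algebraic'' hypothesis. Your proposal also does not prove it --- you concede yourself that the monotonicity/injection step cannot be closed --- so there is no complete argument on either side to compare; but your outline contains a concrete factual error that would sink the strategy even as a plan. You assert that for even $n$ the free $2$ $n$ diagram produces no trefoils because ``every resultant is a $2$ $k$ knot with $k$ even.'' This contradicts the paper's own resultant classification: the free $2$ $n$ knot produces $2\binom{n}{(n-k)/2}$ resultant $2$ $k$ knots \emph{and} $2\binom{n}{(n-k)/2}$ resultant $2$ $(k-1)$ knots, so taking $k=2$ yields $2\binom{n}{(n-2)/2}$ trefoils (the trefoil is $2$ $1$). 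The appendix confirms this: $4_1 = 2\,2$, $6_1 = 2\,4$, and $8_1 = 2\,6$ have trefoil percentages $12.5\%$, $12.5\%$, and $11.71875\%$, all nonzero. So the even case is not the ``approachable end'' you describe; the conjectured lower bound is a nontrivial positive quantity for every parity. Relatedly, your description of the odd-case trefoil assignments as ending with ``their unique non-cancelling alternating configuration'' of the top two crossings is off: there are two nontrivial assignments of the top $2$-tangle, one producing the unknot and one the trefoil, which is exactly how the paper splits the count.

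The deeper structural problem is that your key device --- an injection from trefoil assignments of the $2$ $n$ diagram into trefoil assignments of an arbitrary competitor, justified by ``extra, essentially independent sites at which a three-crossing alternating block can be assembled'' --- makes no use of the algebraic hypothesis, and therefore cannot work as stated. The knot $9_{40}$ is a minimal prime free diagram with $9$ crossings whose trefoil percentage ($12.8906\%$) lies strictly below that of the $2$ $7$ knot $9_2$ ($13.6719\% = 2\binom{7}{3}/2^9$); any structure-blind injection argument would apply to $9_{40}$ and prove something false. A viable approach must isolate exactly where algebraicity (closure under tangle addition and multiplication, per the construction-tree induction you sketch) enters, and must also accommodate the fact that the bound is achieved with equality by diagrams outside the $2$ $n$ family ($7_4$, $7_7$), so no strict inequality can hold in the inductive step. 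Your closing paragraph correctly identifies that tangle multiplication destroys the independence your injection needs; that, together with the even-case error and the $9_{40}$ obstruction, is why this remains a conjecture rather than a theorem.
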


As so many assignments of the free 2 $n$ knot are forced to be the unknot, this does not leave much room for trefoils.  If true, the free 2 $n$ knots are not the only knots that realize the lower bound. For example, $7_2$ shares the same resultant trefoil percentage, 15.625\%, with $7_4$ and $7_7$. Again, we insert the algebraic knot qualifier for the lone counterexample of $9_{40}$.

Between the two trefoil bounding conjectures, the space of possible trefoil probabilities would be known. The following plot shows these possible ranges of probabilities for free knots up to 75 crossings in between those for the $n$-foils (blue) and 2 $n$ knots (orange). 

\includegraphics[scale=0.8]{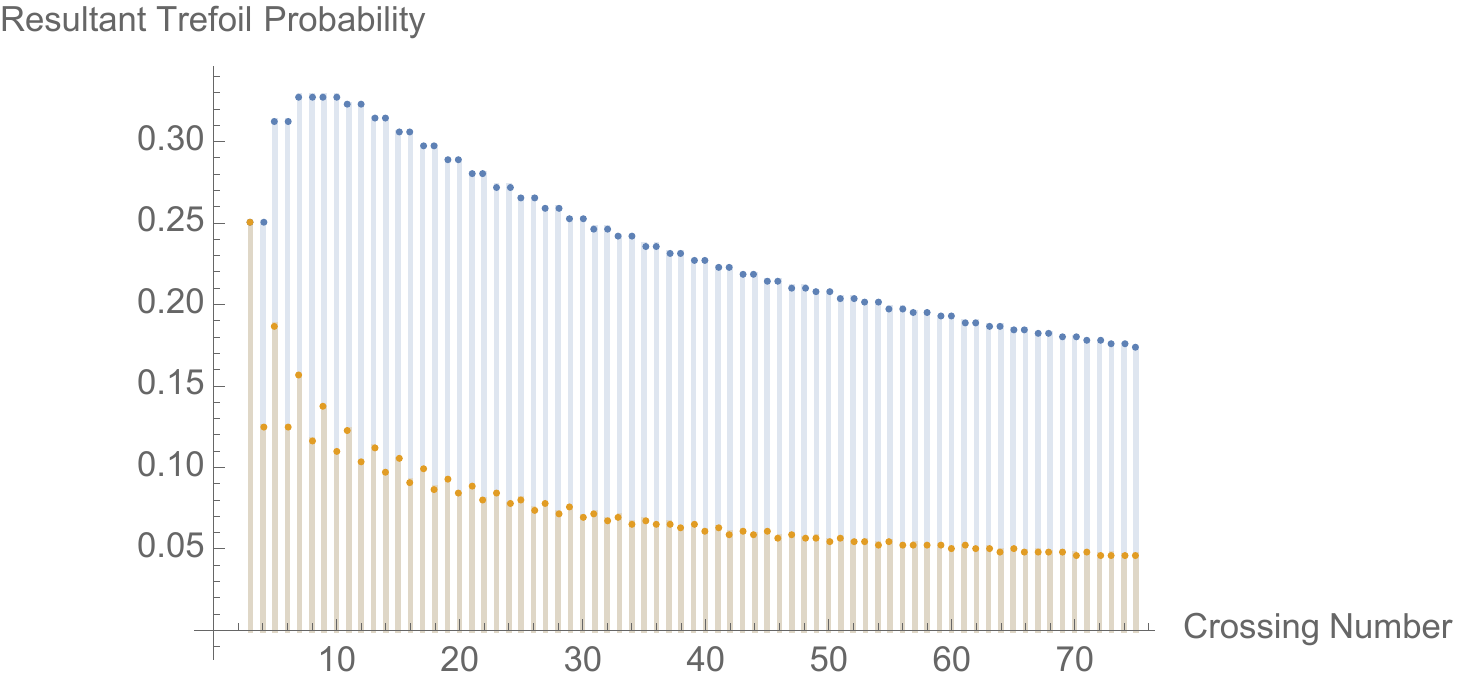}

We can generalize the 2 $n$ knots to create:

\begin{definition}
The $k$ $n$ knots are the generalization of the 2 $n$ knots, and include knots like $7_3$ and $8_3$. As the free closure of the $k$ $n$ tangle, one of $k$ or $n$ must be even. 
\end{definition}

\begin{center}
\begin{tikzpicture}
\begin{scope}
    \draw (.25,0) -- (0,0);
    \draw (.25,.5) -- (0,.5);
    \draw (.25, 0) -- (.75, .5);
    \draw (.25,.5) -- (.75,0);
    \draw (.75,0)--(1,0);
    \draw (.75, .5) -- (1,.5);
    \node[draw=none] (ellipsis1) at (1.425,0.25) {$\cdots$};
    \draw (1.75,0) -- (2,0);
    \draw (1.75,.5) -- (2,.5);
    \draw (2, 0) -- (2.5, .5);
    \draw (2,.5) -- (2.5,0);
    \draw (2.5,0)--(2.75,0);
    \draw (2.5, .5) -- (2.75,.5);
\end{scope}
\begin{scope}[xshift=1.625cm, yshift=1 cm, rotate=90]
    \draw (.25,0) -- (0,0);
    \draw (.25,.5) -- (0,.5);
    \draw (.25, 0) -- (.75, .5);
    \draw (.25,.5) -- (.75,0);
    \draw (.75,0)--(1,0);
    \draw (.75, .5) -- (1,.5);
    \node[draw=none, rotate=90] (ellipsis1) at (1.425,0.25) {$\cdots$};
    \draw (1.75,0) -- (2,0);
    \draw (1.75,.5) -- (2,.5);
    \draw (2, 0) -- (2.5, .5);
    \draw (2,.5) -- (2.5,0);
    \draw (2.5,0)--(2.75,0);
    \draw (2.5, .5) -- (2.75,.5);
\end{scope}
\draw [decorate,decoration={brace,amplitude=10pt},xshift=0pt,yshift=-4pt]  (2.75,0)--(0,0) node [black,midway,yshift=-0.6cm]{$n$};
\draw [decorate,decoration={brace,amplitude=10pt},xshift=0pt,yshift=-4pt]  (1.75,3.75)--(1.75,1.25) node [black,midway,xshift=0.6cm]{$k$};
\draw (0,0) .. controls (-1, 0.3) .. (-.5,2);
\draw (-.5,2) .. controls (0.8, 4.75) .. (1.125, 3.75);
\draw (2.75,0) .. controls (3.75, .3) .. (3.25,2);
\draw (3.25,2) .. controls (1.95,4.75) .. (1.625,3.75);
\draw (0,.5) .. controls (-.4, 0.7) .. (.6, .8);
\draw (.6,.8) .. controls (1.1, 0.9) .. (1.125,1);
\draw (2.75, .5) .. controls (3.15, .7) .. (2.15, .8);
\draw (2.15, .8) .. controls (1.65, .9) .. (1.625,1);
\end{tikzpicture}
\end{center}

Constructing these knots by connecting the ends of a tangle of $k$ crossings by some number $\frac{n}{2}$ Reidemeister II moves makes this even and odd requirement clear. However, in the tangle context, having both $n$ and $k$ being odd would lead to links as we could eliminate all but one crossing from a tangle of our choosing, which would then attach itself to the end of the other tangle, thus creating a foil with even length, or a link. Still, like the 2 $n$ knots, there exists $k$ $n$ knots for all number of crossings. 

\begin{theorem}
The resultant knot probability of a free $k$ $n$ knot is the same as the resultant knot probability for a free $n$ $k$ knot.
\end{theorem}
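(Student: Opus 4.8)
The plan is to realize the free $n$ $k$ diagram as a rigid-motion image of the free $k$ $n$ diagram, and then to invoke two invariances of resultant knot probability: invariance under planar isotopy, which holds by the very definition of a free knot diagram, and invariance under reflection, which holds because we identify every knot with its mirror image. First I would rotate the entire free $k$ $n$ diagram by $90\degree$ in the plane. Since this rotation is achieved by the continuous family of rotations through angles $0$ to $90\degree$, each preserving vertices and edges, it is a planar isotopy; hence the rotated diagram is literally the \emph{same} free knot diagram as the original, and in particular has the identical resultant knot probability.

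Next I would identify the rotated diagram. Regarded as a planar $4$-valent graph, the $k$ $n$ shape consists of two twist regions --- a horizontal chain of $n$ crossings and a vertical chain of $k$ crossings, with consecutive crossings of each chain joined by a pair of parallel edges --- whose eight loose ends are spliced together by the fixed closure arcs of the construction. A $90\degree$ rotation carries the horizontal $n$-chain to a vertical chain of $n$ crossings and the vertical $k$-chain to a horizontal chain of $k$ crossings. Because the closure recipe for the $k$ $n$ and the $n$ $k$ diagrams is one and the same recipe with the roles of the two chains interchanged, the rotation should carry the closure arcs of the $k$ $n$ diagram onto those of the $n$ $k$ diagram, so that the rotated diagram is the $n$ $k$ diagram, possibly precomposed with a reflection.

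Finally I would dispose of the possible reflection. If $S'$ denotes the planar reflection of a shape $S$, then reflecting each crossing assignment gives a bijection between the assignments of $S$ and those of $S'$; an in-plane reflection leaves the over/under data untouched while reversing the planar handedness of every crossing, so the resultant of a reflected assignment is exactly the mirror image of the original resultant. As resultant knot probability counts a knot together with its mirror image, $S$ and $S'$ have identical resultant knot probability. Chaining the planar-isotopy invariance with this reflection invariance yields
$$P(k\,n \to R) = P(n\,k \to R)$$
for every resultant knot $R$, which is the assertion.

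The step I expect to be the main obstacle is the middle one: verifying rigorously that the $90\degree$ rotation matches up the four connecting arcs of the two diagrams, rather than producing a ``denominator''-type closure or some other $4$-valent graph. The cleanest way to organize this bookkeeping is to track how the closure arcs attach to the two ends of each twist chain under the rotation. It is reassuring that whatever chirality twist appears is harmless: on the level of honest knots the standard closures of the $k$ $n$ and $n$ $k$ tangles already differ by a mirror image, since their fractions $\frac{nk+1}{k}$ and $\frac{nk+1}{n}$ satisfy $k\,n \equiv -1 \pmod{nk+1}$, which is precisely the mirror-image case complementary to the equivalence criterion of Lemma \ref{KnotFracEquiv}. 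Once the arc bookkeeping is settled, the rest of the argument is immediate.
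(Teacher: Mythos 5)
Your middle step --- the one you flagged as the main obstacle --- is exactly where the argument breaks, and it cannot be repaired by bookkeeping alone. Rotating the free $k$ $n$ diagram by $90\degree$ does produce a diagram with a horizontal $k$-chain and a vertical $n$-chain, but it is \emph{not} planar-isotopic to the $n$ $k$ diagram, even after a reflection. One way to see this concretely: in the $k$ $n$ diagram the unbounded face is bounded by the two long closure arcs, the underside of the horizontal $n$-chain, and the top crossing of the vertical $k$-chain, so it is incident to $n+1$ crossings; in the $n$ $k$ diagram the unbounded face is incident to $k+1$ crossings. Planar isotopies and in-plane reflections are homeomorphisms of the plane, hence preserve the unbounded face and its crossing-incidence count, so for $n \neq k$ no composition of planar isotopy and reflection can carry one diagram to the other. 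What the rotation actually produces is the $n$ $k$ diagram with a different choice of unbounded region --- the two are isotopic on the sphere $S^2$ but not in the plane.

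The missing ingredient is therefore an invariance of resultant knot probability under \emph{spherical} isotopy, i.e., under passing a strand around the point at infinity so as to change which face is outermost. This is precisely how the paper's proof proceeds: it first brings the far right strand of the $k$ $n$ diagram around the rest of the diagram via a spherical isotopy, and only then performs rotations to identify the result as the $n$ $k$ diagram. The spherical invariance is easy to justify --- assignments of the two diagrams biject crossing-by-crossing, and the resulting knot diagrams differ by pushing an arc through infinity, which does not change the knot type --- but it is a genuinely additional principle that your proof never states, and your two stated invariances (planar isotopy and mirror reflection) are provably insufficient by the face count above. Your final paragraph's chirality observation via the fractions $\frac{nk+1}{k}$ and $\frac{nk+1}{n}$ is correct but orthogonal to this issue; with the spherical move added (and your mirror-counting argument retained, which is fine and matches the paper's convention of not distinguishing mirror images), the proof goes through.
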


\begin{proof}
Begin with a free $k$ $n$ knot as depicted above.  Via a spherical isotopy, bring the far right strand around the rest of the free knot diagram so it is now to the left of everything else.  Next isotope the resulting diagram so the $k$ tangle is made horizontal by a 90$^\circ$ clockwise rotation, and the $n$-tangle is made vertical by a 270$^\circ$ clockwise rotation.  The result is the  $n$ $k$ free knot diagram. 
\end{proof}

\begin{theorem}
The number of unknots produced by a  $k$ $n$ free knot diagram, with $k$, $n$ both even is $2^k {n \choose \frac{n}{2}} + 2^n {k \choose \frac{k}{2}} - {n \choose \frac{n}{2}} {k \choose \frac{k}{2}}$.
\end{theorem}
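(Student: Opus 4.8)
The plan is to pin down exactly which of the $2^{k+n}$ assignments yield the unknot and then count them by inclusion--exclusion. Call an assignment of a twist region \emph{balanced} if it uses equally many positive- and negative-slope crossings. Since $k$ and $n$ are even, let $A$ be the set of assignments whose $k$-region is balanced and $B$ the set whose $n$-region is balanced; balancing the $k$-region amounts to choosing which $k/2$ of its crossings are positive, so it occurs in $\binom{k}{k/2}$ ways, and likewise the $n$-region balances in $\binom{n}{n/2}$ ways. The heart of the argument is the claim that an assignment produces the unknot if and only if it lies in $A\cup B$.

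For sufficiency I would argue exactly as in the even case of the free $2$ $n$ theorem. If an assignment lies in $A$, the $k$ crossings cancel in pairs under Reidemeister II, leaving that region's two strands parallel and uncrossed; this closes the $n$-twist region upon itself, so that each of its crossings is a Reidemeister~I kink once over/under data is present. All remaining crossings then disappear and the diagram is the unknot, \emph{regardless} of how the $n$-region was assigned; this is the source of the free factor $2^n$. The case of $B$ is handled identically after invoking the preceding symmetry theorem, which matches assignments of the $k$ $n$ knot with those of the $n$ $k$ knot and carries the event $B$ to the corresponding $A$-event, contributing the factor $2^k$.

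For necessity, suppose an assignment lies in neither $A$ nor $B$. After the Reidemeister~II cancellations each region collapses to a single crossing type, with net signed counts $a=n-2r$ and $b=k-2s$, where $|a|\ge 2$ and $|b|\ge 2$ because $r\neq n/2$ and $s\neq k/2$ (both quantities are even and nonzero). The reduced diagram is then the ordinary two-bridge diagram of the $b$ $a$ knot, whose continued fraction $a+\tfrac1b=\tfrac{ab+1}{b}$ is already in lowest terms since $\gcd(ab+1,b)=1$. Because $a$ and $b$ are both even, $ab\equiv 0\pmod 4$, so $|ab+1|\ge 3$ and $ab+1$ is odd; hence the object is a genuine knot (not a link) with numerator $\neq\pm1$, which by Lemma \ref{KnotFracEquiv} is not the unknot. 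I expect this to be the main obstacle: one must track the signs of the surviving crossings, confirm the fraction is reduced, and separately rule out the would-be exceptional value $ab=-2$ (impossible here by the parity observation) so that triviality is excluded cleanly.

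Finally, inclusion--exclusion gives $|A\cup B|=|A|+|B|-|A\cap B|$. The counts above yield $|A|=2^{n}\binom{k}{k/2}$, $|B|=2^{k}\binom{n}{n/2}$, and $|A\cap B|=\binom{n}{n/2}\binom{k}{k/2}$ (both regions balanced, chosen independently), giving the claimed total $2^{k}\binom{n}{n/2}+2^{n}\binom{k}{k/2}-\binom{n}{n/2}\binom{k}{k/2}$. As a consistency check, setting $k=2$ collapses this to $2^{n+1}+2\binom{n}{n/2}$, recovering the even case of the free $2$ $n$ theorem.
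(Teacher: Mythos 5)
Your count is correct, and the inclusion--exclusion skeleton is exactly the paper's: the paper likewise counts the assignments with a balanced (unravelable) $k$-region ($2^n\binom{k}{k/2}$ of them, each trivializing the closure so that the $n$ crossings become Reidemeister~I kinks), those with a balanced $n$-region ($2^k\binom{n}{n/2}$, via the $k$ $n$ versus $n$ $k$ symmetry), and subtracts the doubly balanced overlap $\binom{n}{n/2}\binom{k}{k/2}$. Where you genuinely go beyond the paper is the necessity direction. The paper's proof only \emph{exhibits} these assignments as unknots (``we can reach the unknot by first unraveling\dots'') and never argues that no assignment outside your set $A\cup B$ is trivial; as written, it establishes the formula only as a lower bound. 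Your reduction of an unbalanced assignment to the two-bridge knot with fraction $\frac{ab+1}{b}$, with $a,b$ even and $|a|,|b|\ge 2$, combined with Lemma \ref{KnotFracEquiv} and the parity observation ruling out $ab=-2$ (so $|ab+1|\ge 3$ and odd, giving a genuine nontrivial knot rather than a link or the unknot, which has numerator $\pm 1$), supplies exactly the missing half, and the fraction is automatically reduced since $\gcd(ab+1,b)=1$. The one point to state explicitly is that after the pairwise Reidemeister~II cancellations the diagram really is the standard $b$ $a$ two-bridge closure with the claimed signed twist counts --- immediate here because each cancellation happens within a single integer twist region --- but once that is said, your argument is complete and strictly stronger than the printed one. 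The $k=2$ consistency check recovering $2^{n+1}+2\binom{n}{n/2}$ is a nice confirmation against the even case of the free $2$ $n$ theorem.
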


\begin{proof}
Again, like with the 2 $n$ knots, we can reach the unknot by first unraveling and separating one of the component tangles, so that the remaining knot diagram consists of Reidemeister I twists of the unknot. To unravel a tangle, we choose half of its crossings to have positive slope type, and the other half to have negative slope type, and then remove crossings in pairs.   There are ${k \choose \frac{k}{2}}$ ways to make the top $k$ crossings unravelable, and each of these generates an unknot for all $2^n$ assignments of the bottom $n$ crossings, for a total of $2^n{k \choose \frac{k}{2}}$ unknots.

The same process, in the opposite order, leads to another $2^k {n \choose \frac{n}{2}}$ unknots.  However, some unknots are counted twice:  those that are unravelable in both the $k$ tangle and the $n$ tangle.  Thus there are $2^k {n \choose \frac{n}{2}} + 2^n {k \choose \frac{k}{2}} - {n \choose \frac{n}{2}} {k \choose \frac{k}{2}}$ unknots resulting from assigning crossings to the $k$ $n$ free knot diagrams. 
\end{proof}

\begin{theorem}
The number of unknots produced by a $k$ $n$ free knot diagram,  with $k$ odd and $n$ even, is $2^k {n \choose \frac{n}{2}} + 2{k \choose \frac{k-1}{2}} {n \choose \frac{n-2}{2}} $.
\end{theorem}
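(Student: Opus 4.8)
The plan is to follow the same philosophy as the even--even theorem, but to track each twist region by its \emph{net} number of signed twists rather than only by whether it unravels completely. First I would note that for any assignment of the $k$~$n$ free diagram the $k$ crossings form one twist region and the $n$ crossings another; by repeatedly cancelling adjacent opposite crossings with Reidemeister~II, each region reduces to a block of same-sign twists determined only by the signed sum of its crossings. Write $a$ for the net twist of the $k$ region and $b$ for the net twist of the $n$ region, so that $a\equiv k\equiv 1$ and $b\equiv n\equiv 0\pmod 2$; the number of assignments of the $k$ region with net twist $a$ is $\binom{k}{(k-a)/2}$, and similarly $\binom{n}{(n-b)/2}$ for the $n$ region. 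After this reduction the diagram is exactly the closure of the rational tangle $a\,b$, whose continued fraction is $b+\tfrac1a=\tfrac{ab+1}{a}$, so the resulting knot is $K\!\left(\tfrac{ab+1}{a}\right)$.

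Next I would invoke Lemma~\ref{KnotFracEquiv}: since the unknot is $K(1/q)$, with numerator $1$, the knot $K\!\left(\tfrac{ab+1}{a}\right)$ is the unknot precisely when $|ab+1|=1$, i.e. when $ab\in\{0,-2\}$. The parity constraints ($a$ odd, $b$ even) split this into two disjoint families. When $ab=0$, the odd value $a$ cannot be $0$, so we need $b=0$ with $a$ arbitrary; this contributes $2^{k}\binom{n}{n/2}$ assignments, namely every assignment of the $k$ region together with the $\binom{n}{n/2}$ balanced assignments of the $n$ region. When $ab=-2$, the requirement that $a$ be odd and $b$ even forces $(a,b)\in\{(1,-2),(-1,2)\}$, the solutions $(\pm2,\mp1)$ being excluded by parity; each sign choice contributes $\binom{k}{(k-1)/2}\binom{n}{(n-2)/2}$ assignments, using $\binom{k}{(k+1)/2}=\binom{k}{(k-1)/2}$ and $\binom{n}{(n+2)/2}=\binom{n}{(n-2)/2}$, for a total of $2\binom{k}{(k-1)/2}\binom{n}{(n-2)/2}$.

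Since the two families are distinguished by $b=0$ versus $b=\pm2$ they are disjoint, so adding them gives $2^{k}\binom{n}{n/2}+2\binom{k}{(k-1)/2}\binom{n}{(n-2)/2}$, as claimed. I expect the main obstacle to be the second family: unlike the even--even case (where $ab=-2$ is impossible because $a$ and $b$ are both even, so inclusion--exclusion of two complete unravelings suffices), here the odd region can only be reduced to a single crossing, and there is a genuinely new source of unknots in which the $k$ region becomes a lone $\pm1$ twist and the $n$ region a net $\mp2$ twist that together cancel. The care needed is in justifying that this configuration really is the unknot --- cleanest via the determinant computation $|ab+1|=1$ coming from Lemma~\ref{KnotFracEquiv} --- and in checking that the parities of $k$ and $n$ exclude exactly the spurious solutions $a=0$ and $(a,b)=(\pm2,\mp1)$, so that no assignment is double-counted or omitted.
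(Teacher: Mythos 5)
Your proposal is correct, and it arrives at the same two disjoint families the paper counts --- the $b=0$ family giving $2^k\binom{n}{n/2}$ unknots, and the $(a,b)=(\pm1,\mp2)$ family giving $2\binom{k}{(k-1)/2}\binom{n}{(n-2)/2}$ --- but it justifies the second family by a genuinely different mechanism. The paper argues diagrammatically: it unravels the $n$ tangle for the first count, then observes directly that when the odd $k$ tangle simplifies to a $\pm1$ tangle, an adjacent $\mp2$ tangle cancels it to produce the unknot, and it simply asserts (without invoking any classification) that no other configurations arise. You instead reduce every assignment to its net-twist pair $(a,b)$, compute the continued fraction $(ab+1)/a$, and invoke Lemma~\ref{KnotFracEquiv} to characterize the unknot resultants as exactly those with $|ab+1|=1$, then solve $ab\in\{0,-2\}$ under the parity constraints $a$ odd, $b$ even. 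What your route buys is completeness: the paper's proof shows the stated assignments are unknots but leaves implicit that nothing else is, whereas your fraction criterion rules out every other net-twist combination in one stroke (and makes transparent why the even--even case needs inclusion--exclusion while this case does not, since there $ab=-2$ is impossible but both regions can satisfy $b=0$ or $a=0$ simultaneously). The only point deserving a little more care is your application of the lemma at the degenerate value $p=1$: as quoted, the lemma compares fractions with $p=p'$, so to conclude from $ab+1=-1$ you should either normalize signs or note that $K(-1/a)$ is the mirror of $K(1/a)$ and the unknot is amphichiral --- a minor patch, and your absolute-value criterion $|ab+1|=1$ is the right statement.
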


\begin{proof}
As above, unravelling the $n$ tangle will yield $2^k {n \choose \frac{n}{2}}$ unknots.  

Since $k$ is odd, the $k$ tangle cannot be unravelled.  
However, there are 2${k \choose \frac{k-1}{2}}$ assignments which make the $k$ tangle simplify to a $\pm1$ tangle, resulting in ${k \choose \frac{k-1}{2}}$ free $n$ tangles added to a +1 tangle and ${k \choose \frac{k-1}{2}}$ free $n$ tangles added to a -1 tangle. 
If we then consider how we may get an unknot without unravelling the $n$-tangle (as we already considered this possibility above), we see that we may get unknots from a $+1$ tangle plus a $-2$ tangle, or a $-1$ tangle plus a $+2$ tangle.  
This gives us ${n \choose \frac{n-2}{2}}$ unknots in each case.
\end{proof}

\section{The $2$ $1$ $n$ knots}\label{21n}

\begin{definition}
The 2 1 $n$ knots, are again the closure of the 2 1 $n$ tangle when $n$ is odd. Examples include the figure eight, $6_2$, and $8_2$.
\end{definition}

\begin{center}
\begin{tikzpicture}
\draw (.525, 1.85) -- (1.025, 2.35);
\draw (.625, 2.25) -- (1.025, 1.85);

\draw (1.725, 1.85) -- (2.125, 2.25);
\draw (2.225, 1.85) -- (1.725, 2.35);

\draw (1.125, 1.25) -- (1.625, 1.75);
\draw (1.625, 1.25) -- (1.125,1.75);

\draw (1.625, 1.75) -- (1.725, 1.85);
\draw (1.125, 1.75) -- (1.025, 1.85);

\draw (1.725, 2.35) .. controls (1.3875, 2.65) .. (1.025, 2.35);

\draw (0.625, 2.25) .. controls (0.525, 2.35) .. (0.625, 2.45);
\draw (2.125, 2.25) .. controls (2.225, 2.35) .. (2.125, 2.45);
\draw (0.625, 2.45) .. controls (1.375, 3.1) .. (2.125, 2.45);

\draw (0,0) .. controls (-1, 0.3) .. (.525,1.85);
\draw (2.75,0) .. controls (3.75, .3) .. (2.225, 1.85);

\draw (1.125, 1) -- (1.125,1.25);
\draw (1.625, 1) -- (1.625, 1.25);
\draw (0,.5) .. controls (-.4, 0.7) .. (.6, .8);
\draw (.6,.8) .. controls (1.1, 0.9) .. (1.125,1);
\draw (2.75, .5) .. controls (3.15, .7) .. (2.15, .8);
\draw (2.15, .8) .. controls (1.65, .9) .. (1.625,1);

\draw[dashed] (1.375, 2) circle (1 cm); 

\draw (.25,0) -- (0,0);
\draw (.25,.5) -- (0,.5);
\draw (.25, 0) -- (.75, .5);
\draw (.25,.5) -- (.75,0);
\draw (.75,0)--(1,0);
\draw (.75, .5) -- (1,.5);

\node[draw=none] (ellipsis1) at (1.425,0.25) {$\cdots$};

\draw (1.75,0) -- (2,0);
\draw (1.75,.5) -- (2,.5);
\draw (2, 0) -- (2.5, .5);
\draw (2,.5) -- (2.5,0);
\draw (2.5,0)--(2.75,0);
\draw (2.5, .5) -- (2.75,.5);
\draw [decorate,decoration={brace,amplitude=10pt},xshift=0pt,yshift=-4pt]  (2.75,0)--(0,0) node [black,midway,yshift=-0.6cm]{$n$};\end{tikzpicture}
\end{center}

\begin{theorem}\label{21nunknot}
A free $2$ $1$ $n$ knot produces $12{n \choose \frac{n-1}{2}} + 2{n \choose \frac{n-3}{2}}$ unknots.
\end{theorem}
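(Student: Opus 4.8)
The plan is to follow the rational-tangle bookkeeping that Lemma \ref{KnotFracEquiv} makes available, using the fact that every assignment of the free crossings of $2\,1\,n$ yields a rational tangle, so its closure is some $K(p/q)$, and the unknot is exactly the case $p=\pm 1$ (by the Lemma, $K(1/q)$ is always $K(1/1)$, while $|p|\ge 2$ is never the unknot). First I would resolve the bottom $n$-twist region: choosing $p$ of its crossings to have positive slope and the rest negative, Reidemeister II collapses the region to an integer tangle of value $m=2p-n$; since $n$ is odd, $m$ ranges over the odd integers in $[-n,n]$, with exactly $\binom{n}{(n+m)/2}$ assignments producing a given $m$. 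In particular $m=\pm 1$ each arise $\binom{n}{(n-1)/2}$ times and $m=\pm 3$ each arise $\binom{n}{(n-3)/2}$ times, and larger $|m|$ will contribute nothing.

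Next I would resolve the top $2\,1$ region: its two-crossing part $\beta$ becomes $+2$, $-2$, or (in two distinct ways) the $0$-tangle, while the single crossing $\gamma$ becomes $+1$ or $-1$, giving $2^3=8$ top assignments. For each I compute the continued fraction $F=m+\frac{1}{\gamma+\frac{1}{\beta}}$ of the whole tangle and read off its reduced numerator. The cases $\beta=\pm 2$ give $(3m\pm 2)/3$ or $m\mp 2$; one checks $\gcd(3m\pm 2,3)=1$ so these are already reduced, and $|\text{numerator}|=1$ forces $m\in\{-1\}$, $\{1,3\}$, $\{-1,-3\}$, $\{1\}$ in the four cases. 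The decisive case is $\beta=0$: then $F=m$ regardless of $\gamma$ (geometrically, the trivialized two-twist frees crossing $C$ to be removed by a Reidemeister I move, leaving the $m$-foil), so all \emph{four} assignments with $\beta=0$ produce the $m$-foil, which is unknotted exactly when $m=\pm 1$.

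Finally I would sum the contributions. Writing $a=\binom{n}{(n-1)/2}$ and $d=\binom{n}{(n-3)/2}$, the four $\beta=0$ assignments each contribute the $m\in\{1,-1\}$ count $2a$, for $8a$ total; the four $\beta=\pm 2$ assignments contribute $a$, $a+d$, $a+d$, $a$, for $4a+2d$. The grand total is $12a+2d=12\binom{n}{(n-1)/2}+2\binom{n}{(n-3)/2}$, as claimed. A good sanity check is $n=1$, where the formula gives $12$, matching the twelve unknots of the free figure eight ($2\,1\,1$) recorded earlier.

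The main obstacle I anticipate is precisely the $\beta=0$ case. By analogy with the $2\,n$ proof it is tempting to count only the two trivializing assignments of the two-twist, but here the extra crossing $\gamma$ is still free, so there are four such assignments rather than two; this extra factor supplies the bulk of the answer ($8a$ out of $12a$) and is exactly the discrepancy that makes $2\,1\,n$ differ from $2\,n$. The rest is routine care: verifying each continued fraction is in lowest terms before invoking Lemma \ref{KnotFracEquiv}, and confirming that every odd $m$ outside $\{\pm 1,\pm 3\}$ produces $|\text{numerator}|\ge 3$ and hence no further unknots.
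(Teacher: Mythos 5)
Your proposal is correct and takes essentially the same route as the paper: your eight top assignments with $\beta=0$, $\beta=\pm2$ with $\gamma$ of opposite sign, and $\beta=\pm2$ with $\gamma$ of matching sign are exactly the paper's red, green, and blue cases, with the identical contributions $8\binom{n}{\frac{n-1}{2}}$, $2\binom{n}{\frac{n-1}{2}}+2\binom{n}{\frac{n-3}{2}}$, and $2\binom{n}{\frac{n-1}{2}}$, summing to $12\binom{n}{\frac{n-1}{2}}+2\binom{n}{\frac{n-3}{2}}$. The only (cosmetic, and if anything slightly more careful) difference is that you run every case uniformly through continued fractions and Lemma \ref{KnotFracEquiv}, whereas the paper handles the $\beta=0$ and mixed-sign cases by direct isotopy and invokes the lemma only for the $\pm2\,\pm1\,m$ closures.
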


\begin{proof}
The $2$ $1$ structure has eight possible assignments, four of which (red) simplify to $0$ tangle when a Reidemeister II move can be applied to the upper two crossings.  Then the knot in question reduces to a foil.
Each of these produces $2{n \choose \frac{n-1}{2}}$ unknots, so the four trivial assignments give the first $8{n \choose \frac{n-1}{2}}$ unknots. 

\begin{center}
\begin{tikzpicture}
\draw (-.25,-.5) -- (.25,-1);
\draw (-.25, -1) -- (.25, -.5);
\draw (.5,-0.25) -- (1, .25);
\draw (1, -.25) -- (.5, .25);
\draw (-.5, -.25) -- (-1, .25);
\draw (-.5, .25) -- (-1, -.25);

\draw (-.25, -.5) -- (-.5, -.25);
\draw (.25, -.5) -- (.5, -0.25);
\draw (-.5, .25) .. controls (0,.75) .. (.5, .25);
\draw (-1, .45) .. controls (0, 1.4) .. (1,.45);
\draw (-1, .25) .. controls (-1.1, .35) .. (-1, .45);
\draw (1, .25) .. controls (1.1, .35) .. (1, .45);

\draw (-.25,-1) -- (-.425, -1.175);
\draw (.25, -1) -- (.425, -1.175);

\draw (-1,-.25) -- (-1.175, -.425);
\draw (1,-.25) -- (1.175, -.425);

\draw[semithick, dashed] (0, 0) circle (1.25 cm);

\foreach \x in {25, 67.5, 112.5, 155, -25, -67.5, -112.5, -155} {
\draw[->, thick] (canvas polar cs:angle=\x,radius=1.5cm) -- (canvas polar cs:angle=\x,radius=2.5cm);
\begin{scope} [shift = (\x:4cm)]
\draw (-.25, -.5) -- (-.5, -.25);
\draw (.25, -.5) -- (.5, -0.25);
\draw (-.5, .25) .. controls (0,.75) .. (.5, .25);
\draw (-1, .45) .. controls (0, 1.4) .. (1,.45);
\draw (-1, .25) .. controls (-1.1, .35) .. (-1, .45);
\draw (1, .25) .. controls (1.1, .35) .. (1, .45);
\draw (-.25,-1) -- (-.425, -1.175);
\draw (.25, -1) -- (.425, -1.175);

\draw (-1,-.25) -- (-1.175, -.425);
\draw (1,-.25) -- (1.175, -.425);
\end{scope}
}

\foreach \y in {25, 67.5, 112.5, 155} {
\draw[semithick, dashed, red] (\y:4cm) circle (1.25cm);
}
\foreach \y in {-25, -67.5} {
\draw[semithick, dashed, blue] (\y:4cm) circle (1.25cm);
}
\foreach \y in {-112.5, -155} {
\draw[semithick, dashed, green] (\y:4cm) circle (1.25cm);
}

\foreach \a in {25, 67.5, -112.5, -25} {
\begin{scope}[shift = (\a: 4cm)]
\draw (.5,-0.25) -- (.65, -.1);
\draw (1, -.25) -- (.5, .25);
\draw (1, .25) -- (.85, .1);
\end{scope}
}
\foreach \b in {112.5, 155, -155, -67.5} {
\begin{scope}[shift = (\b: 4cm)]
\draw (.65, .1) -- (.5, .25);
\draw (.5,-0.25) -- (1, .25);
\draw (1, -.25) -- (.85, -.1);
\end{scope}
}
\foreach \c in {112.5, 155, -112.5, -25} {
\begin{scope}[shift = (\c: 4cm)]
\draw (-.65, .1) -- (-.5, .25);
\draw (-.5,-0.25) -- (-1, .25);
\draw (-1, -.25) -- (-.85, -.1);
\end{scope}
}
\foreach \d in {25, 67.5, -155, -67.5} {
\begin{scope}[shift = (\d: 4cm)]
\draw (-.5,-0.25) -- (-.65, -.1);
\draw (-1, -.25) -- (-.5, .25);
\draw (-1, .25) -- (-.85, .1);
\end{scope}
}
\foreach \e in {25, 155, -155, -25} {
\begin{scope}[shift = (\e: 4cm)]
\draw (-.25, .-.5) -- (-.1, -.65);
\draw (-.25,-1) -- (.25, -.5);
\draw (.25, -1) -- (.1, -.85);
\end{scope}
}
\foreach \f in {67.5, 112.5, -112.5, -67.5} {
\begin{scope}[shift = (\f: 4cm)]
\draw (.25, .-.5) -- (.1, -.65);
\draw (-.25,-.5) -- (.25,-1);
\draw (-.25, -1) -- (-.1, -.85);
\end{scope}
}
\end{tikzpicture}
\end{center}

Four assignments remain.  Of these, two (green) can have the $2$ $-1$ tangle simplified to $-2$ (or, vice versa, $-2$ $1$ simplified to $2$).  

\begin{center}
\begin{tikzpicture}
\begin{scope}
\draw (.25, .-.5) -- (.1, -.65);
\draw (-.25,-.5) -- (.25,-1);
\draw (-.25, -1) -- (-.1, -.85);
\draw (.5,-0.25) -- (.65, -.1);
\draw (1, -.25) -- (.5, .25);
\draw (1, .25) -- (.85, .1);
\draw (-.65, .1) -- (-.5, .25);
\draw (-.5,-0.25) -- (-1, .25);
\draw (-1, -.25) -- (-.85, -.1);

\draw (-.25, -.5) -- (-.5, -.25);
\draw (.25, -.5) -- (.5, -0.25);
\draw (-.5, .25) .. controls (0,.75) .. (.5, .25);
\draw (-1, .45) .. controls (0, 1.4) .. (1,.45);
\draw (-1, .25) .. controls (-1.1, .35) .. (-1, .45);
\draw (1, .25) .. controls (1.1, .35) .. (1, .45);

\draw (-.25,-1) -- (-.425, -1.175);
\draw (.25, -1) -- (.425, -1.175);

\draw (-1,-.25) -- (-1.175, -.425);
\draw (1,-.25) -- (1.175, -.425);

\draw[semithick, dashed] (0, 0) circle (1.25 cm);
\end{scope}

\draw [thick, <->] (1.5,0) -- (2.5,0);

\begin{scope}[xshift = 4cm]
\draw (-.25, -.5) .. controls (0, -.75) .. (-.425, -1.175);
\draw (.25, -.5) .. controls (0, -.75) .. (.425, -1.175);
\draw (.65, .1) -- (.5, .25);
\draw (.5,-0.25) -- (1, .25);
\draw (1, -.25) -- (.85, -.1);
\draw (-.5,-0.25) -- (-.65, -.1);
\draw (-1, -.25) -- (-.5, .25);
\draw (-1, .25) -- (-.85, .1);

\draw (-.25, -.5) -- (-.5, -.25);
\draw (.25, -.5) -- (.5, -0.25);
\draw (-.5, .25) .. controls (0,.75) .. (.5, .25);
\draw (-1, .45) .. controls (0, 1.4) .. (1,.45);
\draw (-1, .25) .. controls (-1.1, .35) .. (-1, .45);
\draw (1, .25) .. controls (1.1, .35) .. (1, .45);

\draw (-1,-.25) -- (-1.175, -.425);
\draw (1,-.25) -- (1.175, -.425);

\draw[semithick, dashed] (0, 0) circle (1.25 cm);
\end{scope}

\end{tikzpicture}
\end{center}
The result is now a free $n$ tangle added to a $2$ (or $-2$) tangle.  If the crossings of the free $n$ tangle are assigned so that, after pairwise removal of cancelling crossings, a $-1$ or $-3$ (or $1$ or $3$) tangle remains, then the knot diagram represents the unknot.  Thus we've created $2{n \choose \frac{n-1}{2}}$ + $2{n \choose \frac{n-3}{2}}$ unknots.

The two remaining assignments of the free $2$ $1$ tangle (blue) are immutable without connecting them to the $n$ tangle. 
In assigning the crossings of the free $n$ tangle, ${n \choose \frac{n-1}{2}}$ of the assignments simplify to a $1$ tangle, and ${n \choose \frac{n-1}{2}}$ simplify to a $-1$ tangle.  When the result is $2$ $1$ $-1$ or $-2$ $-1$ $1$, it is easily seen to be the unknot.  This contributes the remaining $2 {n \choose \frac{n-1}{2}}$ unknots to the sum.  
\end{proof}

\begin{theorem}
A free $2$ $1$ $n$ knot produces $8{n \choose \frac{n-3}{2}}+2{n \choose \frac{n-1}{2}}+2{n \choose \frac{n-5}{2}}$ trefoils, and, in general, $8{n \choose \frac{n-k}{2}}+2{n \choose \frac{n-k+2}{2}}+2{n \choose \frac{n-k-2}{2}}$ k-foils, k $\leq$ n.
\end{theorem}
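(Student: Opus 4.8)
The plan is to reuse the case analysis of the eight assignments of the top $2\,1$ cap that was set up in the proof of Theorem \ref{21nunknot}, tracking which $k$-foils each case contributes. Recall that those eight assignments split into four ``red'' assignments, in which the two top crossings cancel by Reidemeister II (and the leftover length-one loop straightens by Reidemeister I), two ``green'' assignments, in which the $2\,1$ cap simplifies to a single $\pm 2$ tangle that is then \emph{added} collinearly to the free $n$ tangle, and two ``blue'' assignments, which cannot be simplified until the $n$ tangle is assigned. Throughout I take $k \geq 3$, so that only nontrivial foils are in play (the trefoil being $k=3$); the unknot case is already handled by Theorem \ref{21nunknot}. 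For the free $n$ tangle I use the standard residual count from the proof of Theorem \ref{Foils}: assigning $\ell$ of its crossings positive slope and the rest negative leaves, after pairwise Reidemeister II cancellation, a signed horizontal $(n-2\ell)$ twist region, and a fixed signed residual $j \equiv n \pmod 2$ arises in exactly $\binom{n}{\frac{n-j}{2}}$ assignments.

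First I would dispatch the red case: each of the four red assignments reduces the whole diagram to the free $n$-foil, so by Theorem \ref{Foils} each produces $\binom{n}{\frac{n-k}{2}}$ left and $\binom{n}{\frac{n-k}{2}}$ right $k$-foils, giving $8\binom{n}{\frac{n-k}{2}}$ in total and hence the first term. The green case is the heart of the argument. Once the $2\,1$ cap collapses to a fixed $\pm 2$ tangle that is added to the free $n$ twist region, the closure is again a single horizontal twist region, i.e.\ a foil, whose signed crossing number is the free residual $j$ shifted by $\pm 2$. For the $+2$ assignment the closure is the $(j+2)$-foil, a $k$-foil exactly when $j = k-2$ or $j = -k-2$, contributing $\binom{n}{\frac{n-k+2}{2}} + \binom{n}{\frac{n-k-2}{2}}$; the $-2$ assignment gives $(j-2)$-foils and contributes the same by the mirror computation. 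Summing the two green assignments yields $2\binom{n}{\frac{n-k+2}{2}} + 2\binom{n}{\frac{n-k-2}{2}}$, the remaining two terms. As a consistency check, specializing to $k=1$ reproduces exactly the green unknot tally $2\binom{n}{\frac{n-1}{2}} + 2\binom{n}{\frac{n-3}{2}}$ recorded in Theorem \ref{21nunknot}.

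Finally I would show the blue case contributes no $k$-foils for $k \geq 3$. After the $n$ tangle reduces to residual $j$, a blue assignment yields the closure of the rational tangle $2\,1\,j$, whose continued fraction has the form $\tfrac{3j+2}{3}$. By Lemma \ref{KnotFracEquiv}, this knot can equal the foil $K(k/1)$ only if $|3j+2| = k$ together with $3 \equiv 1 \pmod{k}$, forcing $k \mid 2$; thus no $k$-foil with $k \geq 3$ occurs. (These assignments instead produce the genuine $2\,1\,j$ knots, together with the extra unknots counted separately in Theorem \ref{21nunknot}.) Adding the three contributions gives $8\binom{n}{\frac{n-k}{2}} + 2\binom{n}{\frac{n-k+2}{2}} + 2\binom{n}{\frac{n-k-2}{2}}$, as claimed.

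I expect the main obstacle to be the green case. One must verify that the simplified $\pm 2$ tangle genuinely combines collinearly with the free $n$ region to form one long twist region (whose closure is a foil), rather than a $2\,n$-type knot, which would only ever yield trefoils and unknots among foils; this is precisely why the middle and last terms acquire the shifted indices $\frac{n-k\pm2}{2}$. One must also keep the signs and handedness straight so that these indices land correctly rather than on their reflections. By comparison, the red case is immediate from Theorem \ref{Foils} and the blue case is routine given Lemma \ref{KnotFracEquiv}.
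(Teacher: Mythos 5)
Your proposal is correct and follows essentially the same route as the paper's proof: the same red/green/blue case analysis inherited from Theorem \ref{21nunknot}, with the red assignments contributing $8\binom{n}{\frac{n-k}{2}}$ via Theorem \ref{Foils}, the green assignments contributing the shifted counts $2\binom{n}{\frac{n-k+2}{2}}+2\binom{n}{\frac{n-k-2}{2}}$ from adding a $\pm2$ tangle to the residual twist region, and the blue assignments ruled out by the continued fraction $\frac{3m\pm2}{3}$ together with Lemma \ref{KnotFracEquiv}. Your extra bookkeeping (mirror symmetry between the two green assignments, the $k=1$ consistency check against the unknot count) just makes explicit steps the paper leaves implicit.
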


\begin{proof}
The structure of this proof is the same as above.
The first four assignments (red) of the free $2$ $1$ $n$ tangle produce only foils, so we get $4 \cdot 2{n \choose \frac{n-3}{2}}$ trefoils. 
In the green assignments, a $\pm 2$ tangle can be added to a $\mp5$ tangle or a $\pm1$ tangle to produce a trefoil:  There are $2{n \choose \frac{n-5}{2}}$ ways to make the free $n$ tangle into a $\mp5$ tangle, and $2{n \choose \frac{n-1}{2}}$ ways to make the free $n$ tangle into a $\pm1$ tangle.

If one's goal is to get a $k$ tangle, there are $4 \cdot 2{n \choose \frac{n-k}{2}}$ ways to get this from the red assignments, and $2{n \choose \frac{n-k-2}{2}} + 2{n \choose \frac{n-k+2}{2}}$ ways from the green.  

The final two assignments (blue) are the closures of the tangles $-2 \, -1 \, m$ and  $2 \, 1 \, m$, which have continued fractions 
$$\displaystyle m + \frac{1}{\pm1 \pm \frac{1}{2}} = m \pm \frac{2}{3} = \frac{3m\pm2}{3}.$$ We appeal to Lemma \ref{KnotFracEquiv}, as any foil would be K$\left(\frac{k}{1}\right)$, where $k$ is an odd integer, so $q=1$ and $q'=3$. Thus these assignments cannot produce any foils, as the difference of 3 and 1 is even, so the odd $k$ ensures $1 \not\equiv 3 \text{ mod } k$ and $3 \not\equiv 1 \text{ mod } k$. 
\end{proof}

\begin{theorem}
 A free $2$ $1$ $n$ knot produces $2{n \choose \frac{n-1}{2}}$ figure eights, and $2{n \choose \frac{n-k}{2}}$ of $2$ $1$ $k$ knots, k $\leq$ n.
\end{theorem}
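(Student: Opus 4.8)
The plan is to continue the red/green/blue case analysis of the eight assignments of the top $2\,1$ block that was introduced in the proof of Theorem~\ref{21nunknot} and the preceding foil count, and to isolate exactly which assignments yield a $2\,1\,k$ knot. The four red assignments collapse the $2\,1\,$ block to a $0$ tangle and so produce only foils, and the two green assignments reduce to a $\pm 2$ tangle adjoined to the reduced free $n$-tangle, which the earlier foil count shows again produces only foils. Every foil is the closure of a $k'/1$ tangle, whereas the $2\,1\,k$ knot has continued fraction $k+\frac{1}{1+\frac12}=\frac{3k+2}{3}$, so $p=3k+2\ge 5$ and $\gcd(3k+2,3)=1$. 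By Lemma~\ref{KnotFracEquiv} a foil $K(k'/1)$ can equal $K\!\left(\tfrac{3k+2}{3}\right)$ only if $k'=3k+2$, and then with $q=1,\ q'=3$ both $q\equiv q'$ and $qq'\equiv 1 \pmod{p}$ fail for $p\ge 5$; hence no red or green assignment gives a $2\,1\,k$ knot, and the whole count is carried by the two blue assignments.

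Next I would treat the two blue assignments, which are the closures of $2\,1\,m$ and $-2\,-1\,m$, where $m$ is the signed tangle to which the free $n$-tangle reduces after pairwise Reidemeister~II cancellation. By the counting in Theorem~\ref{Foils}, the free $n$-tangle reduces to a $+k$ tangle in $\binom{n}{(n-k)/2}$ ways and to a $-k$ tangle in $\binom{n}{(n-k)/2}$ ways, where $k\le n$ and $k\equiv n \pmod 2$ (so $k$ is odd). This yields four families: $2\,1\,k$ from $(2\,1\,m,\ m=+k)$, its mirror $-2\,-1\,(-k)$ (continued fraction $-\frac{3k+2}{3}$) from $(-2\,-1\,m,\ m=-k)$, together with $2\,1\,(-k)$ and $-2\,-1\,k$. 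The first two are the knot $2\,1\,k$ and its mirror, each occurring $\binom{n}{(n-k)/2}$ times; since we identify a knot with its mirror, these contribute the claimed $2\binom{n}{(n-k)/2}$ copies of $2\,1\,k$.

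It then remains to check that the remaining two blue families do not secretly augment this total. Here I would record that $2\,1\,(-k)$ and $-2\,-1\,k$ have continued fraction $\pm\frac{3k-2}{3}$, hence numerator $p=3k-2$, and since $3k-2=3\ell+2$ has no integer solution, Lemma~\ref{KnotFracEquiv} shows these never coincide with any $2\,1\,\ell$ knot (they are the closures of $3\,(k-1)$ tangles, which for $k=1$ degenerate to the unknot already tallied in Theorem~\ref{21nunknot}). Thus exactly $2\binom{n}{(n-k)/2}$ assignments produce the $2\,1\,k$ knot. Specializing to $k=1$ gives the figure eight $2\,1\,1=K(5/3)$, which is amphichiral, so the two mirror-paired blue families both coincide with it and the figure-eight count is $2\binom{n}{(n-1)/2}$.

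I expect the main obstacle to be the mirror-image bookkeeping in the blue case: one must correctly match each pair (blue assignment, sign of the reduced $n$-tangle) to its oriented tangle closure, and then invoke Lemma~\ref{KnotFracEquiv} to verify that exactly two of the four pairs fuse into $2\,1\,k$ and its mirror while the other two fall into a disjoint family and foils are excluded—so that the final tally is precisely $2\binom{n}{(n-k)/2}$ with no spurious duplicates or collisions.
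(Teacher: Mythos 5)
Your proposal is correct and follows essentially the same route as the paper: the red/green/blue trichotomy of the eight assignments of the $2$ $1$ block from Theorem \ref{21nunknot}, with the count $2\binom{n}{(n-k)/2}$ coming from the two blue closures $2\,1\,m$ and $-2\,-1\,m$ when the free $n$-tangle reduces to $\pm k$, and the figure eight recovered as the amphichiral $k=1$ case. Your additional continued-fraction checks via Lemma \ref{KnotFracEquiv} --- ruling out foils (numerator $p=3k+2\geq 5$ with $q=1$, $q'=3$) and the mismatched-sign blue families (numerator $3k-2\equiv 1 \bmod 3$ versus $3k+2 \equiv 2 \bmod 3$) --- make explicit an exactness point the paper leaves implicit (it merely remarks after the proof that the mismatched cases go unexamined), but this is a refinement of the same argument rather than a different one.
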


\begin{proof}
As in the proof of Theorem \ref{21nunknot}, we focus first on the resolution of the $2$ $1$ tangle.  The red and green assignments are all foils, so any figure eight knot must come from a blue assignment.  

A figure eight knot has tangle notation either $2$ $2$ or the equivalent $2$ $1$ $1$ (or their negation).  As the blue tangles are $2$ $1$ and $-2$ $-1$, we look for ways to assign the crossings of the free $n$ tangle so that it simplifies to a single crossing:  for each, there are ${n \choose \frac{n-1}{2}}$ ways to do so.  

If we counting $2$ $1$ $k$ knots instead of figure eight knots, we can assign the crossings of the free $n$ tangle in $2{n \choose \frac{n-k}{2}}$ different ways which result in a $k$ tangle. 
\end{proof}

These results describe a large majority of the resultants of the $2$ $1$ $n$ knots. For example, we have enumerated 966 of the 1024 resultants of the $2$ $1$ $7$ tangle knot diagram. Note we do not examine what happens if the alternating $2$ $1$ tangles are connected to tangles with more than 1 crossing and the overall structures are not alternating. There are an equal number of these unexamined knots as the $2$ $1$ $k$ knots where $k \geq 3$, or
$
2\sum^n_{k=3} {n \choose \frac{n-k}{2}}.
$

In future work we plan to complete this classification, and show

\begin{conjecture}
A free $2$ $1$ $n$ knot produces $2$ of the $3$ $n-1$ knots, $2n$ of the $3$ $n-3$ knots, and $2 {n \choose \frac{k}{2}}$ of the $2$ $n-k$ knots, for $3 \leq n-k \leq n-4$.
\end{conjecture}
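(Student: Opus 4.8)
The plan is to complete the analysis begun in Theorem \ref{21nunknot}, where the eight assignments of the free $2\,1$ tangle are sorted into the four \emph{red} assignments (whose top two crossings cancel, leaving a foil), the two \emph{green} assignments (which collapse a $2\,\mp 1$ tangle to a $\mp 2$ tangle, leaving a $\pm 2$ tangle summed onto the free $n$ tangle), and the two rigid \emph{blue} assignments, which realize the closures of the tangles $2\,1\,m$ and $-2\,-1\,m$. The red and green contributions are already accounted for, as foils, in the preceding theorems. The resultants that remain unclassified are exactly the \emph{non-alternating} blue resultants: those arising when the blue $2\,1$ (respectively $-2\,-1$) tangle is completed by a free $n$ tangle whose assigned value carries the opposite sign. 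So the entire task reduces to naming these closures and counting them, and I would organize the proof around exactly those two subtasks.

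The count comes first and is the easy half. By Theorem \ref{Foils}, assigning the free $n$ tangle yields a $+k$ tangle in $\binom{n}{(n-k)/2}$ ways and a $-k$ tangle in the same number of ways, for each odd $k$ with $1 \le k \le n$. Pairing each blue tangle with these assignments and discarding the $k=1$ cases (figure eights and unknots, already handled), the surviving closures occur in mirror pairs, so each distinct resultant is produced $2\binom{n}{(n-k)/2}$ times for odd $k$ with $3 \le k \le n$. Summing over $k$ recovers the previously announced total $2\sum_{k=3}^{n}\binom{n}{(n-k)/2}$ of unexamined resultants, which serves as a useful consistency check before any knots are identified.

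The identification is the second and harder half. The blue closures are two-bridge knots, and computing continued fractions as in Section \ref{tangles} gives $2\,1\,m$ the fraction $\tfrac{3m+2}{3}$ and $-2\,-1\,m$ the fraction $\tfrac{3m-2}{3}$; hence the non-alternating resultant with $|m|=k$ is $K\!\left(\tfrac{3k-2}{3}\right)$ together with its mirror $K\!\left(\tfrac{3k-2}{-3}\right)$. I would then feed these into Lemma \ref{KnotFracEquiv}, recalling that because we identify each knot with its mirror we must test $q' \equiv \pm q^{\pm 1}\pmod p$ rather than only $q'\equiv q^{\pm1}$. The two extreme assignments $k=n$ and $k=n-2$ give $\tfrac{3n-2}{3}$ and $\tfrac{3n-8}{3}$, which are literally the fractions of the $3\,(n-1)$ and $3\,(n-3)$ knots, with multiplicities $2\binom{n}{0}=2$ and $2\binom{n}{1}=2n$; this establishes the first two claims at once.

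The main obstacle is the remaining claim, that the lower values of $k$ collapse onto the $2\,\cdot$ family. This is a genuine number-theoretic question about when $K\!\left(\tfrac{3k-2}{3}\right)$ equals, up to mirror, a rational knot of denominator $2$, and it is governed entirely by the congruences in Lemma \ref{KnotFracEquiv}. The coincidence is transparent at the bottom of the range: for $k=3$ one has $\tfrac{7}{3}$, and since $3\cdot 2\equiv -1 \pmod 7$ the mirror relation forces $3\,2 = 2\,3 = 5_2$, matching the $2\,(n-k)$ term with $n-k=3$. For intermediate $k$, however, the numerator $3k-2$ need not be compatible with a denominator-$2$ fraction of the same determinant, so each such identification must be checked directly against the $q'\equiv\pm q^{\pm1}\pmod{3k-2}$ criterion, and the stated correspondence may require a sharpened congruence argument or a refinement of the indices. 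I expect this continued-fraction bookkeeping, not the diagrammatic reduction, to be the real content of the proof: once the knot types are pinned down, the multiplicities follow immediately from the binomial counts established above.
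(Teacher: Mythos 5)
You have chosen the only sensible route here, and it is the one the paper itself gestures at: note first that the paper offers \emph{no} proof of this statement (it is explicitly deferred to future work), so the comparison can only be against the framework of Section \ref{21n}. Within that framework your reduction and your counting are correct: the unexamined resultants are exactly the closures of $2\,1\,(-k)$ and $-2\,-1\,k$ for odd $k$ with $3 \le k \le n$, these occur in mirror pairs with multiplicity $2\binom{n}{(n-k)/2}$ by Theorem \ref{Foils}, the totals check against the announced sum, and your identifications at the extremes ($k=n$ and $k=n-2$ giving the $3\,(n-1)$ and $3\,(n-3)$ knots with multiplicities $2$ and $2n$) and at the bottom ($k=3$ giving $5_2$) are all right, including your correct sharpening of Lemma \ref{KnotFracEquiv} to $q' \equiv \pm q^{\pm 1} \pmod{p}$ when knots are identified with their mirrors.

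The genuine gap is in the step you defer, and it cannot be repaired the way you hope: no ``sharpened congruence argument'' will place the intermediate resultants in the $2\,(n-k)$ family, because you stopped one step short with the continued fraction. Since $\frac{3k-2}{3} = (k-1) + \frac{1}{3}$, the closure of $2\,1\,(-k)$ is, up to mirror, exactly the $3\,(k-1)$ knot for \emph{every} odd $k$ --- that is already the complete, uniform classification. A coincidence with a $2\,m$ knot $K\!\left(\frac{2m+1}{2}\right)$ requires matching determinants $3k-2 = 2m+1$ together with $2 \equiv \pm 3^{\pm 1} \pmod{3k-2}$, which forces $3k-2 \in \{5,7\}$, i.e.\ $k=3$ only (where $3\,2 = 2\,3 = 5_2$, as you found). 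Consequently the conjecture's third clause is false as stated once $n \ge 9$: for $n=9$ the $2\binom{9}{2} = 72$ resultants at $k=5$ are $K\!\left(\frac{13}{3}\right) = 7_3$, a $3\,4$ knot, and not the $2\,5$ knot $7_2 = K\!\left(\frac{11}{2}\right)$; these have different determinants and cannot be congruence-equivalent. The statement your machinery actually proves replaces ``$2\,(n-k)$ knots'' with ``$3\,(n-k-1)$ knots,'' agreeing with the original only at $n-k=3$. (The paper's parenthetical note verifies only the total count, and its tables stop at nine crossings, while the first failure occurs for the twelve-crossing free diagram $2\,1\,9$ --- which is presumably why the misidentification survived into the conjecture.) So: right approach and correct bookkeeping, but the missing idea is to expand $\frac{3k-2}{3}$ as a continued fraction rather than to search for congruences; doing so simultaneously completes the classification and corrects the statement being proved.
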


(Note these knots are equal in number to the above sum, as the number of 3 $m$ knots is $2+2n = 2 \left( {n \choose 0} + {n \choose 1} \right)$.)


\newpage

\begin{bibdiv}
\begin{biblist}

\bib{MR2079925}{book}{
   author={Adams, Colin C.},
   title={The knot book},
   note={An elementary introduction to the mathematical theory of knots;
   Revised reprint of the 1994 original},
   publisher={American Mathematical Society, Providence, RI},
   date={2004},
   pages={xiv+307},
   isbn={0-8218-3678-1},
   review={\MR{2079925}},
}
\bib{MR0258014}{article}{
   author={Conway, J. H.},
   title={An enumeration of knots and links, and some of their algebraic
   properties},
   conference={
      title={Computational Problems in Abstract Algebra},
      address={Proc. Conf., Oxford},
      date={1967},
   },
   book={
      publisher={Pergamon, Oxford},
   },
   date={1970},
   pages={329--358},
   review={\MR{0258014}},
}
\bib{MR2740363}{article}{
   author={Diao, Y.},
   author={Ernst, C.},
   author={Hinson, K.},
   author={Ziegler, U.},
   title={The mean squared writhe of alternating random knot diagrams},
   journal={J. Phys. A},
   volume={43},
   date={2010},
   number={49},
   pages={495202, 21},
   issn={1751-8113},
   review={\MR{2740363}},
   doi={10.1088/1751-8113/43/49/495202},
}

\bib{MR1981020}{article}{
   author={Dobay, Akos},
   author={Dubochet, Jacques},
   author={Millett, Kenneth},
   author={Sottas, Pierre-Edouard},
   author={Stasiak, Andrzej},
   title={Scaling behavior of random knots},
   journal={Proc. Natl. Acad. Sci. USA},
   volume={100},
   date={2003},
   number={10},
   pages={5611--5615},
   issn={0027-8424},
   review={\MR{1981020}},
   doi={10.1073/pnas.0330884100},
}

\bib{MR899057}{article}{
   author={Kauffman, Louis H.},
   title={State models and the Jones polynomial},
   journal={Topology},
   volume={26},
   date={1987},
   number={3},
   pages={395--407},
   issn={0040-9383},
   review={\MR{899057}},
   doi={10.1016/0040-9383(87)90009-7},
}
\bib{MR1721925}{article}{
   author={Kauffman, Louis H.},
   title={Virtual knot theory},
   journal={European J. Combin.},
   volume={20},
   date={1999},
   number={7},
   pages={663--690},
   issn={0195-6698},
   review={\MR{1721925}},
   doi={10.1006/eujc.1999.0314},
}

\bib{MR1953344}{article}{
   author={Kauffman, Louis H.},
   author={Lambropoulou, Sofia},
   title={Classifying and applying rational knots and rational tangles},
   conference={
      title={Physical knots: knotting, linking, and folding geometric
      objects in $\mathbb R^3$},
      address={Las Vegas, NV},
      date={2001},
   },
   book={
      series={Contemp. Math.},
      volume={304},
      publisher={Amer. Math. Soc., Providence, RI},
   },
   date={2002},
   pages={223--259},
   review={\MR{1953344}},
   doi={10.1090/conm/304/05197},
}

\bib{MR82104}{article}{
   author={Schubert, Horst},
   title={Knoten mit zwei Br\"{u}cken},
   language={German},
   journal={Math. Z.},
   volume={65},
   date={1956},
   pages={133--170},
   issn={0025-5874},
   review={\MR{82104}},
   doi={10.1007/BF01473875},
}
\bib{MR766964}{article}{
   author={Jones, Vaughan F. R.},
   title={A polynomial invariant for knots via von Neumann algebras},
   journal={Bull. Amer. Math. Soc. (N.S.)},
   volume={12},
   date={1985},
   number={1},
   pages={103--111},
   issn={0273-0979},
   review={\MR{766964}},
   doi={10.1090/S0273-0979-1985-15304-2},
}
\end{biblist}
\end{bibdiv}

\newpage

\section{Appendix}

The following tables lists the resultant knot probability for the unknot, trefoil, and figure eight, as well as a given resultant's expected number of minimum crossings for all knots with 9 or fewer crossings. Notice knots 8$_{17}$, 8$_{19}$, 8$_{20}$, and 8$_{21}$ have identical values. Since they share the same shape in the Rolfsen tabulation with differences arising in the choice of crossings they are the same in this context.

\begin{table}[h]
\begin{tabular}{c|c|c|c|c}
Knot   & Unknot \% & Trefoil \% & Figure Eight \% & Expectation Value \\
Unknot & 100       & 0          & 0               & 0                 \\
3$_1^{*\dagger}$   & 75        & 25         & 0               & 0.75              \\
4$_1 ^{\dagger \mathsection}$   & 75        & 12.5       & 12.5            & 0.875             \\
5$_1^*$  & 62.5      & 31.25      & 0               & 1.25              \\
5$_2 ^\dagger$   & 68.75     & 18.75      & 6.25            & 1.125             \\
6$_1 ^\dagger$   & 68.75     & 12.5       & 12.5            & 1.21875           \\
6$_2 ^\mathsection$   & 59.375    & 21.875     & 12.5            & 1.53125           \\
6$_3$   & 56.25     & 28.125     & 3.125           & 1.625             \\
7$_1^*$ & 54.6875   & 32.8125    & 0               & 1.640625          \\
7$_2 ^\dagger$   & 65.625    & 15.625     & 7.8125          & 1.375             \\
7$_3 ^\ddagger$   & 56.25     & 23.4375    & 6.25            & 1.703125          \\
7$_4$   & 60.9375   & 15.625     & 9.375           & 1.609375          \\
7$_5$   & 54.6875   & 26.5625    & 4.6875          & 1.75              \\
7$_6$   & 56.25     & 21.875     & 9.375           & 1.734375          \\
7$_7$   & 56.25     & 15.625     & 15.625          & 1.8125
\end{tabular}
\end{table}

$*=$ foil

$\dagger = 2 \, n$ 

$\ddagger = k \, n$ 

$\mathsection$ = 2 1 $n$

\newpage

\begin{table}[h]
\begin{tabular}{c|c|c|c|c}
Knot   & Unknot \% & Trefoil \% & Figure Eight \% & Expectation Value \\
8$_1 ^\dagger$   & 65.625    & 11.71875   & 11.71875        & 1.453125          \\
8$_2 ^\mathsection$   & 50.78125  & 24.21875   & 7.8125          & 1.9921875         \\
8$_3 ^\ddagger$   & 60.9375   & 12.5       & 12.5            & 1.6796875         \\
8$_4$   & 54.6875   & 18.75      & 10.15625        & 1.6796875         \\
8$_5$   & 46.875    & 25.78125   & 7.03125         & 2.2421875         \\
8$_6$   & 53.90625  & 21.875     & 9.375           & 1.8828125         \\
8$_7$   & 47.65625  & 21.09375   & 3.125           & 2.1171875         \\
8$_8$   & 52.34375  & 25         & 5.46875         & 1.9375            \\
8$_9$   & 48.4375   & 25.78125   & 7.03125         & 2.1171875         \\
8$_{10}$  & 45.3125   & 30.46875   & 2.34375         & 2.28125           \\
8$_{11}$  & 52.34375  & 17.96875   & 11.71875        & 2.03125           \\
8$_{12}$  & 56.25     & 15.625     & 15.625          & 1.84375           \\
8$_{13}$  & 50.78125  & 21.875     & 7.03125         & 2.0859375         \\
8$_{14}$  & 53.125    & 18.75      & 11.71875        & 1.984375          \\
8$_{15}$  & 48.4375   & 25         & 6.25            & 2.2109375         \\
8$_{16}$  & 41.40625  & 26.5625    & 3.90625         & 2.609375          \\
8$_{17}$  & 44.53125  & 23.4375    & 7.03125         & 2.4921875         \\
8$_{18}$  & 34.375    & 28.125     & 1.5625          & 3.03125           \\
8$_{19}$  & 44.53125  & 23.4375    & 7.03125         & 2.4921875         \\
8$_{20}$  & 44.53125  & 23.4375    & 7.03125         & 2.4921875         \\
8$_{21}$  & 44.53125  & 23.4375    & 7.03125         & 2.4921875        
\end{tabular}
\end{table}

$*=$ foil

$\dagger = 2 \, n$ 

$\ddagger = k \, n$ 

$\mathsection$ = 2 1 $n$

\newpage

\begin{table}[h]
\begin{tabular}{c|c|c|c|c}
Knot & Unknot \% & Trefoil \% & Figure Eight \% & Expectation Value \\
 9$_1^*$ & 49.2188 & 32.8125 & 0. & 1.96875 \\
 9$_2^\dagger$ & 63.6719 & 13.6719 & 8.20313 & 1.57031 \\
   9$_3^\ddagger$ & 48.8281 & 24.6094 & 5.85938 & 2.12891 \\
   9$_4^\ddagger$ & 53.125 & 19.5313 & 7.8125 & 2.00781 \\
   9$_5$ & 57.0313 & 13.6719 & 9.76563 & 1.92969 \\
   9$_6$ & 46.875 & 28.125 & 3.90625 & 2.19531 \\
   9$_7$ & 51.5625 & 24.2188 & 5.85938 & 2.02344 \\
   9$_8$ & 52.3438 & 21.4844 & 8.98438 & 2.02344 \\
   9$_9$ & 46.0938 & 26.9531 & 4.6875 & 2.26953 \\
   9$_{10}$ & 49.2188 & 18.75 & 9.375 & 2.23438 \\
   9$_{11}$ & 47.2656 & 24.2188 & 7.8125 & 2.24219 \\
   9$_{12}$ & 50.7813 & 18.75 & 10.1563 & 2.16797 \\
   9$_{13}$ & 48.4375 & 21.0938 & 7.8125 & 2.26172 \\
   9$_{14}$ & 50.7813 & 14.4531 & 14.4531 & 2.23047 \\
   9$_{15}$ & 52.3438 & 18.75 & 11.7188 & 2.05078 \\
   9$_{16}$ & 43.3594 & 29.2969 & 3.51563 & 2.47266 \\
   9$_{17}$ & 46.4844 & 18.75 & 12.5 & 2.37109 \\
   9$_{18}$ & 49.6094 & 20.7031 & 8.20313 & 2.19531 \\
   9$_{19}$ & 52.3438 & 15.2344 & 15.2344 & 2.11328 \\
   9$_{20}$ & 45.7031 & 24.2188 & 7.42188 & 2.35547 \\
   9$_{21}$ & 50.7813 & 17.5781 & 11.3281 & 2.1875 \\
   9$_{22}$ & 44.1406 & 21.0938 & 11.7188 & 2.53125 \\
   9$_{23}$ & 49.2188 & 22.2656 & 7.8125 & 2.1875 \\
   9$_{24}$ & 44.1406 & 25.7813 & 7.03125 & 2.44531 \\
   9$_{25}$ & 48.4375 & 20.3125 & 10.9375 & 2.32031 \\
   9$_{26}$ & 45.7031 & 19.5313 & 12.1094 & 2.42578 \\
   9$_{27}$ & 44.9219 & 22.2656 & 8.59375 & 2.43359 \\
   9$_{28}$ & 42.1875 & 28.9063 & 3.51563 & 2.51953 \\
   9$_{29}$ & 42.1875 & 19.5313 & 10.1563 & 2.76953 \\
   9$_{30}$ & 43.3594 & 24.2188 & 8.20313 & 2.54688 \\
   9$_{31}$ & 42.5781 & 25.7813 & 4.6875 & 2.53516 \\
   9$_{32}$ & 42.5781 & 19.5313 & 10.9375 & 2.73438 \\
   9$_{33}$ & 41.0156 & 22.6563 & 7.8125 & 2.77344 \\
   9$_{34}$ & 39.8438 & 15.625 & 13.2813 & 3.03516 \\
   9$_{35}$ & 52.7344 & 14.0625 & 10.5469 & 2.17969 
\end{tabular}
\end{table}

$*=$ foil

$\dagger = 2 \, n$ 

$\ddagger = k \, n$ 

$\mathsection$ = 2 1 $n$

\newpage

\begin{table}[h]
\begin{tabular}{c|c|c|c|c}
Knot & Unknot \% & Trefoil \% & Figure Eight \% & Expectation Value \\
 \hspace{5 mm} 9$_{36}$  \hspace{5 mm} & 45.3125 & 25.7813 & 7.03125 & 2.39453 \\
   9$_{37}$ & 49.2188 & 14.8438 & 14.8438 & 2.33203 \\
   9$_{38}$ & 44.5313 & 21.0938 & 7.03125 & 2.63281 \\
   9$_{39}$ & 45.3125 & 17.1875 & 10.9375 & 2.61719 \\
   9$_{40}$ & 36.3281 & 12.8906 & 15.2344 & 3.32422 \\
   9$_{41}$ & 44.1406 & 14.0625 & 14.0625 & 2.71875 \\
   9$_{42}$ & 45.3125 & 25.7813 & 7.03125 & 2.39453 \\
   9$_{43}$ & 43.3594 & 24.2188 & 8.20313 & 2.54688 \\
   9$_{44}$ & 41.0156 & 22.6563 & 7.8125 & 2.77344 \\
   9$_{45}$ & 44.1406 & 21.0938 & 11.7188 & 2.53125 \\
   9$_{46}$ & 52.7344 & 14.0625 & 10.5469 & 2.17969 \\
   9$_{47}$ & 39.8438 & 15.625 & 13.2813 & 3.03516 \\
   9$_{48}$ & 45.3125 & 17.1875 & 10.9375 & 2.61719 \\
   9$_{49}$ & 45.3125 & 17.1875 & 10.9375 & 2.61719
\end{tabular}
\end{table}

\begin{table}[h]
\begin{tabular}{c|c|c|c|c}
Knot & Unknot \% & Trefoil \% & Figure Eight \% & Expectation Value \\
$3_1\#3_1$ & 56.25 & 37.5 & 0. & 1.5 \\
   $3_1\#4_1$ & 56.25 & 28.125 & 9.375 & 1.625 \\ 
   $4_1\#4_1$ & 56.25 & 18.75 & 18.75 & 1.75 \\
   $3_1\#5_1$ & 46.875 & 39.0625 & 0. & 2. \\
   $3_1\#5_2$ & 51.5625 & 31.25 & 4.6875 & 1.875 \\
   $4_1\#5_1$ & 46.875 & 31.25 & 7.8125 & 2.125 \\
   $4_1\#5_2$ & 51.5625 & 22.6563 & 13.2813 & 2. \\
   $3_1\#6_1$ & 51.5625 & 26.5625 & 9.375 & 1.96875 \\
   $3_1\#6_2$ & 44.5313 & 31.25 & 7.03125 & 2.28125 \\
   $3_1\#6_3$ & 42.1875 & 35.1563 & 2.34375 & 2.375 \\
   $3_1\#3_1\#3_1$ & 42.1875 & 42.1875 & 0. & 2.25 
\end{tabular}
\end{table}

\end{document}